\newcommand{\bl}[1]{\textcolor{blue}{#1}}
\newcommand{\red}[1]{\textcolor{red}{#1}}
\newcommand{\gr}[1]{\textcolor{green}{#1}}
\definecolor{mypurple}{rgb}{.4,.0,.5}
\def\w{{\bf w}}
\def\y{{\bf y}}
\def\x{{\bf x}}
\def\x{{\mathbf x}}
\def\w{{\bf w}}
\def\x{{\bf x}}
\def\y{{\bf y}}
\def\h{{\bf h}}
\def\be{\begin{equation}}
\def\ee{\end{equation}}
\def\ba{\left[\begin{array}}
\def\ea{\end{array}\right]}
\def\w{{\bf w}}
\def\x{{\bf x}}
\def\y{{\bf y}}
\def\1{{\bf 1}}
\def\g{{\bf g}}
\def\0{{\bf 0}}
\def\erfinv{\mbox{erfinv}}
\def\erf{\mbox{erf}}
\def\erfc{\mbox{erfc}}
\def\erfinv{\mbox{erfinv}}
\def\Sw{S_w}
\def\hw{\bar{\h}}
\def\Sw{S_w}
\def\mR{{\mathbb R}}
\def\psiint{\Psi_{int}}
\def\psiext{\Psi_{ext}}
\def\psicom{\Psi_{com}}
\def\psinet{\Psi_{net}}
\def\psiintnon{\Psi_{int}^{+}}
\def\psiextnon{\Psi_{ext}^{+}}
\def\psicomnon{\Psi_{com}^{+}}
\def\psinetnon{\Psi_{net}^{+}}
\def\lp{\left (}
\def\rp{\right )}
\newtheorem{theorem}{Theorem}
\begin{document}

\begin{singlespace}

\title {Random linear systems with sparse solutions -- asymptotics and large deviations 
}
\author{
\textsc{Mihailo Stojnic \footnote
{e-mail: {\tt flatoyer@gmail.com}} }}
\date{}
\maketitle

\centerline{{\bf Abstract}} \vspace*{0.1in}

In this paper we revisit random linear under-determined systems with sparse solutions. We consider $\ell_1$ optimization heuristic known to work very well when used to solve these systems. A collection of fundamental results that relate to its performance analysis in a statistical scenario is presented. We start things off by recalling on now classical phase transition (PT) results that we derived in \cite{StojnicCSetam09,StojnicUpper10}. As these represent the so-called breaking point characterizations, we now complement them by analyzing the behavior in a zone around the breaking points in a sense typically used in the study of the large deviation properties (LDP) in the classical probability theory. After providing a conceptual solution to these problems we attack them on a ``hardcore" mathematical level attempting/hoping to be able to obtain explicit solutions as elegant as those we obtained in \cite{StojnicCSetam09,StojnicUpper10} (this time around though, the final characterizations were to be expected to be way more involved than in \cite{StojnicCSetam09,StojnicUpper10}; simply, the ultimate goals are set much higher and their achieving would provide a much richer collection of information about the $\ell_1$'s behavior). Perhaps surprisingly, the final LDP $\ell_1$ characterizations that we obtain happen to match the elegance of the corresponding PT ones from \cite{StojnicCSetam09,StojnicUpper10}. Moreover, as we have done in \cite{StojnicEquiv10}, here we also present a corresponding LDP set of results that can be obtained through an alternative high-dimensional geometry approach. Finally, we also prove that the two types of characterizations, obtained through two substantially different mathematical approaches, match as one would hope that they do.

\vspace*{0.25in} \noindent {\bf Index Terms: Linear systems of equations; sparse solutions;
$\ell_1$-heuristic; large deviations}.

\end{singlespace}

\section{Introduction}
\label{sec:back}

In this paper we will study some of the fundamental properties of random systems of linear equations. These problems are well known and have been the subject of extensive mathematical studies over last several decades. Consequently, quite a few of their nice mathematical features have been discovered and explained. Discussing all of these we leave for a review type of paper and instead focus on those of interest for the line of studying that we will pursue here. Along the same lines, we will introduce many mathematical objects/facts that we will rely on without too much detailing essentially assuming a high degree of familiarity with a somewhat lengthy line of work initiated in \cite{StojnicCSetamBlock09,StojnicUpperBlock10,StojnicICASSP09block,StojnicJSTSP09} and continued in large collection of our papers that followed in their footsteps. As usual, we will try to maintain as much consistency with \cite{StojnicCSetamBlock09,StojnicUpperBlock10,StojnicICASSP09block,StojnicJSTSP09} as possible hoping that even less experienced readers will enjoy a smooth transition from these earlier works and the topics that we will discuss here.

To put everything on the right mathematical track, we start with the standard description of the linear systems. Let $A$ be an $m\times n$ ($m\leq n$) dimensional matrix (we may often call $A$ throughout the paper the system matrix). Further, assume that its entries are real numbers and let $\tilde{\x}$ be an $n$ dimensional vector that also has real entries (for short we say $A\in \mR^{m\times n}$ and $\tilde{\x}\in \mR^{n}$). Additionally, we will call $\tilde{\x}$ $k$-sparse if it has no more than $k$ nonzero entries. Then the standard linear system is formed through the product of $A$ and $\tilde{\x}$. Let this product be $\y$ and we write
\begin{equation}
\y=A\tilde{\x}. \label{eq:defy}
\end{equation}
Now, the standard linear system problem is to in fact determine $\tilde{\x}$ if $A$ and $\y$ in (\ref{eq:defy}) are given. There is one thing that one should emphasize though. Namely, by the formation of $\y$ it is clear that such an $\tilde{\x}$ exists. What is way less clear is that such an $\tilde{\x}$ may not be unique, i.e. there may be more than one $\tilde{\x}$ for which (\ref{eq:defy}) holds for given $A$ and $\y$. This will not be possible if $m\leq n$ and $A$ is full rank. On the other hand, if $A$ is indeed full rank but $m<n$ (in such a case we of course call the systems under-determined) this indeed becomes possible. Precisely this under-determined full-rank system will be one of the main topics of this paper. Still, we will through structuring $\tilde{\x}$ in a way ensure that the solution is almost always unique. The type of the structure that we will assume is the called sparsity of $\tilde{\x}$.  Mathematically, we will look at a structured variant of (\ref{eq:defy}) and ask for the $k$ sparse solution of
\begin{equation}
A\x=\y, \label{eq:system}
\end{equation}
knowing of course (based on (\ref{eq:defy})) that such a solution exists. It is an easy algebraic exercise to show that for $k<m/2$ the solution of (\ref{eq:system}) is in fact unique; moreover, we will also additionally assume that there is no $\x$ that satisfies (\ref{eq:system}) that is less than $k$ sparse. Assuming then all of this, we will often instead consider then the following problem
\begin{eqnarray}
\mbox{min} & & \|\x\|_{0}\nonumber \\
\mbox{subject to} & & A\x=\y, \label{eq:l0}
\end{eqnarray}
where $\|\x\|_{0}$ is what is typically called $\ell_0$ (quasi) norm of vector $\x$. We will simply view $\|\x\|_{0}$ as the number of the nonzero entries of $\x$.


Finding the sparsest $\x$ in (\ref{eq:l0}) (which we will technically call solving (\ref{eq:l0})) is of course typically considered as a not very easy task. There may be many reasons for that from the numerical linear algebra point of view. However, from the point of view that we will adopt the main reason is the numerical complexity of solving (\ref{eq:l0}). Clearly, if one is just interested in solving (\ref{eq:l0}) (i.e. if one doesn't really care how numerically complex such a solving is) then an exhaustive search of all subsets of $k$ columns of $A$ would solve the problem (simply extracting such subsets and solving the resulting over-determined systems would do it; we will always throughout the paper assume $m\geq k$). However, if one assumes the so-called linear regime (which eventually we will in this paper) where $k=\beta n$ and $m=\alpha n$ and $n$ is large and $\alpha$ and $\beta$ are constant independent of $n$ then there is an exponentially large number (in $n$ of course) of $k$ column subsets of $A$. Here, such a complexity will be considered as too high and will focus instead on algorithms/heuristics of polynomial complexity. Even with such a restriction there are quite a few fairly successful algorithms developed over last several decades (see, e.g. \cite{JATGomp,NeVe07,DTDSomp,NT08,DaiMil08,DonMalMon09}) that one could utilize. As the most important and as, mathematically speaking, the best currently known, we view the following $\ell_1$-optimization relaxation of (\ref{eq:l0})
\begin{eqnarray}
\mbox{min} & & \|\x\|_{1}\nonumber \\
\mbox{subject to} & & A\x=\y. \label{eq:l1}
\end{eqnarray}
Of course, initially the reason for its importance/popularity is its polynomial complexity and the fact that it effectively amounts to solving a linear program - task typically considered among the easiest in the theory of the (continuous) optimization algorithms. Furthermore, the implementation of (\ref{eq:l1}) itself is fairly universal as it requires no other knowledge beyond $A$ and $\y$ and it can be used/run (possibly with higher or lower success rate) with basically any full-rank matrix $A$.

Complementary to this are of course the excellent performance characteristics of (\ref{eq:l1}). Performance characterizations started in \cite{CRT,DOnoho06CS} and perfected in \cite{DonohoPol,DonohoUnsigned,StojnicCSetam09,StojnicUpper10} mathematically solidified the importance of (\ref{eq:l1}) in studying the linear under-determined systems with structured solutions. Motivated by the success we initially achieved in \cite{StojnicCSetam09,StojnicUpper10} in this paper we will substantially deepen our understanding of the performance characterization of (\ref{eq:l1}). Namely, \cite{DonohoPol,DonohoUnsigned,StojnicCSetam09,StojnicUpper10} uncovered that the (\ref{eq:l1}) exhibits the so-called phase-transition phenomenon when utilized in statistical contexts. Moreover, both sets of results, \cite{DonohoPol,DonohoUnsigned} and \cite{StojnicCSetam09,StojnicUpper10}, in addition to uncovering the existence of the phase transition phenomenon precisely characterized the so-called ``breaking points" where these phase transitions happen (essentially the highest possible $\beta$ for which the solution of (\ref{eq:l1}) with overwhelming probability matches the sparsest solution of (\ref{eq:l0}) for a fixed $\alpha$; under overwhelming probability we will in this papers consider probability over statistics of $A$ that is no more than a number exponentially decaying in $n$ away from $1$). Here, we will make a substantial progress in studying further the phase transitions. We will essentially connect them to the so-called \emph{large deviations property/principle} (LDP) from the classical probability theory and provide their explicit characterizations when viewed through such a prism. We will do so for two types of structured $\x$, namely for $k$ sparse $\x$ that we will sometimes refer to as the regular/general $k$ sparse $\x$ and for the so-called a priori known to be nonnegative k sparse $\x$ that we will often refer to as the positive/nonnegative $\x$. Furthermore, we will do so through two seemingly different approaches, the novel, more modern one which is purely probabilistic and is fully developed by us and the more classical one that is again fully developed by us but also uses as starting blocks some known facts from the high-dimensional geometry.

We will split the presentation into several parts. We will start things off by recalling on the basics of the phase transitions and on the known results that relate to them in the context of interest in this paper. We will then connect them to the LDP and then study the LDPs in a great detail (first for the general $\x$ and then for the positive $\x$) through a purely probabilistic approach mentioned above and initiated in a line of work that we started with \cite{StojnicCSetam09,StojnicUpper10}. In the later sections of the paper we will then switch to the high-dimensional geometry aspects of these problems and prove that through them one can obtained exactly the same characterizations. The main emphasis will be on the elegance of the final results that in our view matches the corresponding one that we have achieved initially in \cite{StojnicCSetam09,StojnicUpper10} and in a large set of results that we created later on.

\section{Phase transitions}
\label{sec:phasetrans}

We start by recalling on the phase transition (PT) phenomena that occur in statistical studies of many random structures. There is of course a long history of studying these phenomena in various aspects of optimization theory and algorithms. Instead of discussing all of them in detail we will focus on what is known about them in the context that is of interest here (as will soon be clear below, even explaining that in full mathematical detail will take some time and effort). As these phenomena are introduced in a bit more subtle way and in several different scenarios of interest in studying (\ref{eq:l1}) we will now make the above informal definition a bit more precise; however, we do emphasize as earlier, that although we will make a substantial effort to make all the definitions self-contained and fully precise we may on occasion deviate from this and rely on a familiarity with some of the known concepts (if such a scenario presents itself, we recommend that the reader consults our earlier works that contain all the necessary details).

As a measure of the above mentioned performance excellence of (\ref{eq:l1}) one typically takes (and we will do the same in this paper) the highest possible $\beta$ for which the solution of (\ref{eq:l1}) matches the sparsest solution of (\ref{eq:l0}) for a fixed $\alpha$. Along the same lines, for an algorithm that exhibits the so-called phase transition phenomenon, for any given constant $\alpha\leq 1$ there is a maximum
allowable value of $\beta$ such that for \emph{any} given $k$-sparse $\x$ in (\ref{eq:system}) the solution that the algorithm produces
is with overwhelming probability exactly that given $k$-sparse $\x$. This value of
$\beta$ is typically referred to as the \emph{strong threshold} (see
\cite{DonohoPol,StojnicCSetam09}) and we also say that the algorithm exhibits the \emph{strong} phase transition, i.e. the \emph{strong} PT. Informally speaking, the threshold values are essentially the breaking points where the algorithms (in our case here (\ref{eq:l1})) exhibit the phase transition phenomenon. In a more mathematical language, the phase transition phenomenon essentially means that if the problem dimensions are such that the pair $(\alpha,\beta)$ is below the so called phase transition curve (i.e. the PT curve) then the algorithm (here (\ref{eq:l1})) solves (in a probabilistic sense) the problem (here (\ref{eq:system}) or (\ref{eq:l0})); otherwise it fails. A full asymptotic performance characterization of an algorithm that exhibits the phase transition phenomenon assumes determining this phase transition curve.

When viewed from a practical point of view, the above requirement may sometimes be a bit restrictive. Instead, one may choose to characterize performance in a bit less restrictive way hoping to capture a bit more typical performance. A way to do so that we found as a fairly useful relaxes the \emph{any} requirement in the following way: for any given constant
$\alpha\leq 1$ and \emph{any} given $\x$ with a given fixed location and a given fixed set of signs
there will be a maximum allowable value of $\beta$ such that
(\ref{eq:l1}) finds that given $\x$ in (\ref{eq:system}) with overwhelming
probability. We will refer to this maximum allowable value of
$\beta$ as the \emph{weak threshold} and will denote it by $\beta_{w}$ (see, e.g. \cite{StojnicICASSP09,StojnicCSetam09}). Correspondingly, we also say that the algorithm exhibits the \emph{weak} phase transition (i.e. the \emph{weak} PT) and we call the resulting curve the \emph{weak} phase transition curve. The rationale behind the introduction of the weak PT is that if one needs (\ref{eq:system}) solved, it typically wants it solved for an $\x$ or for a set of $\x$ but quite likely not for every single $\x$. Whenever this happens to be the case the above weak phase transition curve is highly likely to be a more suitable and useful type of performance characterization. One can then proceed further along these lines and define various other types of phase transitions (or thresholds for $\beta$) depending on the scenarios where the algorithms are used and what kind of performance one is interested in. Since our main concern in this paper will be studying of the weak PT, we stop short of going into further details regarding other types of phase transitions and instead just mention in passing that another interesting and popular concept is the so-called \emph{sectional} phase transition (i.e. the \emph{sectional} PT) introduced in \cite{DonohoPol} and studied into the greatest of the details in \cite{DonohoPol,StojnicCSetam09,StojnicUpper10,StojnicUpperSec13,StojnicLiftStrSec13}. As a side note, we also add/emphasize that studying different types of phase transitions may often pose different challenges and more often than not studying some of them may be much harder/easier than studying others. This is precisely what happens with the $\ell_1$'s PTs, where for example studying the strong and sectional PTs turned out to be much, much harder than studying the weak ones and the results that we established in those directions in \cite{StojnicLiftStrSec13} have been standing for a while now as the benchmarks very hard to approach or beat.

There is of course a large body of work that deals with various aspects of the PTs that we introduced above. We will here just briefly single out the works that in our view stand as the most important and relevant to what we will showcase in the later sections of the paper. Initial performance characterizations done in \cite{CRT,DOnoho06CS} determined in a statistical scenario that for any $\alpha$ there is $\beta$ such that the solutions of (\ref{eq:l0}) and (\ref{eq:l1}) coincide (from this point on, we will under the solution of (\ref{eq:l0}) consider its sparsest solution). While we believe that \cite{CRT,DOnoho06CS} take a special place in the history of studying (\ref{eq:l1}) (in particular for their ability to generate a substantial portion of the interest in linear systems over the last decade) they fell a bit short in fully uncovering and characterizing the PT phenomenon. This was eventually done in \cite{DonohoPol,DonohoUnsigned,StojnicCSetam09,StojnicUpper10}. \cite{DonohoPol,DonohoUnsigned} connected the (\ref{eq:l1})'s PT properties to the studying of neighbourly polytopes in high-dimensional geometry and in return utilized a powerful machinery developed overthere to fully characterize the (\ref{eq:l1})'s PT. On the other hand in our own series of work \cite{StojnicCSetam09,StojnicUpper10}, we developed a novel purely probabilistic approach that also turned out to be very powerful generic probability tool. The results that we obtained in \cite{StojnicCSetam09,StojnicUpper10} of course fully characterized the ultimate (\ref{eq:l1})'s PTs as well. Of particular importance though, we view the simplicity of the concepts that we have developed in \cite{StojnicCSetam09,StojnicUpper10} and the ultimate elegance that we were able to achieve by using them in characterizing the PTs. We below recall on a theorem that essentially summarizes the results obtained in \cite{StojnicCSetam09,StojnicUpper10} and effectively establishes for any $0<\alpha\leq 1$ the exact value of $\beta_w$ for which (\ref{eq:l1}) finds the $k$-sparse $\x$ from (\ref{eq:system}).


\begin{theorem}(\cite{StojnicCSetam09,StojnicUpper10} Exact $\ell_1$'s weak threshold/PT)
Let $A$ be an $m\times n$ matrix in (\ref{eq:system})
with i.i.d. standard normal components. Let
the unknown $\x$ in (\ref{eq:system}) be $k$-sparse. Further, let the location and signs of nonzero elements of $\x$ be arbitrarily chosen but fixed.
Let $k,m,n$ be large
and let $\alpha_w=\frac{m}{n}$ and $\beta_w=\frac{k}{n}$ be constants
independent of $m$ and $n$. Let $\erfinv$ be the inverse of the standard error function associated with zero-mean unit variance Gaussian random variable.  Further, let $\alpha_w$ and $\beta_w$ satisfy the following \textbf{fundamental characterization of the $\ell_1$'s PT}

\begin{center}
\shadowbox{$
\xi_{\alpha_{w}}(\beta_w)\triangleq\psi_{\beta_w}(\alpha_{w})\triangleq
(1-\beta_w)\frac{\sqrt{\frac{2}{\pi}}e^{-\lp\erfinv\lp\frac{1-\alpha_w}{1-\beta_w}\rp\rp^2}}{\alpha_w\sqrt{2}\erfinv \lp\frac{1-\alpha_w}{1-\beta_w}\rp}=1.
$}
-\vspace{-.5in}\begin{equation}
\label{eq:thmweaktheta2}
\end{equation}
\end{center}

Then:
\begin{enumerate}
\item If $\alpha>\alpha_w$ then with overwhelming probability the solution of (\ref{eq:l1}) is the $k$-sparse $\x$ from (\ref{eq:system}).
\item If $\alpha<\alpha_w$ then with overwhelming probability there will be a $k$-sparse $\x$ (from a set of $\x$'s with fixed locations and signs of nonzero components) that satisfies (\ref{eq:system}) and is \textbf{not} the solution of (\ref{eq:l1}).
    \end{enumerate}
\label{thm:thmweakthr}
\end{theorem}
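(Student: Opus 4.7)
The plan is to reduce the question of $\ell_1$ recovery of the fixed $k$-sparse $\x$ to a purely geometric statement about the null space $\mathcal{N}(A)$ of the system matrix, and then to exploit the rotational invariance of the Gaussian ensemble together with a Gordon-type concentration argument to pin down when that geometric event holds with overwhelming probability. Concretely, writing $K$ for the fixed support of $\x$ and $\sigma \in \{\pm 1\}^{|K|}$ for the prescribed sign pattern on $K$, the standard KKT / subgradient calculus for $\ell_1$ says $\x$ is the unique minimizer of (\ref{eq:l1}) if and only if
\begin{equation*}
\sum_{i \in K} \sigma_i \w_i \;<\; \sum_{i \notin K} |\w_i| \qquad \text{for every nonzero } \w \in \mathcal{N}(A).
\end{equation*}
So the first step is to record this null-space characterization cleanly, normalize $\w$ to the unit sphere $S^{n-1}$, and rewrite the event as ``the uniformly random $(n-m)$-dimensional subspace $\mathcal{N}(A)$ misses the spherical set $\mathcal{S} := \{\w \in S^{n-1} : \sum_{i\in K}\sigma_i \w_i \geq \sum_{i\notin K}|\w_i|\}$.''

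The second step is the probabilistic heart of the argument: apply Gordon's escape-through-a-mesh inequality (or, equivalently, the min-max Gaussian process comparison of Gordon). Because $A$ has i.i.d. $\N(0,1)$ entries, $\mathcal{N}(A)$ is Haar-uniform on the Grassmannian, and Gordon's theorem says that the event $\mathcal{N}(A)\cap\mathcal{S}=\emptyset$ holds with overwhelming probability provided the Gaussian mean width of the negative of the set exceeds $\sqrt{m}$ (modulo lower-order terms), and fails with overwhelming probability otherwise. So the threshold condition becomes exactly the equality
\begin{equation*}
w(\mathcal{S})^2 \;=\; m, \qquad w(\mathcal{S}) \;=\; \mathbb{E}\,\max_{\w\in\mathcal{S}}\, \g^\top \w,\qquad \g\sim \N(\0,I_n).
\end{equation*}
This is the step where both directions of the theorem (the positive recovery and the matching negative/converse) must be carried out; the converse uses the lower bound side of the Gordon comparison inequality, which is what \cite{StojnicCSetam09,StojnicUpper10} developed in order to get tight, not merely one-sided, phase-transition bounds.

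The third step is the analytic evaluation of $w(\mathcal{S})$ in the linear regime $k=\beta_w n$, $m=\alpha_w n$. By duality on $S^{n-1}$ and a Lagrangian on the linear constraint $\sum_{i\in K}\sigma_i \w_i \geq \sum_{i\notin K}|\w_i|$, the computation reduces to a one-dimensional optimization over a scalar parameter $\theta\geq 0$: one obtains, after centering and computing the expectations of $(|g|-\theta)_+$ and $(g\sigma_i-\theta)_+^2$ in closed form, an expression of the type
\begin{equation*}
\frac{1}{n} w(\mathcal{S})^2 \;=\; \min_{\theta\geq 0}\Big\{ \beta_w(1+\theta^2) + (1-\beta_w)\,\mathbb{E}\,(|g|-\theta)_+^2 \Big\}.
\end{equation*}
Setting $\theta=\sqrt{2}\,\erfinv\!\big(\tfrac{1-\alpha_w}{1-\beta_w}\big)$ at the stationary point (which is precisely the condition that matches Gaussian tail mass with the available ``budget'' $(1-\alpha_w)/(1-\beta_w)$) and simplifying the Gaussian integrals yields, after algebraic rearrangement, exactly the boxed relation $\xi_{\alpha_w}(\beta_w)=1$ in (\ref{eq:thmweaktheta2}).

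The main obstacle I expect is the third step: the one-dimensional optimization is elementary but the algebraic simplification that collapses the stationarity equation together with the optimality equation into the remarkably clean erfinv-based form is delicate, and care is needed to match the scalings and to show that the critical $\theta$ is interior. A secondary but nontrivial obstacle is making sure the Gordon bound really gives \emph{both} directions with overwhelming probability so that the threshold is sharp rather than merely an upper or lower estimate; this is the reason the proof has to track concentration of both the primal and the dual Gaussian processes, which is where the novel probabilistic machinery of \cite{StojnicCSetam09,StojnicUpper10} does the heavy lifting.
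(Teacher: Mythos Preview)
Your proposal is correct and follows essentially the same approach as the paper (or, more precisely, as the works \cite{StojnicCSetam09,StojnicUpper10} that the paper cites for this result). The paper's own ``proof'' of Theorem~\ref{thm:thmweakthr} is simply a citation, but the machinery you outline---the null-space characterization (which is exactly Theorem~\ref{thm:thmgenweak} in the paper), the Gordon-type comparison to reduce to a Gaussian width of the set $S_w$, and the one-dimensional Lagrangian optimization yielding $w(\h,S_w)=\min_{\nu\ge 0}\sqrt{\sum_{i\le n-k}\max(|\h_i|-\nu,0)^2+\sum_{i>n-k}(\h_i+\nu)^2}$---is precisely what the paper itself redeploys in (\ref{eq:ldpprob3})--(\ref{eq:ldpwhSw}) for the LDP analysis, and your stationary choice $\theta=\sqrt{2}\,\erfinv\!\big(\tfrac{1-\alpha_w}{1-\beta_w}\big)$ matches (\ref{eq:selvalc3nuA02}).
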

\begin{proof}
The first part was established in \cite{StojnicCSetam09} and the second one was established in \cite{StojnicUpper10}. An alternative way of establishing the same set of results was also presented in \cite{StojnicEquiv10}. Of course, similar results were obtained in \cite{DonohoPol,DonohoUnsigned}. Moreover, a different class of algorithms based on message passing introduced in \cite{DonMalMon09} was proven in \cite{BayMon10} to possess the above phase transition as well.
\end{proof}
\subsection{Properties of $\xi_\alpha(\beta)$ and $\psi_\beta(\alpha)$}
\label{sec:propxi}

In this subsection we will briefly look at a couple of key properties of functions $\xi_\alpha(\beta)$ and $\psi_\beta(\alpha)$ from Theorem \ref{thm:thmweakthr}. We do mention right here at the beginning that these are by now well known and fairly straightforward but for the mathematical exactness and completeness we find it convenient to have them neatly presented so that we may eventually recall on them in a more easier fashion.

\subsubsection{$\xi_\alpha(\beta)$}
\label{sec:propxi1}

The key observation regarding $\xi_\alpha(\beta)$ is that for any fixed $\alpha\in (0,1)$ there is a unique $\beta$ such that $\xi_\alpha(\beta)=1$. This essentially ensures that (\ref{eq:thmweaktheta2}) is an unambiguous PT characterization. To confirm that this is indeed true we make the following observations:

\underline{\emph{1) For any fixed $\alpha\in (0,1)$, $\xi_\alpha(\beta)-1$ is a decreasing function of $\beta$ on interval $[0,\alpha)$.
}}

To see this we proceed in the following straightforward way
\begin{eqnarray}\label{eq:propxi1}
  \frac{d(\xi_\alpha(\beta)-1)}{d\beta} & = & \frac{d\lp(1-\beta)\frac{\sqrt{\frac{2}{\pi}}e^{-\lp\erfinv\lp\frac{1-\alpha}{1-\beta}\rp\rp^2}}{\alpha\sqrt{2}\erfinv \lp\frac{1-\alpha}{1-\beta}\rp}-1\rp}{d\beta}\nonumber\\
  & = & \sqrt{\frac{2}{\pi}}\frac{-\frac{\sqrt{\pi} (1-\alpha)}{(1-\beta) \erfinv((1-\alpha)/(1-\beta))^2}-\frac{2 e^{-\lp \erfinv\lp\frac{1-\alpha}{1-\beta}\rp\rp^2}}{\erfinv((1-\alpha)/(1-\beta))}-\frac{2\sqrt{\pi} (1-\alpha)}{1-\beta}}{2 \sqrt{2} \alpha}\nonumber \\
  &< & 0.
\end{eqnarray}

\underline{\emph{2) For any fixed $\alpha\in (0,1)$, $\lim_{\beta\rightarrow \alpha}\xi_\alpha(\beta)-1=-1$.}}

This easily follows after one observes that
\begin{equation}\label{eq:propxi2}
  \lim_{\beta\rightarrow \alpha}\lp\erfinv\lp\frac{1-\alpha}{1-\beta}\rp\rp=\infty
\end{equation}

\underline{\emph{3) For any fixed $\alpha\in (0,1)$, $\xi_\alpha(0)-1>0$.}}

To show $\xi_\alpha(0)-1>0$ it is of course enough to show $\xi_\alpha(0)>1$. To that end we have
\begin{equation}\label{eq:propxi31}
\xi_\alpha(0)=\frac{\sqrt{\frac{2}{\pi}}e^{-(\erfinv(1-\alpha))^2}}{\alpha\sqrt{2}\erfinv (1-\alpha)}.
\end{equation}
Let
\begin{equation}\label{eq:propxi32}
z_\alpha=\erfinv (1-\alpha) \quad \mbox{and} \quad \alpha=1-z_\alpha.
\end{equation}
Then combining (\ref{eq:propxi31}) and (\ref{eq:propxi32}) we obtain
\begin{equation}\label{eq:propxi33}
\xi_\alpha(0)  =  \frac{\sqrt{\frac{2}{\pi}}e^{-(\erfinv(1-\alpha))^2}}{\alpha\sqrt{2}\erfinv (1-\alpha)} =  \frac{\sqrt{\frac{2}{\pi}}e^{-z_\alpha^2}}{\sqrt{2}z_\alpha(1-\erf(z_\alpha))}=
\frac{\sqrt{\frac{2}{\pi}}e^{-z_\alpha^2}}{\sqrt{2}z_\alpha\erfc(z_\alpha)}=
\frac{\sqrt{\frac{2}{\pi}}e^{-z_\alpha^2}}{\sqrt{2}z_\alpha(2Q(\sqrt{2}z_\alpha))},
\end{equation}
where $Q(\cdot)$ is the $Q$-function associated with the standard normal variables. Now we recall on the following well known inequalities that $Q(\cdot)$ satisfies:
\begin{equation}\label{eq:propxi34}
  \frac{x}{1+x^2}\frac{e^{-\frac{x^2}{2}}}{\sqrt{2\pi}}< Q(x)<   \frac{1}{x}\frac{e^{-\frac{x^2}{2}}}{\sqrt{2\pi}}.
\end{equation}
An easy combination of (\ref{eq:propxi33}) and (\ref{eq:propxi34}) then gives
\begin{equation}\label{eq:propxi35}
\xi_\alpha(0) =
\frac{\sqrt{\frac{2}{\pi}}e^{-z_\alpha^2}}{\sqrt{2}z_\alpha(2Q(\sqrt{2}z_\alpha))}>
\frac{\sqrt{\frac{2}{\pi}}e^{-z_\alpha^2}}{2\sqrt{2}z_\alpha}\sqrt{2}z_\alpha\sqrt{2\pi}e^{\frac{(\sqrt{2}z_\alpha)^2}{2}}=1.
\end{equation}
A combination of the above three observations ensures that for any fixed $\alpha\in (0,1)$ there is a unique $\beta$ such that $\xi_\alpha(\beta)=1$, which as mentioned above essentially means that (\ref{eq:thmweaktheta2}) is an unambiguous PT characterization. For the completeness, in Figure \ref{fig:propxi} we present a few numerical results related to the behavior of $\xi_\alpha(\beta)$ that indeed confirm the above calculations.
\begin{figure}[htb]
\begin{minipage}[b]{.5\linewidth}
\centering
\centerline{\epsfig{figure=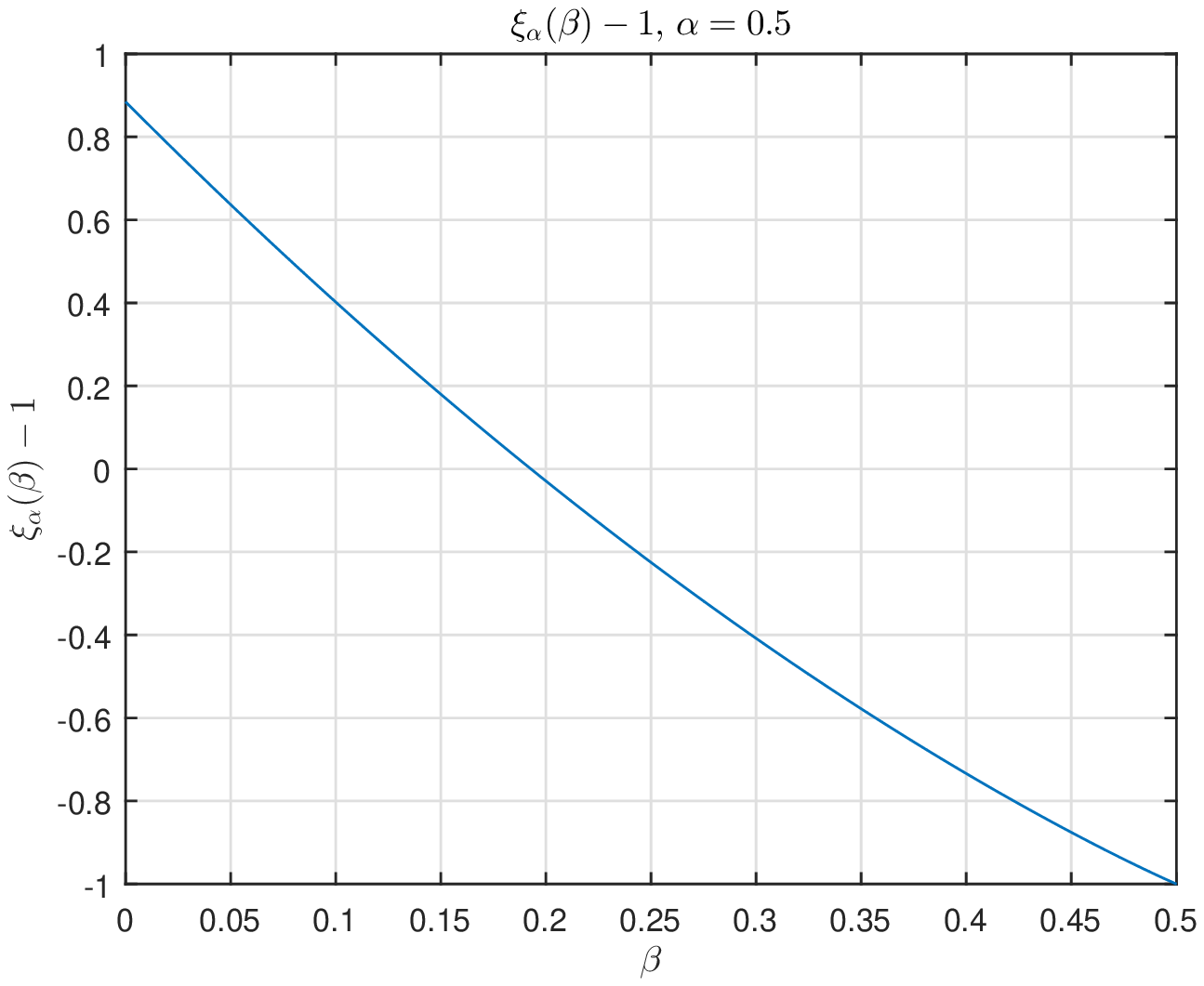,width=9cm,height=7cm}}
\end{minipage}
\begin{minipage}[b]{.5\linewidth}
\centering
\centerline{\epsfig{figure=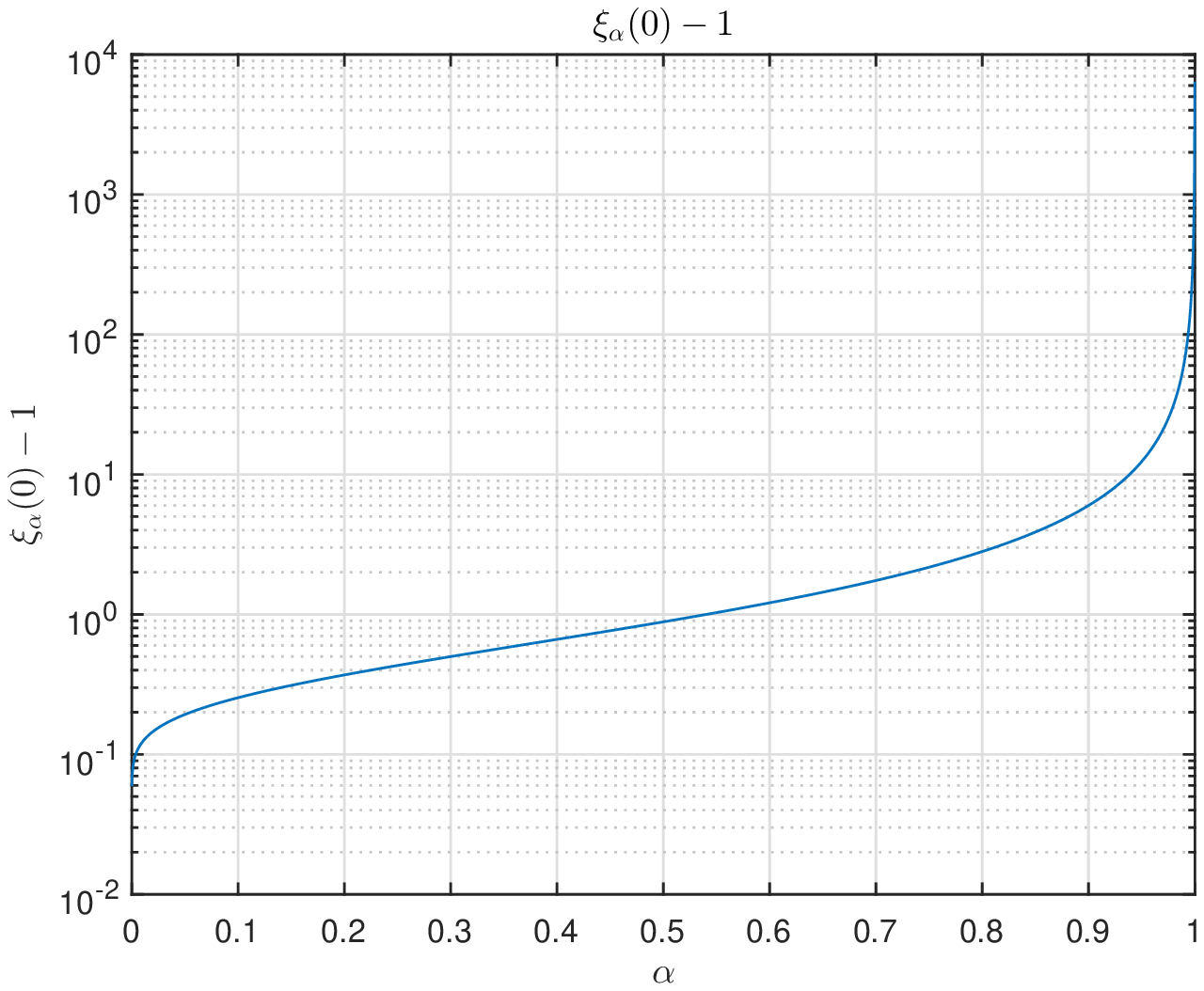,width=9cm,height=7cm}}
\end{minipage}
\caption{Properties of $\xi_\alpha(\beta)$: $\frac{d(\xi_\alpha(\beta)-1)}{d\beta}$ as a function of $\beta$ ($\alpha=0.5$) -- left; $\xi_\alpha(0)$ as a function of $\alpha$ -- right}
\label{fig:propxi}
\end{figure}

\subsubsection{$\psi_\beta(\alpha)$}
\label{sec:proppsixi1}

We now look at $\psi_\beta(\alpha)$. One then notes that one of the key observations that was true for $\xi_\alpha(\beta)$ remains true when it comes to $\psi_\beta(\alpha)$ as well. Namely, for any fixed $\beta\in (0,1)$ there is a unique $\alpha$ such that $\psi_\beta(\alpha)=1$. This essentially ensures that the $\ell_1$'s fundamental PT from the above theorem is also unambiguous when viewed as a function of $\alpha$. To confirm that this is indeed true we proceed in a fashion similar to the one from Section \ref{sec:propxi1} and make the following observations:

\underline{\emph{1) For any fixed $\beta\in (0,1)$, $\psi_\beta(\alpha)-1$ is an increasing function of $\alpha$ on interval $(\beta,1]$.
}}

To see this we proceed by computing the derivative
\begin{eqnarray}\label{eq:proppsixi1}
  \frac{d(\psi_\beta(\alpha)-1)}{d\alpha} & = & \frac{d\lp(1-\beta)\frac{\sqrt{\frac{2}{\pi}}e^{-\lp\erfinv\lp\frac{1-\alpha}{1-\beta}\rp\rp^2}}{\alpha\sqrt{2}\erfinv (\frac{1-\alpha}{1-\beta})}-1\rp}{d\alpha}\nonumber\\
   & = & \frac{2 (\beta-1) e^{-\left (\erfinv\left (\frac{1-\alpha}{1-\beta}\right )\right )^2} \left (\erfinv\left (\frac{1-\alpha}{1-\beta}\right )\right )
  +\sqrt{\pi} \alpha \left (2 \left (\erfinv\left (\frac{1-\alpha}{1-\beta}\right )\right )^2+1\right )}{2 \alpha^2 \sqrt{\pi} \left (\erfinv\left (\frac{1-\alpha}{1-\beta}\right )\right )^2}.
\end{eqnarray}
Let
\begin{equation}\label{eq:proppsixi2}
  q=\erfinv\left (\frac{1-\alpha}{1-\beta}\right ).
\end{equation}
Then
\begin{equation}\label{eq:proppsixi3}
  \frac{2 (\beta-1) e^{-\left (\erfinv\left (\frac{1-\alpha}{1-\beta}\right )\right )^2} \left (\erfinv\left (\frac{1-\alpha}{1-\beta}\right )\right )
  +\sqrt{\pi} \alpha \left (2 \left (\erfinv\left (\frac{1-\alpha}{1-\beta}\right )\right )^2+1\right )}{2 \alpha^2 \sqrt{\pi} \left (\erfinv\left (\frac{1-\alpha}{1-\beta}\right )\right )^2}   =  \frac{2 \frac{\alpha-1}{\erf(q)} e^{-q^2} q
  +\sqrt{\pi} \alpha \left (2q^2+1\right )}{2 \alpha^2 \sqrt{\pi} q^2}.
\end{equation}
Since $\alpha\geq 1-\erf(q)=\erfc(q)$ we also have
\begin{eqnarray}\label{eq:proppsixi4}
2 \frac{\alpha-1}{\erf(q)} e^{-q^2} q
  +\sqrt{\pi} \alpha \left (2q^2+1\right )
  & =&  -2 e^{-q^2} q
  +\sqrt{\pi} \alpha \erfc(q)\left (2q^2+1\right )\nonumber \\
  & > & -2 e^{-q^2}q
  +\frac{2 e^{-q^2}\left (2q^2+1\right )}{\left (q+\sqrt{q^2+2}\right )}\nonumber \\
  & = & \frac{2 e^{-q^2}}{\left (q+\sqrt{q^2+2}\right )}(1+q^2-q\sqrt{q^2+2})\nonumber \\
  & >& 0,
\end{eqnarray}
where the first inequality follows as an application of the following well known inequalities for $\erfc(\cdot)$.
\begin{equation}
\frac{2}{\sqrt{\pi}}\frac{e^{-y^2}}{y+\sqrt{y^2+2}}< \erfc(y)\leq \frac{2}{\sqrt{\pi}}\frac{e^{-y^2}}{y+\sqrt{y^2+\frac{4}{\pi}}}.
 \label{eq:hdg10a}
\end{equation}
Connecting (\ref{eq:proppsixi1}), (\ref{eq:proppsixi2}), (\ref{eq:proppsixi3}), and (\ref{eq:proppsixi4}) we finally have
\begin{equation}\label{proppsoxi5}
    \frac{d(\psi_\beta(\alpha)-1)}{d\alpha}>0,
\end{equation}
and the function $(\psi_\beta(\alpha)-1)$ is indeed increasing on $(\beta,1]$.

\underline{\emph{2) For any fixed $\beta\in (0,1)$, $\lim_{\alpha\rightarrow \beta}\psi_\beta(\alpha)-1=-1$.}}

This easily follows after one observes that
\begin{equation}\label{eq:proppsixi6}
  \lim_{\alpha\rightarrow \beta}\lp\erfinv\lp\frac{1-\alpha}{1-\beta}\rp\rp=\infty
\end{equation}

\underline{\emph{3) For any fixed $\beta\in (0,1)$, $\lim_{\alpha\rightarrow 1}\psi_\beta(\alpha)-1=\infty >0$.}}

This also easily follows after one observes that
\begin{equation}\label{eq:proppsixi7}
  \lim_{\alpha\rightarrow 1}\lp\erfinv\lp\frac{1-\alpha}{1-\beta}\rp\rp=0
\end{equation}
Combining the above three observations one can ensure that for any fixed $\beta\in (0,1)$ there is a unique $\alpha$ such that $\psi_\beta(\alpha)=1$, which reconfirms that the $\ell_1$'s fundamental PT characterization is unambiguous. For the completeness, in Figure \ref{fig:proppsi} we present a few numerical results related to the behavior of $\psi_\beta(\alpha)$ that are indeed in agreement with the above calculations.
\begin{figure}[htb]
\begin{minipage}[b]{.5\linewidth}
\centering
\centerline{\epsfig{figure=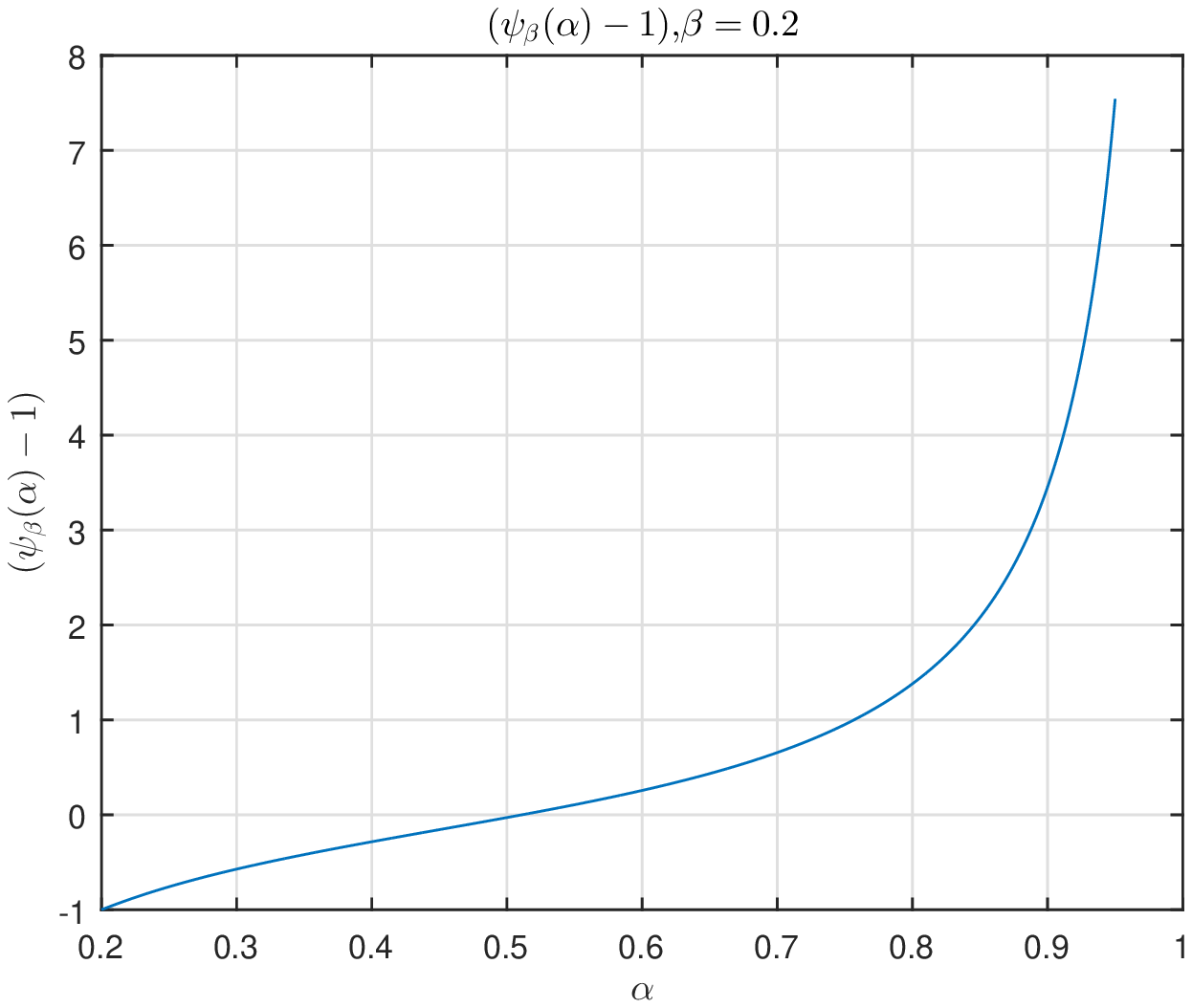,width=9cm,height=7cm}}
\end{minipage}
\begin{minipage}[b]{.5\linewidth}
\centering
\centerline{\epsfig{figure=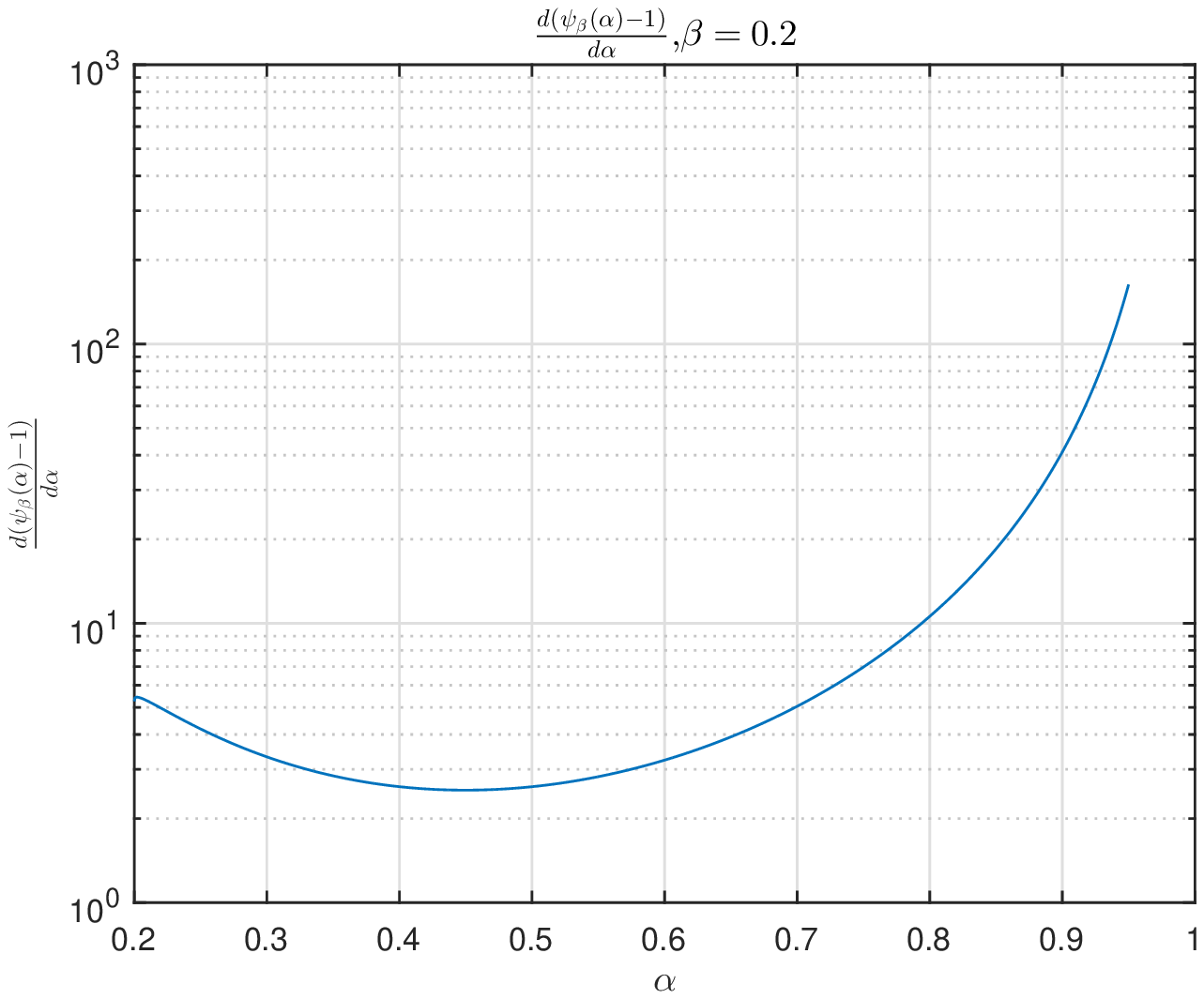,width=9cm,height=7cm}}
\end{minipage}
\caption{Properties of $\psi_\beta(\alpha)$: $\psi_\beta(\alpha)-1$ as a function of $\alpha$ ($\beta=0.2$) -- left; $\frac{d(\psi_\beta(\alpha)-1)}{d\alpha}$ as a function of $\alpha$ ($\beta=0.2$) -- right}
\label{fig:proppsi}
\end{figure}

Finally, to give a little bit of a flavor as to what is actually proven in Theorem \ref{thm:thmweakthr} we in Figure \ref{fig:weakl1PT} show the theoretical PT curve that one can obtain based on (\ref{eq:thmweaktheta2}) as well as how it fits the corresponding one obtained through a high-dimensional geometry approach in \cite{DonohoPol,DonohoUnsigned}.
\begin{figure}[htb]
\centering
\centerline{\epsfig{figure=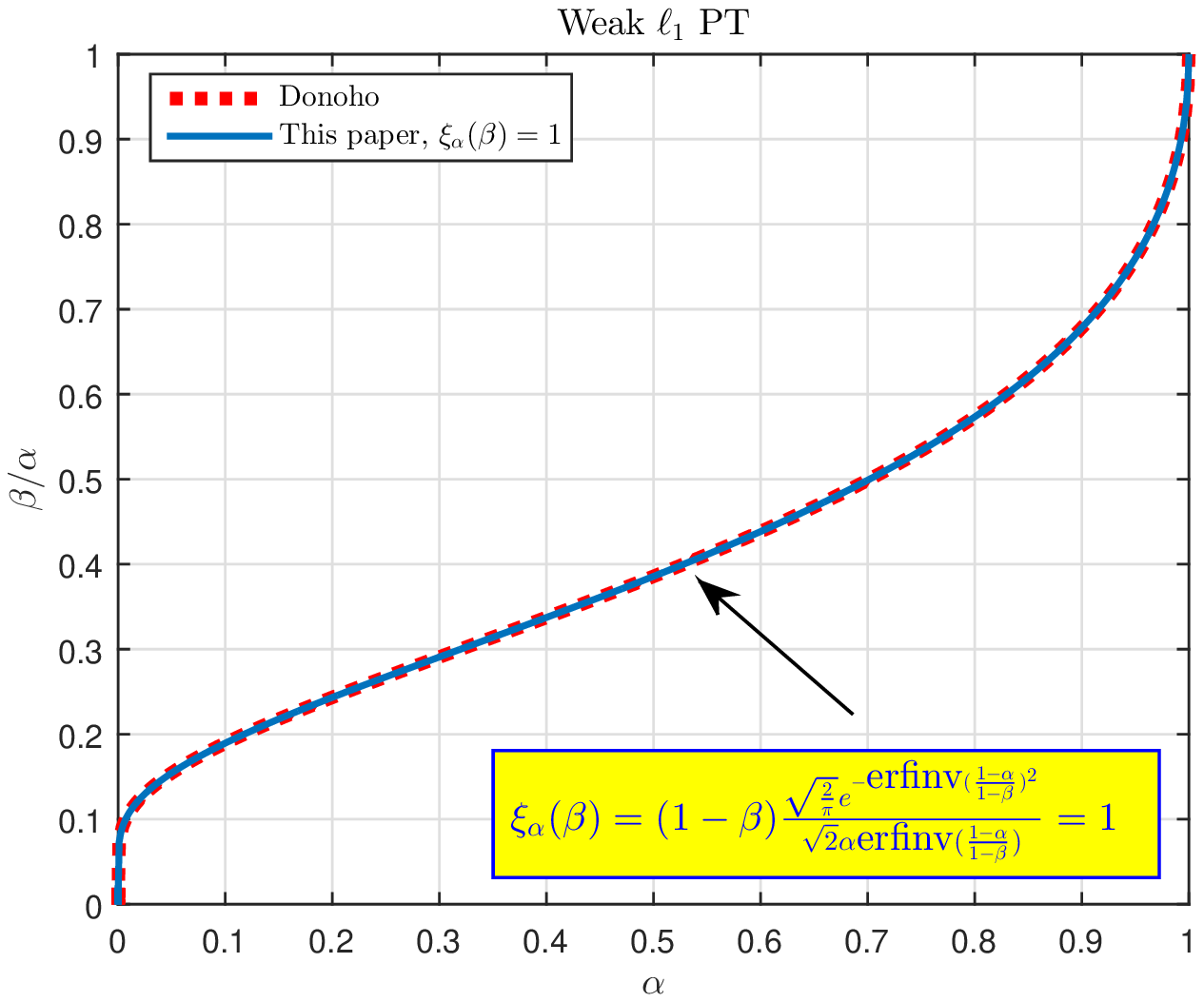,width=11.5cm,height=8cm}}
\caption{$\ell_1$'s weak PT; $\{(\alpha,\beta)|\xi_{\alpha}(\beta)=1\}$}
\label{fig:weakl1PT}
\end{figure}


\section{Large deviations}
\label{sec:ldp}

The results that we presented in the previous section deal with the so-called phase-transition phenomenon and along those lines they determine the so-called breaking points of success of $\ell_1$. These are determined in an asymptotic sense, assuming large system dimensions. Furthermore, they are typically framed through the so-called ``overwhelming" probabilities (which tend to zero as systems dimensions grow large). In this section we will raise the bar a bit higher and try to determine these overwhelming probabilities in a bit more explicit way, essentially the one that goes a bit beyond this standard formulation that states that they tend to zero as the system dimensions grow large. Effectively, we will determine the so-called \emph{rate} at which they tend to zero. As expected, these rates will change as the ratios of systems dimensions change. To fully characterize all the rates, we will essentially determine them for any point in $(\alpha,\beta)$ plane (also, and as is probably expected, to ensure that the results make sense we will assume $\{(\alpha,\beta)| \beta<\alpha,\alpha\in(0,1)\}$). To achieve these characterizations we will first introduce a somewhat novel concept that effectively resembles what is in probability theory known as the large deviation principle/property. Before doing any of that we will start things off by recalling on a couple of results that we established in \cite{StojnicCSetam09,StojnicICASSP09}.

For the simplicity and without loss of generality we will assume that the elements $\x_{1},\x_{2},\dots,\x_{n-k}$ of $\x$ are equal to zero and that the elements $\x_{n-k+1},\x_{n-k+2},\dots,\x_n$ have fixed signs, say they all are positive (one can observe that this is of course in an agreement with the requirement that the weak phase transition imposes). The following was proved in \cite{StojnicCSetam09,StojnicICASSP09} and is one of the key features that enabled us to run the entire machinery developed overthere.
\begin{theorem}(\cite{StojnicCSetam09,StojnicICASSP09} Nonzero elements of $\x$ have fixed signs and location)
Assume that an $m\times n$ measurement matrix $A$ is given. Let $\x$
be a $k$ sparse vector. Also let $\x_1=\x_2=\dots=\x_{n-k}=0$. Let the signs of $\x_{n-k+1},\x_{n-k+2},\dots,\x_n$ be fixed, say all positive. Further, assume that $\y=A\x$ and that $\w$ is
a $n\times 1$ vector. Then (\ref{eq:l1}) will
produce the solution of (\ref{eq:system}) if
\begin{equation}
(\forall \w\in \mR^{n} | A\w=0) \quad  -\sum_{i=n-k+1}^n \w_i<\sum_{i=1}^{n-k}|\w_{i}|.
\label{eq:thmeqgenweak}
\end{equation}\label{thm:thmgenweak}
\end{theorem}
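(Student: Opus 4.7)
The plan is to use a standard null-space / optimality argument for $\ell_1$. Concretely, let $\hat{\x}$ be any optimal solution of (\ref{eq:l1}). Since both $\x$ and $\hat{\x}$ satisfy $A\cdot=\y$, the difference $\w\triangleq \hat{\x}-\x$ lies in the null space of $A$; and by optimality we have $\|\hat{\x}\|_1\le \|\x\|_1$. The whole proof will consist of translating this inequality into the condition (\ref{eq:thmeqgenweak}) and showing it forces $\w=\0$.

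First I would split the $\ell_1$ norm of $\hat{\x}=\x+\w$ according to the support pattern of $\x$. For the indices $i=1,2,\dots,n-k$ where $\x_i=0$, we simply get $|\hat{\x}_i|=|\w_i|$. For the indices $i=n-k+1,\dots,n$ where $\x_i>0$, I would use the elementary bound $|\x_i+\w_i|\ge \x_i+\w_i$ (which holds universally since $|a|\ge a$). Adding these contributions gives
\begin{equation*}
\|\hat{\x}\|_1 \;\ge\; \sum_{i=1}^{n-k}|\w_i| \;+\; \sum_{i=n-k+1}^{n}\x_i \;+\; \sum_{i=n-k+1}^{n}\w_i.
\end{equation*}
Because $\x_i>0$ for $i>n-k$, the middle term on the right is exactly $\|\x\|_1$. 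Combining with $\|\hat{\x}\|_1\le \|\x\|_1$ and cancelling $\|\x\|_1$ yields
\begin{equation*}
\sum_{i=1}^{n-k}|\w_i| \;+\; \sum_{i=n-k+1}^{n}\w_i \;\le\; 0,
\end{equation*}
i.e. $\sum_{i=1}^{n-k}|\w_i|\le -\sum_{i=n-k+1}^{n}\w_i$.

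Now I would invoke the hypothesis (\ref{eq:thmeqgenweak}): for every nonzero $\w\in\mR^n$ with $A\w=\0$, the reverse strict inequality $-\sum_{i=n-k+1}^{n}\w_i<\sum_{i=1}^{n-k}|\w_i|$ holds. Since $\w\in\mathrm{null}(A)$, the only way to reconcile our derived inequality with the hypothesis is $\w=\0$, i.e.\ $\hat{\x}=\x$. Hence every minimizer of (\ref{eq:l1}) coincides with the planted $\x$, which is what the theorem asserts.

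There is essentially no serious obstacle here — the argument is a one-page null-space-property computation. The only subtle point worth being careful about is the strictness in (\ref{eq:thmeqgenweak}): the hypothesis implicitly excludes $\w=\0$ (both sides vanish there), and the strict inequality is exactly what is needed to rule out the borderline case where some minimizer $\hat{\x}\ne\x$ might satisfy $\|\hat{\x}\|_1=\|\x\|_1$ with equality. One should also note that the bound $|\x_i+\w_i|\ge \x_i+\w_i$ is used in the correct direction precisely because the signs of the nonzero entries of $\x$ were fixed to be positive; the sign assumption is what lets us convert $\|\hat{\x}\|_1\le \|\x\|_1$ into a linear inequality on $\w$, and is the reason the statement is a weak-threshold (sign-and-location-fixed) rather than a strong-threshold characterization.
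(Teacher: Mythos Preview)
Your argument is correct and is exactly the standard null-space/optimality computation underlying this result; the paper itself does not reprove the theorem but simply cites it from \cite{StojnicCSetam09,StojnicICASSP09}, where the same argument appears. Your remarks about the role of strictness and of the fixed positive signs are accurate and capture precisely why this is a weak (sign-and-location-fixed) recovery condition.
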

To facilitate the exposition we set
\begin{equation}
\Sw\triangleq\{\w\in S^{n-1}| \quad -\sum_{i=n-k+1}^n \w_i<\sum_{i=1}^{n-k}|\w_{i}|\}.\label{eq:defSwpr}
\end{equation}

\subsection{Upper tail}
\label{sec:uppertail}

As mentioned above our goal will be to determine at which rate the probabilities that $\ell_1$ succeeds/fails go to zero as system dimensions grow. As our goal will also be to do so for any point in $\{(\alpha,\beta)| \beta<\alpha,\alpha\in(0,1)\}$ we will split the presentation depending on where exactly in this set our point of interest is. Namely, we will first consider points $(\alpha,\beta)$ such that $\alpha\geq \alpha_w$ where $\alpha_w$ is such that $\psi_\beta(\alpha_w)=\xi_{\alpha_w}(\beta)=1$. These points will establish what we will refer to as the upper tail. The remaining ones will be discussed in the following section and they will establish what we will refer to as the lower tail.

As in Theorem \ref{thm:thmweakthr}, we will assume that the elements of $A$ are i.i.d. standard normals and will be interested in the following probability
\begin{equation}
P_{err}\triangleq P(\min_{\w\in S_w}\|A\w\|_2\leq 0)=P(\max_{\w\in S_w}\min_{\|\y\|_2=1}(\y^T A\w )\geq 0).
\label{eq:ldpprob}
\end{equation}
It is relatively easy to see that $P_{err}$ is the so-called probability of error/failure, i.e. the probability that (\ref{eq:l1}) fails to produce the solution of (\ref{eq:system}). A simple application of the Chernoff bound then gives
\begin{equation}
P_{err}=P(\max_{\w\in S_w}\min_{\|\y\|_2=1}(\y^T A\w )\geq 0)\leq E e^{c_3\max_{\w\in S_w}\min_{\|\y\|_2=1}(\y^T A\w )}=E \max_{\w\in S_w}\min_{\|\y\|_2=1}e^{(-c_3\y^T A\w )},
\label{eq:ldpprob1}
\end{equation}
where we assume $c_3\geq 0$ and note that $A$ and $-A$ have the same distribution. Following the machinery of \cite{StojnicBlockasymldpfinn15,StojnicLiftStrSec13} we then have
\begin{equation}
P_{err}\leq e^{-\frac{c_3^2}{2}}E\max_{\w\in S_w}\min_{\|\y\|_2=1}e^{-c_3(\y^T A\w+g)}\leq
e^{-\frac{c_3^2}{2}}Ee^{-c_3\|\g\|_2}Ee^{c_3w(\h,S_w)},
\label{eq:ldpprob3}
\end{equation}
where
\begin{equation}
w(\h,\Sw)\triangleq\max_{\w\in \Sw} (\h^T\w), \label{eq:widthdefSw}
\end{equation}
and the elements of $\h$ are i.i.d. standard normals. To characterize the right hand side of (\ref{eq:ldpprob3}) we focus on $w(\h,S_w)$. In \cite{StojnicCSetam09,StojnicCSetamBlock09,StojnicBlockasymldpfinn15,StojnicLiftStrSec13} we developed a super powerful mechanism that enables a very elegant and useful representation for $w(\h,S_w)$. We will of course skip the details of these presentations and instead just present the final, neat results that we will utilize here.

We start by setting
\begin{equation}
\hw\triangleq(|\h_1|,|\h_2|,\dots,|\h_{n-k}|,
-\h_{n-k+1},-\h_{n-k+2},\dots,-\h_{n})^T.\label{eq:defhweak}
\end{equation}
Then one can characterize $w(\h,\Sw)$ in (\ref{eq:widthdefSw}) in the following way
\begin{eqnarray}
w(\h,\Sw) = \max_{\bar{\y}\in \mR^{n}} & &  \sum_{i=1}^{n} \hw_i \bar{\y}_i\nonumber \\
\mbox{subject to} &  & \bar{\y}_i\geq 0, 0\leq i\leq n-k\nonumber \\
& & \sum_{i=n-k+1}^n\bar{\y}_i\geq \sum_{i=1}^{n-k} \bar{\y}_i \nonumber \\
& & \sum_{i=1}^{n}\bar{\y}_i^2\leq 1\label{eq:workww2}
\end{eqnarray}
where $\hw_i$ is the $i$-th element of $\hw$ and $\bar{\y}_i$ is the $i$-th element of $\bar{\y}$. Solving (\ref{eq:workww2}) as was done in \cite{StojnicCSetam09,StojnicCSetamBlock09,StojnicBlockasymldpfinn15,StojnicLiftStrSec13} one obtains
\begin{eqnarray}
w(\h,\Sw) = -\max_{\nu\geq 0,\gamma\geq 0}\min_{\bar{\y}} & & \sum_{i=1}^{n} -\hw_i \bar{\y}_i+\nu\sum_{i=1}^{n-k}\bar{\y}_i
-\nu\sum_{i=n-k+1}^{n}\bar{\y}_i+\gamma\sum_{i=1}^{n}\bar{\y}_i^2-\gamma\nonumber \\
\mbox{subject to} & & \bar{\y}_i\geq 0, 0\leq i\leq n-k.\label{eq:ldpwhSw0}
\end{eqnarray}
Finally, after solving the inner minimization we have
\begin{eqnarray}
w(\h,\Sw) & = & \min_{\nu\geq0,\gamma\geq 0} \frac{\sum_{i=1}^{n-k}\max(\hw_i-\nu,0)^2+\sum_{i=n-k+1}^{n}(\hw_i+\nu)^2}{4\gamma}+\gamma\nonumber \\
& = & \min_{\nu\geq0}\sqrt{\sum_{i=1}^{n-k}\max(\hw_i-\nu,0)^2+\sum_{i=n-k+1}^{n}(\hw_i+\nu)^2}.\label{eq:ldpwhSw}
\end{eqnarray}
The above results provide a way to upper bound $P_{err}$. We summarize them in the following theorem.
\begin{theorem}
Let $A$ be an $m\times n$ matrix in (\ref{eq:system})
with i.i.d. standard normal components. Let
the unknown $\x$ in (\ref{eq:system}) be $k$-sparse and let the location and the signs of nonzero elements of $\x$ be arbitrarily chosen but fixed. Let $P_{err}$ be the probability that the solution of (\ref{eq:l1}) is not the $k$-sparse solution of (\ref{eq:system}). Then
\begin{equation}
P_{err}\leq \min_{c_3\geq 0}e^{-\frac{c_3^2}{2}}e^{-c_3\|\g\|_2}Ee^{c_3w(\h,S_w)}
=\min_{c_3\geq 0}\left (e^{-\frac{c_3^2}{2}}\frac{1}{\sqrt{2\pi}^m}\int_{\g}e^{-\sum_{i=1}^{m}\g_i^2/2-c_3\|\g\|_2}d\g \min_{\nu\geq 0,\gamma\geq\frac{c_3}{2}} w_1^{n-k}w_2^{k}e^{c_3\gamma}\right ),
\label{eq:ldpthm1perrub1}
\end{equation}
where
\begin{eqnarray}
w_1 &=& \frac{1}{\sqrt{2\pi}}\int_{\bar{h}}e^{-\bar{h}^2/2}e^{c_3\max(|\bar{h}|-\nu,0)^2/4/\gamma}d\bar{h}
  =\frac{e^{\frac{c_3\nu^2/4/\gamma}{1-c_3/2/\gamma}}}{\sqrt{1-c_3/2/\gamma}}\erfc\left (\frac{\nu}{\sqrt{2}\sqrt{1-c_3/2/\gamma}}\right )+\erf\left (\frac{\nu}{\sqrt{2}}\right )\nonumber \\
w_2 &=& \frac{1}{\sqrt{2\pi}}\int_{\bar{h}}e^{-\bar{h}^2/2}e^{c_3(\bar{h}+\nu)^2/4/\gamma}d\bar{h}
  =\frac{e^{\frac{c_3\nu^2/4/\gamma}{1-c_3/2/\gamma}}}{\sqrt{1-c_3/2/\gamma}}.\label{eq:ldpthm1perrub2}
\end{eqnarray}\label{thm:ldp1}
\end{theorem}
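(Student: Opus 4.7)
The plan is to assemble Theorem \ref{thm:ldp1} essentially by concatenating the bounds already derived in the preceding display chain and then evaluating one-dimensional Gaussian integrals explicitly. First I would start from (\ref{eq:ldpprob}), which rewrites $P_{err}$ as the probability that the optimized Gaussian bilinear form $\max_{\w\in S_w}\min_{\|\y\|_2=1} \y^T A \w$ is nonnegative, and apply the Chernoff bound with parameter $c_3\geq 0$, as in (\ref{eq:ldpprob1}). Next I would invoke the Gordon-type decoupling step already presented in (\ref{eq:ldpprob3}) (the machinery of \cite{StojnicBlockasymldpfinn15,StojnicLiftStrSec13}): the introduction of an independent standard normal $g$ permits replacing $\y^T A \w+g$ with $\y^T\g-\w^T\h$ in expectation, thereby producing the product factorization $Ee^{-c_3\|\g\|_2}\cdot Ee^{c_3 w(\h,S_w)}$.

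Having reached this product form, I would substitute the dual characterization (\ref{eq:ldpwhSw}) of $w(\h,S_w)$. The key manipulation is to pull the minimum over $(\nu,\gamma)$ outside the expectation by using the first of the two equivalent forms in (\ref{eq:ldpwhSw}): for any admissible pair $(\nu,\gamma)$,
\begin{equation*}
c_3 w(\h,S_w)\leq \frac{c_3}{4\gamma}\left(\sum_{i=1}^{n-k}\max(\hw_i-\nu,0)^2+\sum_{i=n-k+1}^n(\hw_i+\nu)^2\right)+c_3\gamma,
\end{equation*}
so that monotonicity of the exponential and the $Ee^{(\cdot)}$ functional, combined with independence of the coordinates of $\hw$, yields the product $w_1^{n-k}w_2^k e^{c_3\gamma}$, where $w_1,w_2$ are the one-variable expectations displayed in (\ref{eq:ldpthm1perrub2}). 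The minimization over $\nu,\gamma$ can then be pulled to the outside of the expectation because the bound is valid for every feasible $(\nu,\gamma)$.

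The remaining work is the explicit evaluation of the scalar integrals. For $w_2$, the integrand is a Gaussian density multiplied by $\exp(c_3(\bar h+\nu)^2/(4\gamma))$; completing the square in $\bar h$ gives a rescaled Gaussian provided the convergence condition $c_3/(2\gamma)<1$ holds, which forces the constraint $\gamma\geq c_3/2$ in the statement and produces the closed form for $w_2$. For $w_1$, I would split the integral at $|\bar h|=\nu$: on $|\bar h|<\nu$ the exponent vanishes so one obtains the $\erf(\nu/\sqrt 2)$ contribution, while on $|\bar h|\geq\nu$ the same Gaussian completion as for $w_2$ together with the residual $\erfc$ tail gives the stated $\erfc(\nu/(\sqrt{2}\sqrt{1-c_3/(2\gamma)}))$ term (the factor of two accounts for the absolute value). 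Finally, the factor $e^{-c_3\|\g\|_2}$ stays as a straightforward $m$-dimensional Gaussian integral without further simplification.

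The main obstacle, technically, is bookkeeping rather than ideas: one must verify that the convergence condition $c_3<2\gamma$ is exactly what is needed for every Gaussian integral to be finite (hence the $\gamma\geq c_3/2$ lower bound in the $\min$), and justify that the outer $\min$ over $(\nu,\gamma)$ can legitimately be exchanged with the expectation. The latter is fine here because the bound holds pointwise in $(\nu,\gamma)$ and we only need an upper bound on $P_{err}$, so taking the infimum after expectation is trivially valid. Everything else is a direct calculation of one-dimensional Gaussian integrals.
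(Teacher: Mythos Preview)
Your proposal is correct and follows essentially the same approach as the paper: the paper's own proof is just ``Follows from the above considerations and ultimately through the mechanisms developed in \cite{StojnicCSetam09,StojnicCSetamBlock09,StojnicBlockasymldpfinn15,StojnicLiftStrSec13},'' where the ``above considerations'' are precisely the chain (\ref{eq:ldpprob})--(\ref{eq:ldpwhSw}) that you walk through. If anything, you supply more detail than the paper does, in particular the explicit completion-of-the-square evaluations of $w_1,w_2$ and the justification of the constraint $\gamma\geq c_3/2$.
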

\begin{proof}
Follows from the above considerations and ultimately through the mechanisms developed in \cite{StojnicCSetam09,StojnicCSetamBlock09,StojnicBlockasymldpfinn15,StojnicLiftStrSec13}.
\end{proof}
Although the bound given in the above theorem is not exactly the type of quantity that we are interested in below we would like to point out that it is valid for any integers $m$, $k$, and $n$ (provided $k\leq m\leq n$ so that the results make sense). Our main concern below though is the asymptotic regime, basically the same one as in Theorem \ref{thm:thmweakthr}. In such a scenario the rate at which $P_{err}$ decays is of particular importance. Such a rate one can define as
\begin{equation}\label{eq:ldpasymp1}
  I_{err}\triangleq\lim_{n\rightarrow\infty}\frac{\log{P_{err}}}{n}.
\end{equation}
This now clearly resembles the so-called large deviation property/principle (LDP) with $I_{err}$ emulating the so-called LDP's rate (indicator) function. Based on Theorem \ref{thm:ldp1} we have the following LDP type of theorem.
\begin{theorem}
Assume the setup of Theorem \ref{thm:ldp1}. Further, let integers $m$, $k$, and $n$ be large ($k\leq m\leq n$) such that $\beta=\frac{k}{n}$ and $\alpha=\frac{m}{n}$ are constants independent of $n$. Assume that a pair $(\alpha,\beta)$  is given. Also, assume the following scaling: $c_3\rightarrow c_3\sqrt{n}$ and $\gamma\rightarrow\gamma\sqrt{n}$. Then
\begin{equation}
I_{err}(\alpha,\beta)\triangleq\lim_{n\rightarrow\infty}\frac{\log{P_{err}}}{n}
\leq \min_{c_3\geq 0}\left (-\frac{(c_3)^2}{2}+I_{sph}+\min_{\nu\geq 0,\gamma\geq 0} ((1-\beta)\log{w_1}+\beta\log{w_2}+c_3\gamma)\right )\triangleq I_{err,u}^{(ub)}(\alpha,\beta),
\label{eq:ldpthm2Ierrub1}
\end{equation}
where
\begin{eqnarray}
I_{sph} &=& \widehat{\gamma}c_3-\frac{\alpha }{2}\log\left (1-\frac{c_3}{2\widehat{\gamma}}\right )\nonumber \\
  \widehat{\gamma} &=& \frac{c_3-\sqrt{(c_3)^2+4\alpha}}{4}\nonumber \\
w_1 &=& \frac{1}{\sqrt{2\pi}}\int_{\bar{h}}e^{-\bar{h}^2/2}e^{c_3\max(|\bar{h}|-\nu,0)^2/4/\gamma}d\bar{h}
  =\frac{e^{\frac{c_3\nu^2/4/\gamma}{1-c_3/2/\gamma}}}{\sqrt{1-c_3/2/\gamma}}\erfc\left (\frac{\nu}{\sqrt{2}\sqrt{1-c_3/2/\gamma}}\right )+\erf\left (\frac{\nu}{\sqrt{2}}\right )\nonumber \\
  w_2 &=& \frac{1}{\sqrt{2\pi}}\int_{\bar{h}}e^{-\bar{h}^2/2}e^{c_3(\bar{h}+\nu)^2/4/\gamma}d\bar{h}
  =\frac{e^{\frac{c_3\nu^2/4/\gamma}{1-c_3/2/\gamma}}}{\sqrt{1-c_3/2/\gamma}}.\label{eq:ldpthm2perrub2}
\end{eqnarray}\label{thm:ldp2}
\end{theorem}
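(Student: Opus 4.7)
The plan is to take logs of the finite-$n$ bound of Theorem~\ref{thm:ldp1}, divide by $n$, and evaluate in the stated regime $m/n\to\alpha$, $k/n\to\beta$ with rescaling $c_3\to c_3\sqrt{n}$, $\gamma\to\gamma\sqrt{n}$. The bound factors as (a) the Chernoff Gaussian $e^{-c_3^2/2}$, (b) the spherical integral $\frac{1}{(2\pi)^{m/2}}\int_{\g}e^{-\|\g\|_2^2/2-c_3\|\g\|_2}d\g$, and (c) the product $w_1^{n-k}w_2^{k}e^{c_3\gamma}$. A key observation is that $w_1$ and $w_2$ depend on $(c_3,\gamma)$ only through the ratio $c_3/(2\gamma)$, which is invariant under the chosen scaling, so their defining integrals are literally identical to those in Theorem~\ref{thm:ldp1}. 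Consequently, factor (c) contributes $(1-\beta)\log w_1+\beta\log w_2+c_3\gamma$ to $n^{-1}\log P_{err}$, and factor (a) contributes $-c_3^2/2$; both the outer $\min$ over $c_3$ and the inner $\min$ over $(\nu,\gamma)$ pass through the limit because after rescaling the objective inside the $\log$ is $n$-independent everywhere except in the spherical integral.

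The main asymptotic step is the spherical integral, which I will handle by Laplace's method. In polar coordinates with $|S^{m-1}|=2\pi^{m/2}/\Gamma(m/2)$, it equals $\frac{1}{2^{m/2-1}\Gamma(m/2)}\int_0^\infty r^{m-1}e^{-r^2/2-c_3\sqrt{n}\,r}\,dr$; the substitution $r=s\sqrt{n}$ puts it in Laplace form $n^{m/2}\int_0^\infty s^{m-1}e^{-n(s^2/2+c_3 s)}ds$. Combining Stirling for $\Gamma(m/2)$ with the standard Laplace tail estimate then gives
$$\lim_{n\to\infty}\frac{1}{n}\log\left(\frac{1}{(2\pi)^{m/2}}\int_{\g}e^{-\|\g\|_2^2/2-c_3\sqrt{n}\|\g\|_2}d\g\right)=-\min_{s>0}\left(\frac{s^2}{2}+c_3 s-\alpha\log s\right)-\frac{\alpha}{2}\log\alpha+\frac{\alpha}{2}.$$
The first-order condition $s+c_3-\alpha/s=0$ yields the positive root $s^{*}=(-c_3+\sqrt{c_3^2+4\alpha})/2$. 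Setting $\widehat\gamma=-s^{*}/2=(c_3-\sqrt{c_3^2+4\alpha})/4$ and using the saddle-point identity $\alpha=s^{*}(s^{*}+c_3)$ (equivalently $1-c_3/(2\widehat\gamma)=\alpha/(s^{*})^2$), the right-hand side collapses to $\widehat\gamma c_3-(\alpha/2)\log(1-c_3/(2\widehat\gamma))=I_{sph}$, matching the formula in the theorem.

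Assembling the three contributions and reinstating the minimizations reproduces (\ref{eq:ldpthm2Ierrub1}) exactly. The main obstacle will be justifying the interchange of the $n\to\infty$ limit with both the outer and inner minimizations, together with uniformity of the Laplace asymptotics on the feasibility region (in particular on $\gamma>c_3/2$, where $w_1,w_2$ are finite). Uniformity on compact sublevel sets of the limiting objective, combined with coercivity of $c_3\mapsto-c_3^2/2+I_{sph}(c_3)$, should suffice; Stirling and polynomial prefactor corrections are $o(n)$ and vanish after normalization by $n$. The scaling $c_3\to c_3\sqrt{n}$, $\gamma\to\gamma\sqrt{n}$ is precisely tuned so that all three contributions end up at order $\Theta(n)$, which is why a nontrivial LDP rate function emerges in the first place.
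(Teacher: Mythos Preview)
Your proposal is correct and follows essentially the same route as the paper: take $n^{-1}\log$ of the finite-$n$ bound in Theorem~\ref{thm:ldp1}, observe that $w_1,w_2$ and $c_3\gamma$ are scale-invariant under $c_3\to c_3\sqrt{n}$, $\gamma\to\gamma\sqrt{n}$, and identify the spherical-integral limit as $I_{sph}$. The only difference is that the paper simply cites \cite{StojnicMoreSophHopBnds10} for the identity $\lim_{n\to\infty}n^{-1}\log Ee^{-c_3\sqrt{n}\|\g\|_2}=\widehat{\gamma}c_3-\tfrac{\alpha}{2}\log(1-c_3/(2\widehat{\gamma}))$, whereas you rederive it via Laplace's method in polar coordinates; your saddle-point computation checks out (the identity $(s^*)^2+c_3s^*=\alpha$ is exactly what collapses your expression to the paper's $I_{sph}$).
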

\begin{proof} Follows from Theorem \ref{thm:ldp1} and by noting that in \cite{StojnicMoreSophHopBnds10} we established
\begin{equation}
I_{sph}=\lim_{n\rightarrow\infty}\frac{1}{n}\log(Ee^{-c_3\sqrt{n}\|\g\|_2})
=\widehat{\gamma}c_3-\frac{\alpha }{2}\log\left (1-\frac{c_3}{2\widehat{\gamma}}\right ),\label{eq:gamaiden2lift}
\end{equation}
where
\begin{equation}
\widehat{\gamma}=\frac{c_3-\sqrt{(c_3)^2+4\alpha }}{4}.\label{eq:gamaiden3lift}
\end{equation}
\end{proof}

One can now numerically solve the above optimization problem and obtain the estimates for the rate of $P_{err}$'s decay. However, our goal here will be much more than that. We will raise the bar to an ultimate level and in the following subsection we will present a closed form solution to the above optimization problem. We do emphasize that the solution that we will present will look fairly elegant and consequently one may be tempted to believe that it was fairly straightforward to achieve it. Such a statement could not be further from the truth. Not only did it turn out to be quite a challenge to achieve an elegant presentation of the final solution but it was also fairly hard to provide any type of closed form solution. This is along the lines of what happened when we created the fundamental $\ell_1$ PT from Theorem \ref{thm:thmweakthr} in \cite{StojnicCSetam09}. The final result looked incredibly elegant, however before we discovered it, achieving any form of PT characterization even remotely close to the true one had been considered quite a success for decades.

\subsection{A detailed analysis of $I_{err,u}^{(ub)}$}
\label{sec:analysisIerr}

We start by introducing the following
\begin{equation}\label{eq:detanalIerr1}
  A_{0}\triangleq\sqrt{1-\frac{c_3}{2\gamma}}.
\end{equation}
One is then left with the following problem
\begin{equation}\label{eq:detanalIeer2}
I_{err,u}^{(ub)}(\alpha,\beta)\triangleq \min_{c_3\geq 0,\nu\geq 0,A_0\leq 1}\zeta_{\alpha,\beta}(c_3,\nu,A_0)
\end{equation}
where
\begin{eqnarray}
\zeta_{\alpha,\beta}(c_3,\nu,A_0)&=&\left (-\frac{c_3^2}{2}+I_{sph}+(1-\beta)\log{w_1}+\beta\log{w_2}+\frac{c_3^2}{2(1-A_0^2)}\right )\nonumber \\
I_{sph} &=& \widehat{\gamma}c_3-\frac{\alpha }{2}\log\left (1-\frac{c_3}{2\widehat{\gamma}}\right )\nonumber \\
  \widehat{\gamma} &=& \frac{c_3-\sqrt{(c_3)^2+4\alpha}}{4}\nonumber \\
w_1 &=&
  \frac{e^{\frac{(1-A_0^2)\nu^2}{2A_0^2}}}{A_0}\erfc\left (\frac{\nu}{\sqrt{2}A_0}\right )+\erf\left (\frac{\nu}{\sqrt{2}}\right )\nonumber \\
  w_2 &=&
  \frac{e^{\frac{(1-A_0^2)\nu^2}{2A_0^2}}}{A_0}.\label{eq:detanalIeer3}
\end{eqnarray}
We will now proceed by computing the derivatives of $\zeta_{\alpha,\beta}(c_3,\nu,A_0)$ with respect to $c_3$, $\nu$, and $A_0$.

\subsubsection{Handling the derivatives of $\zeta_{\alpha,\beta}(c_3,\nu,A_0)$}
\label{sec:derivativeszeta}

We start with the derivative with respect to $\nu$.
\begin{eqnarray}
\frac{d\zeta_{\alpha,\beta}(c_3,\nu,A_0)}{d\nu}&=&\frac{d}{d\nu}\left (-\frac{c_3^2}{2}+I_{sph}+(1-\beta)\log{w_1}+\beta\log{w_2}+\frac{c_3^2}{2(1-A_0^2)}\right )\nonumber \\
&=& \frac{\beta(1-A_0^2)\nu}{A_0^2}+\frac{1-\beta}{w_1}\left (\frac{(1-A_0^2)\nu}{A_0^2}\frac{e^{\frac{(1-A_0^2)\nu^2}{2A_0^2}}}{A_0}\erfc\left (\frac{\nu}{\sqrt{2}A_0}\right )-\frac{e^{\frac{(1-A_0^2)\nu^2}{2A_0^2}}}{A_0^2}\frac{2e^{-\frac{\nu^2}{2A_0^2}}}{\sqrt{2}\sqrt{\pi}}\right)\nonumber \\
&&+\frac{1-\beta}{w_1}\frac{2e^{-\frac{\nu^2}{2}}}{\sqrt{2}\sqrt{\pi}}\nonumber \\
&=& \frac{\beta(1-A_0^2)\nu}{A_0^2}+\frac{1-\beta}{w_1}\left (\frac{(1-A_0^2)\nu}{A_0^2}\frac{e^{\frac{(1-A_0^2)\nu^2}{2A_0^2}}}{A_0}\erfc\left (\frac{\nu}{\sqrt{2}A_0}\right )-\frac{1-A_0^2}{A_0^2}\frac{2e^{-\frac{\nu^2}{2}}}{\sqrt{2}\sqrt{\pi}}\right)\nonumber \\
&=& \frac{1}{w_1}\left (\frac{\beta(1-A_0^2)\nu}{A_0^2}\erf\left (\frac{\nu}{\sqrt{2}}\right )+\frac{(1-A_0^2)\nu}{A_0^3}\frac{\erfc\left (\frac{\nu}{\sqrt{2}A_0}\right )}{e^{-\frac{(1-A_0^2)\nu^2}{2A_0^2}}}-(1-\beta)\left (\frac{1-A_0^2}{A_0^2}\right )\frac{2e^{-\frac{\nu^2}{2}}}{\sqrt{2}\sqrt{\pi}}\right)\nonumber \\
&=& \frac{1-A_0^2}{w_1A_0^2}\left (\beta\nu\erf\left (\frac{\nu}{\sqrt{2}}\right )+\frac{\nu}{A_0}\frac{\erfc\left (\frac{\nu}{\sqrt{2}A_0}\right )}{e^{-\frac{(1-A_0^2)\nu^2}{2A_0^2}}}-(1-\beta)\sqrt{\frac{2}{\pi}}e^{-\frac{\nu^2}{2}}\right).\nonumber \\
.\label{eq:detanalIeer4}
\end{eqnarray}
We will also compute the derivative with respect to $c_3$. We start with
\begin{equation}
\frac{d\zeta_{\alpha,\beta}(c_3,\nu,A_0)}{dc_3}=-c_3+\frac{c_3}{1-A_0^2}+\frac{dI_{sph}}{dc_3}.
\label{eq:detanalIeer5}
\end{equation}
Then we also have
\begin{equation}
\frac{dI_{sph}}{dc_3}=\frac{d\widehat{\gamma}}{dc_3}c_3+\widehat{\gamma}-\frac{\alpha}{2(1-c_3/2/\widehat{\gamma})}
(-1/2/\widehat{\gamma}+1/2/\widehat{\gamma}^2\frac{d\widehat{\gamma}}{dc_3}),
\label{eq:detanalIeer6}
\end{equation}
where
\begin{equation}
\frac{d\widehat{\gamma}}{dc_3}=\frac{1}{4}-\frac{c_3}{4\sqrt{c_3^2+4\alpha}}.
\label{eq:detanalIeer7}
\end{equation}
Combining (\ref{eq:detanalIeer6}) and (\ref{eq:detanalIeer7}) we find
\begin{equation}
\frac{dI_{sph}}{dc_3}= \frac{c_3-\sqrt{(c_3)^2+4\alpha}}{2}.
\label{eq:detanalIeer8}
\end{equation}
Combining further (\ref{eq:detanalIeer5}) and (\ref{eq:detanalIeer8}) we finally obtain for the derivative wiht respect to $c_3$
\begin{equation}
\frac{d\zeta_{\alpha,\beta}(c_3,\nu,A_0)}{dc_3}=-c_3+\frac{c_3}{1-A_0^2}+\frac{c_3-\sqrt{(c_3)^2+4\alpha}}{2}.
\label{eq:detanalIeer9}
\end{equation}
Finally we compute the derivative with respect to $A_0$ as well. To that end we have
\begin{eqnarray}
\frac{d\zeta_{\alpha,\beta}(c_3,\nu,A_0)}{dA_0}&=&\frac{d}{dA_0}\left (-\frac{c_3^2}{2}+I_{sph}+(1-\beta)\log{w_1}+\beta\log{w_2}+\frac{c_3^2}{2(1-A_0^2)}\right )\nonumber \\
&=& (1-\beta)\frac{d\log{w_1}}{dA_0}+\beta\left (\nu^2\frac{d}{dA_0}\left (\frac{1-A_0^2}{2A_0^2}\right )-\frac{1}{A_0}\right )+\frac{c_3^2A_0}{(1-A_0^2)^2}\nonumber \\
&=& (1-\beta)\frac{d\log{w_1}}{dA_0}-\frac{\beta\nu^2}{A_0^3}-\frac{\beta A_0^2}{A_0^3}+\frac{c_3^2A_0}{(1-A_0^2)^2},\nonumber \\
\label{eq:detanalIeer10}
\end{eqnarray}
and
\begin{eqnarray}
\frac{d\log{w_1}}{dA_0}=\frac{d\log{(\frac{1}{A_0}e^{\frac{\nu^2}{2A_0^2}}\erfc(\frac{\nu}{\sqrt{2}A_0})+e^{\frac{\nu^2}{2}}\erf(\frac{\nu}{\sqrt{2}}))}}{dA_0}=
-\frac{e^{\frac{\nu^2}{2A_0^2}}(A_0^2+\nu^2)\erfc(\frac{\nu}{\sqrt{2}A_0})-\sqrt{\frac{2}{\pi}}A_0\nu}
{A_0^3(e^{\frac{\nu^2}{2A_0^2}}\erfc(\frac{\nu}{\sqrt{2}A_0})+A_0e^{\frac{\nu^2}{2}}\erf(\frac{\nu}{\sqrt{2}}))}.\nonumber \\
\label{eq:detanalIeer11}
\end{eqnarray}
We will below select certain values for $c_3$, $\nu$, and $A_0$ and check if one has all of the above derivatives equal to zeros for such values. Before doing that we will first facilitate the procedure a bit by recognizing from (\ref{eq:detanalIeer9}) that setting the derivative with respect to $c_3$ to zero implies that
\begin{equation}\label{eq:detanalIeer12}
  c_3=\frac{(1-A_0^2)\sqrt{\alpha}}{A_0}.
\end{equation}
Using (\ref{eq:detanalIeer12}) one then transforms (\ref{eq:detanalIeer10}) to obtain
\begin{equation}
\frac{d\zeta_{\alpha,\beta}(c_3,\nu,A_0)}{dA_0}=
 (1-\beta)\frac{d\log{w_1}}{dA_0}-\frac{\beta\nu^2}{A_0^3}-\frac{\beta A_0^2}{A_0^3}+\frac{\alpha A_0^2}{A_0^3}= (1-\beta)\frac{d\log{w_1}}{dA_0}+\frac{(\alpha-\beta) A_0^2-\beta\nu^2}{A_0^3}.
\label{eq:detanalIeer13}
\end{equation}

\subsubsection{Selecting the values for $c_3$, $\nu$, and $A_0$}
\label{sec:valuesc3nuA0}

We now recall that what we are looking for is essentially a collection of values for $c_3$, $\nu$, and $A_0$ so to compute $I_{err,u}^{(ub)}$ in (\ref{eq:detanalIeer2}) and ultimately in Theorem \ref{thm:ldp2}. What we are given though is a pair $(\alpha,\beta)$ (we recall that the regime we consider is asymptotic and $\alpha=\frac{m}{n}$ and $\beta=\frac{k}{n}$; $m$ is the number of equations in (\ref{eq:system}), $n$ is the length of $\x$, and $k$ is its sparsity). We also recall that in this section we are cosidering the so called upper tail i.e. the scenario where $\alpha$ is larger than the breaking point $\alpha_w$ obtained from $\ell_1$'s PT, i.e. from $\psi_\beta(\alpha_w)=\xi_{\alpha_w}(\beta)=1$.

Now, we will select $\nu$ and $A_0$ in the following way. Let $\beta_w$ be the solution of the fundamental $\ell_1$ PT obtained for our given $\alpha$, i.e. let $\beta_w$ be such that
\begin{equation}\label{eq:selvalc3nuA01}
  \xi_{\alpha}(\beta_w)=(1-\beta_w)\frac{\sqrt{\frac{2}{\pi}}e^{-(\erfinv(\frac{1-\alpha}{1-\beta_w}))^2}}{\alpha\sqrt{2}\erfinv (\frac{1-\alpha}{1-\beta_w})}=1.
\end{equation}
One should quickly note that unless $\alpha=\alpha_w$, $\beta_w$ is different from the given $\beta$ (i.e. different from the $\beta$ given in the pair $(\alpha,\beta)$). Now we set
\begin{equation}\label{eq:selvalc3nuA02}
  \nu=\sqrt{2}\erfinv \left (\frac{1-\alpha}{1-\beta_w}\right ).
\end{equation}
Further let $\beta_0$ be such that
\begin{equation}\label{eq:selvalc3nuA03}
  \frac{\alpha-\beta}{\alpha-\beta_0}\xi_{\alpha}(\beta_0)=\frac{\alpha-\beta}{\alpha-\beta_0}(1-\beta_0)\frac{\sqrt{\frac{2}{\pi}}e^{-(\erfinv(\frac{1-\alpha}{1-\beta_0}))^2}}{\alpha\sqrt{2}\erfinv (\frac{1-\alpha}{1-\beta_0})}=1.
\end{equation}
Then we set
\begin{equation}\label{eq:selvalc3nuA04}
  A_0=\frac{\erfinv \left (\frac{1-\alpha}{1-\beta_w}\right )}{\erfinv \left (\frac{1-\alpha}{1-\beta_0}\right )}=\frac{\nu}{\sqrt{2}\erfinv \left (\frac{1-\alpha}{1-\beta_0}\right )}.
\end{equation}
Finally combining (\ref{eq:detanalIeer12}) and (\ref{eq:selvalc3nuA04}) we obtain the value for $c_3$
\begin{equation}\label{eq:selvalc3nuA05}
  c_3=\frac{(1-A_0^2)\sqrt{\alpha}}{A_0}=\frac{\left (\erfinv \left (\frac{1-\alpha}{1-\beta_0}\right )\right )^2- \left (\erfinv\left (\frac{1-\alpha}{1-\beta_w}\right )\right )^2}{\erfinv \left (\frac{1-\alpha}{1-\beta_0}\right )\erfinv \left (\frac{1-\alpha}{1-\beta_w}\right )}\sqrt{\alpha}.
\end{equation}

\subsubsection{Rechecking the derivatives}
\label{sec:recheckder}

In this subsection we take the above selected values for $c_3$, $\nu$, and $A_0$ and recheck if they indeed ensure that the derivatives are equal to zero. That basically amounts to checking if the expressions on the right hand sides of (\ref{eq:detanalIeer4}), (\ref{eq:detanalIeer9}), and (\ref{eq:detanalIeer11}) are equal to zero if $\nu$, $A_0$, and $c_3$ are as in (\ref{eq:selvalc3nuA02}), (\ref{eq:selvalc3nuA04}), and (\ref{eq:selvalc3nuA05}), respectively and $\beta_w$ and $\beta_0$ are as in (\ref{eq:selvalc3nuA01}) and (\ref{eq:selvalc3nuA03}), respectively.

We first observe that (\ref{eq:detanalIeer9}) is trivially satisfied by the way how we chose $c_3$ and move to check (\ref{eq:detanalIeer4}). Combining (\ref{eq:detanalIeer4}), (\ref{eq:selvalc3nuA02}), and (\ref{eq:selvalc3nuA04}) we obtain
\begin{eqnarray}
\frac{d\zeta_{\alpha,\beta}(c_3,\nu,A_0)}{d\nu}&=& \frac{1-A_0^2}{w_1A_0^2}\left (\beta\nu\erf\left (\frac{\nu}{\sqrt{2}}\right )+\frac{\nu}{A_0}\frac{\erfc\left (\frac{\nu}{\sqrt{2}A_0}\right )}{e^{-\frac{(1-A_0^2)\nu^2}{2A_0^2}}}-(1-\beta)\sqrt{\frac{2}{\pi}}e^{-\frac{\nu^2}{2}}\right)\nonumber \\
&=&\frac{1-A_0^2}{w_1A_0^2}\left (\beta\sqrt{2}\erfinv \left (\frac{1-\alpha}{1-\beta_w}\right )\left (\frac{1-\alpha}{1-\beta_w}\right )
+\frac{\nu}{A_0}\frac{\erfc\left (\frac{\nu}{\sqrt{2}A_0}\right )}{e^{-\frac{(1-A_0^2)\nu^2}{2A_0^2}}}
-\frac{(1-\beta)\sqrt{2}\alpha\erfinv \left (\frac{1-\alpha}{1-\beta_w}\right )}{1-\beta_w}\right)\nonumber \\
&=&\frac{1-A_0^2}{w_1A_0^2}\left (
\frac{\nu}{A_0}\frac{\erfc\left (\frac{\nu}{\sqrt{2}A_0}\right )}{e^{-\frac{(1-A_0^2)\nu^2}{2A_0^2}}}
-\frac{(\alpha-\beta)\sqrt{2}\erfinv \left (\frac{1-\alpha}{1-\beta_w}\right )}{1-\beta_w}\right)\nonumber \\
&=&\frac{1-A_0^2}{w_1A_0^2e^{-\frac{(1-A_0^2)\nu^2}{2A_0^2}}}\left (
\frac{\nu}{A_0}\erfc\left (\frac{\nu}{\sqrt{2}A_0}\right )
-\frac{(\alpha-\beta)\sqrt{2}\erfinv \left (\frac{1-\alpha}{1-\beta_w}\right )}{1-\beta_w}e^{-\frac{(1-A_0^2)\nu^2}{2A_0^2}}\right)\nonumber \\
&=&\frac{1-A_0^2}{w_1A_0^2e^{-\frac{(1-A_0^2)\nu^2}{2A_0^2}}}\left (
\frac{\nu}{A_0}\erfc\left (\frac{\nu}{\sqrt{2}A_0}\right )
-\frac{(\alpha-\beta)\sqrt{\frac{2}{\pi}}e^{-\frac{\nu^2}{2}}}{\alpha}e^{-\frac{(1-A_0^2)\nu^2}{2A_0^2}}\right)\nonumber \\
&=&\frac{1-A_0^2}{\alpha w_1A_0^2e^{-\frac{(1-A_0^2)\nu^2}{2A_0^2}}}\left (
\frac{\alpha\nu}{A_0}\erfc\left (\frac{\nu}{\sqrt{2}A_0}\right )
-(\alpha-\beta)\sqrt{\frac{2}{\pi}}e^{-\frac{\nu^2}{2A_0^2}}\right)\nonumber \\
&=&\frac{1-A_0^2}{\alpha w_1A_0^2e^{-\frac{(1-A_0^2)\nu^2}{2A_0^2}}}\left (
\alpha\sqrt{2}\erfinv\left (\frac{1-\alpha}{1-\beta_0}\right )\left (\frac{\alpha-\beta_0}{1-\beta_0}\right )
-(\alpha-\beta)\sqrt{\frac{2}{\pi}}e^{-\left (\erfinv\left (\frac{1-\alpha}{1-\beta_0}\right )\right )^2}\right)\nonumber \\
& = & 0,\label{eq:recheck1}
\end{eqnarray}
where the second and fifth equality hold because of (\ref{eq:selvalc3nuA01}) and (\ref{eq:selvalc3nuA02}), the seventh equality holds because of (\ref{eq:selvalc3nuA04}), the last one because of (\ref{eq:selvalc3nuA03}), and the remaining ones follow through simple algebraic transformations. The above then confirms that the choice we made in (\ref{eq:selvalc3nuA01})-(\ref{eq:selvalc3nuA05}) indeed ensures that $\frac{d\zeta_{\alpha,\beta}(c_3,\nu,A_0)}{d\nu}=0$.

Now, to check the derivative with respect to $A_0$ we look at a combination of (\ref{eq:detanalIeer11}) and (\ref{eq:detanalIeer13}) to find
\begin{eqnarray}
\frac{d\zeta_{\alpha,\beta}(c_3,\nu,A_0)}{dA_0} & = &  (1-\beta)\frac{d\log{w_1}}{dA_0}+\frac{(\alpha-\beta) A_0^2-\beta\nu^2}{A_0^3}\nonumber \\
& = & -(1-\beta)\frac{e^{\frac{\nu^2}{2A_0^2}}(A_0^2+\nu^2)\erfc(\frac{\nu}{\sqrt{2}A_0})-\sqrt{\frac{2}{\pi}}A_0\nu}
{A_0^3(e^{\frac{\nu^2}{2A_0^2}}\erfc(\frac{\nu}{\sqrt{2}A_0})+A_0e^{\frac{\nu^2}{2}}\erf(\frac{\nu}{\sqrt{2}}))}+\frac{(\alpha-\beta) A_0^2-\beta\nu^2}{A_0^3}\nonumber \\
& = & -(1-\beta)\frac{(A_0^2+\nu^2)\frac{\alpha-\beta}{\alpha}\sqrt{\frac{2}{\pi}}\frac{A_0}{\nu}-\sqrt{\frac{2}{\pi}}A_0\nu}
{A_0^3(\frac{\alpha-\beta}{\alpha}\sqrt{\frac{2}{\pi}}\frac{A_0}{\nu}+A_0e^{\frac{\nu^2}{2}}\erf(\frac{\nu}{\sqrt{2}}))}+\frac{(\alpha-\beta) A_0^2-\beta\nu^2}{A_0^3}\nonumber \\
& = & -(1-\beta)\frac{(A_0^2+\nu^2)\frac{\alpha-\beta}{\alpha}\sqrt{\frac{2}{\pi}}\frac{A_0}{\nu}-\sqrt{\frac{2}{\pi}}A_0\nu}
{A_0^3(\frac{\alpha-\beta}{\alpha}\sqrt{\frac{2}{\pi}}\frac{A_0}{\nu}+\frac{1-\alpha}{\alpha}\sqrt{\frac{2}{\pi}}\frac{A_0}{\nu})}+\frac{(\alpha-\beta) A_0^2-\beta\nu^2}{A_0^3}\nonumber \\
& = & -\frac{(A_0^2+\nu^2)(\alpha-\beta)-\alpha\nu^2}
{A_0^3}
+\frac{(\alpha-\beta) A_0^2-\beta\nu^2}{A_0^3}\nonumber \\
& = & 0,\label{eq:recheck2}
\end{eqnarray}
where the third equality holds through a combination of equalities six and eight in (\ref{eq:recheck1}), the fourth equality follows because of (\ref{eq:selvalc3nuA01}) and (\ref{eq:selvalc3nuA02}), and the remaining ones follow through basic algebraic transformations. (\ref{eq:recheck2}) then confirms that the choice we made in (\ref{eq:selvalc3nuA01})-(\ref{eq:selvalc3nuA05}) indeed ensures that $\frac{d\zeta_{\alpha,\beta}(c_3,\nu,A_0)}{dA_0}=0$.

Having all the derivatives equal to zero ensures that the selection (\ref{eq:selvalc3nuA01})-(\ref{eq:selvalc3nuA05}) at the very least determines a stationary point of the underlying optimization problem. One can then proceed and check the second derivatives to ensure that the this selection in fact is the global optimum. That can be done both analytically and numerically. Analytical computations are more involved and we refrain from presenting them as they don't bring any novel ideas. Instead we will in the following sections prove that the choice (\ref{eq:selvalc3nuA01})-(\ref{eq:selvalc3nuA05}) is not only precisely the one that solves the optimization in (\ref{eq:detanalIeer2}) but also precisely the one that determines $I_{err}$. In the following subsection we will compute the value of $\zeta_{\alpha,\beta}(c_3,\nu,A_0)$ that one gets if $c_3$, $\nu$, and $A_0$ are as in (\ref{eq:selvalc3nuA01})-(\ref{eq:selvalc3nuA05}). As stated above and as will turn out later on, this  value of $\zeta_{\alpha,\beta}(c_3,\nu,A_0)$ will be precisely the $I_{err}$ from (\ref{eq:ldpasymp1}) and Theorem \ref{thm:ldp2}.

\subsubsection{Computing $\zeta_{\alpha,\beta}(c_3,\nu,A_0)$}
\label{sec:computingzeta}

Before computing $\zeta_{\alpha,\beta}(c_3,\nu,A_0)$ we will first compute all other quantities in (\ref{eq:detanalIeer3}), namely, $\widehat{\gamma}$, $I_{sph}$, $w_1$, and $w_2$. We start with $\widehat{\gamma}$ and to that end write
\begin{equation}\label{eq:computingzeta1}
  \widehat{\gamma}= \frac{c_3-\sqrt{(c_3)^2+4\alpha}}{4}=\frac{\frac{(1-A_0^2)\sqrt{\alpha}}{A_0}
-\sqrt{(\frac{(1-A_0^2)\sqrt{\alpha}}{A_0}
)^2+4\alpha}}{4}= \frac{\frac{(1-A_0^2)\sqrt{\alpha}}{A_0}
-\frac{(1-A_0^2)\sqrt{\alpha}}{A_0}}{4}=-\frac{A_0\sqrt{\alpha}}{2},
\end{equation}
where of course utilized $c_3$ from (\ref{eq:selvalc3nuA05}). In a similar fashion we then have for $I_{sph}$
\begin{equation}\label{eq:computingzeta2}
I_{sph} = \widehat{\gamma}c_3-\frac{\alpha }{2}\log\left (1-\frac{c_3}{2\widehat{\gamma}}\right )
=-\frac{(1-A_0^2)\alpha}{2}+\alpha\log(A_0).
\end{equation}
For $w_1$ we have
\begin{equation}\label{eq:computingzeta3}
w_1 =
  \frac{e^{\frac{(1-A_0^2)\nu^2}{2A_0^2}}}{A_0}\erfc\left (\frac{\nu}{\sqrt{2}A_0}\right )+\erf\left (\frac{\nu}{\sqrt{2}}\right )
  =\frac{\alpha-\beta}{\alpha\nu}\sqrt{\frac{2}{\pi}}e^{-\frac{\nu^2}{2}}+\erf\left (\frac{\nu}{\sqrt{2}}\right )
  =\frac{\alpha-\beta}{1-\beta_w}+\frac{1-\alpha}{1-\beta_w}=\frac{1-\beta}{1-\beta_w},
\end{equation}
where the second equality follows by a combination of the sixth and the eight equality in (\ref{eq:recheck1}) while the third equality follows by a combination of (\ref{eq:selvalc3nuA01}) and (\ref{eq:selvalc3nuA02}). Finally utilizing (\ref{eq:selvalc3nuA04}) and (\ref{eq:computingzeta3}) we have for $w_2$
\begin{equation}\label{eq:computingzeta4}
  w_2 =
  \frac{e^{\frac{(1-A_0^2)\nu^2}{2A_0^2}}}{A_0}=\frac{\alpha-\beta}{1-\beta_w}\frac{1}{\erfc\left (\frac{\nu}{\sqrt{2}A_0}\right )}
  =\frac{\alpha-\beta}{1-\beta_w}\frac{1}{1-\erf\left (\frac{\nu}{\sqrt{2}A_0}\right )}
  =\frac{\alpha-\beta}{1-\beta_w}\frac{1}{1-\frac{1-\alpha}{1-\beta_0}}
  =\frac{\alpha-\beta}{\alpha-\beta_0}\frac{1-\beta_0}{1-\beta_w}.
\end{equation}
A combination of (\ref{eq:computingzeta2}), (\ref{eq:computingzeta3}), and (\ref{eq:computingzeta4}) then gives
\begin{eqnarray}
\zeta_{\alpha,\beta}(c_3,\nu,A_0)&=&\left (-\frac{c_3^2}{2}+I_{sph}+(1-\beta)\log{w_1}+\beta\log{w_2}+\frac{c_3^2}{2(1-A_0^2)}\right )\nonumber \\
&=&\left (\frac{c_3^2A_0^2}{2(1-A_0^2)}-\frac{(1-A_0^2)\alpha}{2}+\alpha\log(A_0)+(1-\beta)\log{w_1}+\beta\log{w_2}\right )\nonumber \\
&=& \alpha\log(A_0)+(1-\beta)\log{w_1}+\beta\log{w_2}\nonumber \\
&=& \alpha\log\left (\frac{\erfinv\left (\frac{1-\alpha}{1-\beta_w}\right )}{\erfinv\left (\frac{1-\alpha}{1-\beta_0}\right )}\right )+(1-\beta)\log{\left (\frac{1-\beta}{1-\beta_w}\right )}+\beta\log{\left (\frac{\alpha-\beta}{\alpha-\beta_0}\frac{1-\beta_0}{1-\beta_w}\right )}.
\end{eqnarray}
We summarize the above results in the following theorem.
\begin{theorem}
Assume the setup of Theorem \ref{thm:ldp2} and assume that a pair $(\alpha,\beta)$  is given. Let $\alpha>\alpha_w$ where $\alpha_w$ is such that $\psi_\beta(\alpha_w)=\xi_{\alpha_w}(\beta)=1$. Also let $\beta_w$ satisfy the following \textbf{fundamental characterization of the $\ell_1$'s PT:}

\begin{equation}
\xi_{\alpha}(\beta_w)\triangleq
(1-\beta_w)\frac{\sqrt{\frac{2}{\pi}}e^{-\lp\erfinv\lp\frac{1-\alpha}{1-\beta_w}\rp\rp^2}}{\alpha_w\sqrt{2}\erfinv \lp\frac{1-\alpha}{1-\beta_w}\rp}=1.
\label{eq:thmldp3l1PT}
\end{equation}

\noindent Further let $\beta_0$ satisfy the following \textbf{fundamental characterization of the $\ell_1$'s LDP:}

\begin{equation}
\frac{\alpha-\beta}{\alpha-\beta_0}\xi_{\alpha}(\beta_0)=\frac{\alpha-\beta}{\alpha-\beta_0}
(1-\beta_0)\frac{\sqrt{\frac{2}{\pi}}e^{-\lp\erfinv\lp\frac{1-\alpha}{1-\beta_0}\rp\rp^2}}{\alpha\sqrt{2}\erfinv \lp\frac{1-\alpha}{1-\beta_0}\rp}=1.
\label{eq:thmldp3l1LDP}
\end{equation}

\noindent Then
\begin{equation}
I_{err}(\alpha,\beta)\triangleq\lim_{n\rightarrow\infty}\frac{\log{P_{err}}}{n}
\leq
\alpha\log\left (\frac{\erfinv\left (\frac{1-\alpha}{1-\beta_w}\right )}{\erfinv\left (\frac{1-\alpha}{1-\beta_0}\right )}\right )+(1-\beta)\log\left (\frac{1-\beta}{1-\beta_w}\right ) +\beta\log\left (\frac{(\alpha-\beta)(1-\beta_0)}{(\alpha-\beta_0)(1-\beta_w)}\right )\triangleq I_{ldp}(\alpha,\beta).
\label{eq:ldpthm3Ierrub1}
\end{equation}
Moreover, the following choice for $\nu$, $c_3$, and $\gamma$ in the optimization problem in Theorem \ref{thm:ldp2} achieves the right hand side of (\ref{eq:ldpthm3Ierrub1})
\begin{eqnarray}
  \nu &  = & \sqrt{2}\erfinv \left (\frac{1-\alpha}{1-\beta_w}\right )\nonumber \\
    A_0&=&\frac{\erfinv \left (\frac{1-\alpha}{1-\beta_w}\right )}{\erfinv \left (\frac{1-\alpha}{1-\beta_0}\right )}=\frac{\nu}{\sqrt{2}\erfinv \left (\frac{1-\alpha}{1-\beta_0}\right )}\nonumber \\
      c_3& = &\frac{(1-A_0^2)\sqrt{\alpha}}{A_0}=\frac{\left (\erfinv \left (\frac{1-\alpha}{1-\beta_0}\right )\right )^2- \left (\erfinv\left (\frac{1-\alpha}{1-\beta_w}\right )\right )^2}{\erfinv \left (\frac{1-\alpha}{1-\beta_0}\right )\erfinv \left (\frac{1-\alpha}{1-\beta_w}\right )}\sqrt{\alpha}\nonumber \\
      \gamma&=&\frac{c_3}{2(1-A_0^2)}=\frac{\sqrt{\alpha}}{2A_0}=\frac{\sqrt{\alpha}\erfinv \left (\frac{1-\alpha}{1-\beta_0}\right )}{2\erfinv \left (\frac{1-\alpha}{1-\beta_w}\right )}.
\label{eq:ldpthm3perrub2}
\end{eqnarray}\label{thm:ldp3}
\end{theorem}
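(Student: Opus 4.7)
The plan is to invoke Theorem \ref{thm:ldp2} and show that the triple specified in (\ref{eq:ldpthm3perrub2}) is feasible in the optimization defining $I_{err,u}^{(ub)}$ and attains the value $I_{ldp}(\alpha,\beta)$. Because Theorem \ref{thm:ldp2} gives $I_{err}(\alpha,\beta)\leq I_{err,u}^{(ub)}(\alpha,\beta)$ and the latter is a minimum over feasible $(c_3,\nu,A_0)$, the claimed bound $I_{err}(\alpha,\beta)\leq I_{ldp}(\alpha,\beta)$ follows from exhibiting any single feasible choice whose objective value equals $I_{ldp}(\alpha,\beta)$; a global minimality argument is not strictly required, although the stationarity verification of Section \ref{sec:recheckder} provides confirmation that the choice is in fact optimal.

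I would first establish feasibility. The hypothesis $\alpha>\alpha_w$, together with the monotonicity properties of $\xi_\alpha(\beta)$ established in Section \ref{sec:propxi1}, yields a unique $\beta_w\in(\beta,\alpha)$ solving (\ref{eq:thmldp3l1PT}). For (\ref{eq:thmldp3l1LDP}), evaluation at $\beta_0=\beta_w$ gives $(\alpha-\beta)/(\alpha-\beta_w)>1$, while the asymptotic $\xi_\alpha(\beta_0)\to 0$ as $\beta_0\to\alpha$ (controllable via the $\erfc$ bounds (\ref{eq:hdg10a})) eventually forces the product below $1$, producing by continuity a root $\beta_0\in(\beta_w,\alpha)$. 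This ordering $\beta<\beta_w<\beta_0<\alpha$ yields $\erfinv((1-\alpha)/(1-\beta_w))<\erfinv((1-\alpha)/(1-\beta_0))$, and hence $A_0\in(0,1)$, $\nu>0$, and $c_3>0$, confirming feasibility.

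Next I would evaluate $\zeta_{\alpha,\beta}$ at this triple, following Section \ref{sec:computingzeta}. The main algebraic moves are: (i) substituting $c_3=(1-A_0^2)\sqrt{\alpha}/A_0$ into $\widehat{\gamma}$ gives $\widehat{\gamma}=-A_0\sqrt{\alpha}/2$, whence $I_{sph}=-(1-A_0^2)\alpha/2+\alpha\log A_0$; (ii) the PT equation (\ref{eq:thmldp3l1PT}) collapses the combined $\erf$ and exponential prefactor in $w_1$ into the clean identity $w_1=(1-\beta)/(1-\beta_w)$; (iii) the LDP equation (\ref{eq:thmldp3l1LDP}) reduces $\erfc(\nu/(\sqrt{2}A_0))$ to $(\alpha-\beta_0)/(1-\beta_0)$, giving $w_2=(\alpha-\beta)(1-\beta_0)/((\alpha-\beta_0)(1-\beta_w))$; (iv) the $-c_3^2/2$ and $c_3^2/(2(1-A_0^2))$ contributions combine with the $-(1-A_0^2)\alpha/2$ term from $I_{sph}$ and cancel exactly once the expression for $c_3$ is substituted. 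Only the three logarithmic terms remain, and a final substitution of (\ref{eq:selvalc3nuA04}) recovers precisely $I_{ldp}(\alpha,\beta)$.

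The genuinely creative step, and the main obstacle, is identifying the correct ansatz (\ref{eq:selvalc3nuA02}) and (\ref{eq:selvalc3nuA04}) together with the associated defining equation (\ref{eq:thmldp3l1LDP}) for $\beta_0$. I would motivate these by solving the stationarity system of Section \ref{sec:derivativeszeta} in order: (\ref{eq:detanalIeer9}) forces (\ref{eq:detanalIeer12}), i.e.\ $c_3$ in terms of $A_0$; substituting the PT-inspired ansatz $\nu=\sqrt{2}\erfinv((1-\alpha)/(1-\beta_w))$ into $\partial_\nu\zeta=0$ then reduces, through the chain of cancellations worked out in (\ref{eq:recheck1}), to precisely the LDP equation (\ref{eq:thmldp3l1LDP}) under the parameterization (\ref{eq:selvalc3nuA04}); the remaining condition $\partial_{A_0}\zeta=0$ is then automatically satisfied, as shown by (\ref{eq:recheck2}). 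Once this matching between the stationarity system and the PT/LDP characterizations is recognized, the proof reduces to the algebraic verification outlined above. Global optimality can be further corroborated by a second-order analysis of $\zeta_{\alpha,\beta}$ at the stationary point, but is not needed for the upper-bound claim of the theorem.
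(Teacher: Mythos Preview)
Your proposal is correct and follows essentially the same approach as the paper: you invoke Theorem \ref{thm:ldp2}, plug in the specific triple (\ref{eq:ldpthm3perrub2}), and evaluate $\zeta_{\alpha,\beta}$ at that point exactly as in Section \ref{sec:computingzeta}, with the stationarity checks of Section \ref{sec:recheckder} serving as motivation for the ansatz. Your explicit feasibility discussion (the ordering $\beta<\beta_w<\beta_0<\alpha$ and the resulting signs of $A_0,c_3,\nu$) and your observation that only an upper bound is claimed---so any feasible point suffices and global optimality is unnecessary---are in fact slightly more careful than the paper's own ``follows from the above discussion.''
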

\begin{proof} Follows from the above discussion.
\end{proof}
We will present the results that one can obtain based on the above theorem later on when we complement them with the lower tail estimate in the following subsection and eventually connect them to the high-dimensional geometry. At that time we will also find it useful to look at a couple of properties of the function $\frac{\alpha-\beta}{\alpha-\beta_0}\xi_{\alpha}(\beta_0)$.

\subsection{Lower tail}
\label{sec:lowertail}

%

In this section we focus on the lower tail type of large deviations. In the previous sections we focused on the scenario where the number of equations in the original linear system is larger than the critical breaking point. In such a scenario the probability of error is expected to be small and the LDP that we introduced and discussed essentially describes that. Contrary to that, in the scenario that we will focus on in this section, the number of equations will be assumed to be smaller than the what the $\ell_1$'s PT predicts for the threshold (breaking point). Consequently, one would expect that the probability of $\ell_1$ working correctly will be small. The LDP that we will discuss below will mathematically describe and confirm that. To that end we introduce a quantity complementary to the $P_{err}$ considered in the previous sections. We denote it by $P_{cor}$ and set
\begin{equation}
P_{cor} \triangleq P(\min_{A\w=0,\|\w\|_2\leq 1}\sum_{i=n-k+1}^n \w_i+\sum_{i=1}^{n-k}|\w_{i}|\geq 0).
\label{eq:ldpproblower}
\end{equation}
Clearly, $P_{cor}=1-P_{err}$ is the probability that (\ref{eq:l1}) does produce the solution of (\ref{eq:system}), i.e. correctly solves the original linear system. Utilizing the machinery of \cite{StojnicBlockasymldpfinn15} we then have
\begin{equation}
P_{cor}
\leq
P(\|\g\|_2 -w(\h,S_w)-t_1\geq 0)/P(g\geq t_1)
\leq  \min_{t_1}\min_{c_3\geq 0}
Ee^{c_3\|\g\|_2}Ee^{-c_3w(\h,S_w)}e^{-c_3t_1}/P(g\geq t_1).
\label{eq:ldpprob3lower}
\end{equation}
The above bound is valid for any integers $m$, $k$, and $n$ (of course, as earlier, assuming $k\leq m\leq n$ so that the results make sense) and one can then establish theorems/results similar to the ones in the previous section. Instead of repeating all of them we will just focus on the LDP ones.
To that end we define a quantity analogous to $I_{err}$, namely $I_{cor}$, in the following way
\begin{equation}\label{eq:ldpasymp1lower}
  I_{cor}\triangleq\lim_{n\rightarrow\infty}\frac{\log{P_{cor}}}{n},
\end{equation}
and below establish the lower tail analogue to Theorem \ref{thm:ldp2}.
\begin{theorem}
Assume the setup of Theorem \ref{thm:ldp2}. Then
\begin{equation}
I_{cor}\triangleq\lim_{n\rightarrow\infty}\frac{\log{P_{cor}}}{n}
\leq \min_{c_3\geq 0}\left (-\frac{c_3^2}{2}+I_{sph}^++\max_{\nu\geq 0,\gamma^{(s)}\geq 0} ((1-\beta_w)\log{w_1}+\beta_w\log{w_2}+\beta_w\log{w_3}-c_3\gamma)\right )\triangleq I_{cor,l}^{(ub)},
\label{eq:ldpthm2Icorub1}
\end{equation}
where
\begin{eqnarray}
I_{sph}^+ &=& \widehat{\gamma_+}c_3-\frac{\alpha d}{2}\log\left (1-\frac{c_3}{2\widehat{\gamma_+}}\right )\nonumber \\
  \widehat{\gamma_+} &=& \frac{2c_3+\sqrt{4c_3^2+16\alpha }}{8}\nonumber \\
w_1 &=& \frac{1}{\sqrt{2\pi}}\int_{\bar{h}}e^{-\bar{h}^2/2}e^{-c_3\max(|\bar{h}|-\nu,0)^2/4/\gamma}d\bar{h}
  =\frac{e^{\frac{-c_3\nu^2/4/\gamma}{1+c_3/2/\gamma}}}{\sqrt{1+c_3/2/\gamma}}\erfc\left (\frac{\nu}{\sqrt{2}\sqrt{1+c_3/2/\gamma}}\right )+\erf\left (\frac{\nu}{\sqrt{2}}\right )\nonumber \\
  w_2 &=& \frac{1}{\sqrt{2\pi}}\int_{\bar{h}}e^{-\bar{h}^2/2}e^{-c_3(\bar{h}+\nu)^2/4/\gamma}d\bar{h}
  =\frac{e^{\frac{-c_3\nu^2/4/\gamma}{1+c_3/2/\gamma}}}{\sqrt{1+c_3/2/\gamma}}.\label{eq:ldpthm2perrub2lower}
\end{eqnarray}\label{thm:ldp2lower}
\end{theorem}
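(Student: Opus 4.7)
The plan is to parallel the proof of Theorem \ref{thm:ldp2}, starting from the Chernoff-type inequality (\ref{eq:ldpprob3lower}) instead of (\ref{eq:ldpprob3}), and then tracking the sign changes carefully through the machinery of \cite{StojnicBlockasymldpfinn15}. First I would take logarithms of both sides of (\ref{eq:ldpprob3lower}), divide by $n$, and let $n\to\infty$ under the usual scaling $c_3\to c_3\sqrt{n}$, $\gamma\to\gamma\sqrt{n}$, $t_1\to t_1\sqrt{n}$. The resulting expression naturally decouples into three pieces: (i) a spherical contribution $\frac{1}{n}\log E e^{c_3\sqrt{n}\|\g\|_2}$, (ii) a Gaussian-integral contribution $\frac{1}{n}\log E e^{-c_3\sqrt{n}w(\h,\Sw)}$, and (iii) a Gaussian-tail term $\frac{1}{n}\log P(g\geq t_1\sqrt{n})$ together with the Chernoff shift $-c_3 t_1$.

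For piece (i), the exponential rate $I_{sph}^+$ is the sign-flipped analogue of $I_{sph}$ computed in \cite{StojnicMoreSophHopBnds10}; the same saddle-point/Laplace calculation goes through but now selects the positive branch of the quadratic, producing $\widehat{\gamma_+} = (2c_3+\sqrt{4c_3^2+16\alpha})/8$ and the formula stated in the theorem. For piece (ii), I would use the LP representation (\ref{eq:ldpwhSw}) of $w(\h,\Sw)$ as a $\min$ over $\nu\geq 0$, combined with the Lagrangian identity $\sqrt{g}=\min_{\gamma\geq 0}(g/(4\gamma)+\gamma)$, to rewrite $e^{-c_3 w(\h,\Sw)}$ in a form that factorizes across the independent coordinates of $\hw$. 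By independence of $\hw_1,\dots,\hw_n$, the expectation splits into a product $w_1^{n-k}w_2^{k}$; the sign flip $c_3\to -c_3$ relative to Theorem \ref{thm:ldp2} turns the $(1-c_3/(2\gamma))$ factors into $(1+c_3/(2\gamma))$, reproducing the explicit $w_1$, $w_2$ recorded in the statement. Finally, optimizing over $t_1$ in the Gaussian tail $P(g\geq t_1\sqrt{n})\sim e^{-t_1^2 n/2}$, together with the $-c_3 t_1$ Chernoff shift, yields the $-c_3^2/2$ contribution.

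The main obstacle is the asymptotic exchange of $\max$ and expectation in piece (ii). In the upper-tail setting of Theorem \ref{thm:ldp2}, $w(\h,\Sw)$ is expressed as a $\min$, and Chernoff directly controls $E e^{+c_3 w}$ by choosing deterministic $(\nu,\gamma)$ inside the expectation. In the lower-tail setting we instead face $E e^{-c_3 w} = E\sup_{\nu}e^{-c_3 f(\nu,\h)}$, whose naive inequality $\sup_{\nu}E e^{-c_3 f}\leq E e^{-c_3 w}$ goes in the wrong direction for an upper bound on $P_{cor}$. This is precisely why the variational structure in (\ref{eq:ldpthm2Icorub1}) features an outer $\max_{\nu,\gamma}$ rather than an inner $\min$: the correct asymptotic treatment exploits the fact that, under the scaling $c_3,\gamma\to c_3\sqrt{n},\gamma\sqrt{n}$, the optimal pair $(\nu^\ast,\gamma^\ast)$ concentrates sharply, so that the deterministic saddle-point choice becomes tight up to negligible lower-order corrections. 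Making this concentration-based exchange rigorous, and then verifying that the resulting stationary point is indeed the global optimum (in the spirit of what was done explicitly for the upper tail in Section \ref{sec:recheckder}), is the main technical content that must be supplied to complete the proof.
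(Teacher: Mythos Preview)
Your proposal is correct and follows essentially the same approach as the paper, which simply states that the result ``follows from the above considerations, what was presented in \cite{StojnicBlockasymldpfinn15}, and by noting that in \cite{StojnicMoreSophHopBnds10} we established'' the formula for $I_{sph}^+$. Your identification of the max/expectation exchange as the central technical issue---and the reason the inner optimization in (\ref{eq:ldpthm2Icorub1}) is a $\max$ rather than a $\min$---is exactly the point, and the concentration-of-the-optimizer argument you sketch is precisely what the machinery of \cite{StojnicBlockasymldpfinn15} supplies; the paper itself gives no further detail beyond the citation.
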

\begin{proof} Follows from the above considerations, what was presented in \cite{StojnicBlockasymldpfinn15}, and by noting that in \cite{StojnicMoreSophHopBnds10} we established
\begin{equation}
I_{sph}^+=\lim_{n\rightarrow\infty}\frac{1}{n}\log(Ee^{c_3\sqrt{n}\|\g\|_2})
=\widehat{\gamma_+} c_3-\frac{\alpha}{2}\log\left (1-\frac{c_3}{2\widehat{\gamma_+}}\right ),\label{eq:gamaiden2liftlowe}
\end{equation}
where
\begin{equation}
\widehat{\gamma_+}=\frac{2c_3+\sqrt{4c_3^2+16\alpha}}{8}.\label{eq:gamaiden3liftlower}
\end{equation}
\end{proof}
One can then proceed as in Section \ref{sec:analysisIerr}, albeit in a much faster fashion, as we will see below.

\subsection{A detailed analysis of $I_{cor,l}^{(ub)}$}
\label{sec:analysisIcor}

After noting the change $c_3\rightarrow -c_3$ we again have as in Section \ref{sec:analysisIerr}
\begin{equation}\label{eq:detanalIcor1}
  A_{0}\triangleq\sqrt{1-\frac{c_3}{2\gamma}}.
\end{equation}
and
\begin{equation}\label{eq:detanalIcor2}
I_{cor,l}^{(ub)}(\alpha,\beta)\triangleq \min_{c_3\leq 0}\max_{\nu\geq 0,A_0\leq 1}\zeta_{\alpha,\beta}(c_3,\nu,A_0)
\end{equation}
where
\begin{eqnarray}
\zeta_{\alpha,\beta}(c_3,\nu,A_0)&=&\left (-\frac{c_3^2}{2}+I_{sph}^++(1-\beta)\log{w_1}+\beta\log{w_2}+\frac{c_3^2}{2(1-A_0^2)}\right )\nonumber \\
I_{sph}^+ &=& -\widehat{\gamma^+}c_3-\frac{\alpha }{2}\log\left (1+\frac{c_3}{2\widehat{\gamma^+}}\right )\nonumber \\
  \widehat{\gamma^+} &=& \frac{-c_3+\sqrt{c_3^2+4\alpha}}{4}=-\widehat{\gamma}\nonumber \\
w_1 &=&
  \frac{e^{\frac{(1-A_0^2)\nu^2}{2A_0^2}}}{A_0}\erfc\left (\frac{\nu}{\sqrt{2}A_0}\right )+\erf\left (\frac{\nu}{\sqrt{2}}\right )\nonumber \\
  w_2 &=&
  \frac{e^{\frac{(1-A_0^2)\nu^2}{2A_0^2}}}{A_0}.\label{eq:detanalIcor3}
\end{eqnarray}
One now observes that $\zeta_{\alpha,\beta}(c_3,\nu,A_0)$ defined in (\ref{eq:detanalIcor3}) is exactly the same as the corresponding one in (\ref{eq:detanalIeer3}). That means that one can proceed with computation of all the derivatives as earlier and all the results will be the same. Consequently, the selected values for $c_3$, $\nu$, $\gamma$, and $A_0$ will have the same form. Instead of repeating all these calculations we summarize them in the following theorem, essentially a lower tail analogue of Theorem \ref{thm:ldp3}.
\begin{theorem}
Assume the setup of Theorem \ref{thm:ldp3} and assume that a pair $(\alpha,\beta)$  is given. Differently from Theorem \ref{thm:ldp3}, let $\alpha<\alpha_w$ where $\alpha_w$ is such that $\psi_\beta(\alpha_w)=\xi_{\alpha_w}(\beta)=1$. Also let $\beta_w$ and $\beta_0$ satisfy the \textbf{fundamental $\ell_1$'s PT and LDP characterizations}, respectively as in Theorem \ref{thm:ldp3}. Then choosing $\nu$, $c_3$, and $\gamma$ in the optimization problem in (\ref{eq:ldpthm2Icorub1}) as $\nu$, $-c_3$, and $\gamma$  from Theorem \ref{thm:ldp3} (or equivalently, chooosing $\nu$, $c_3$, and $A_0$ in the optimization problem in (\ref{eq:detanalIcor2}) as $\nu$, $c_3$, and $A_0$  from Theorem \ref{thm:ldp3}) gives
\begin{equation}
\zeta_{\alpha,\beta}(c_3,\nu,A_0)=
\alpha\log\left (\frac{\erfinv\left (\frac{1-\alpha}{1-\beta_w}\right )}{\erfinv\left (\frac{1-\alpha}{1-\beta_0}\right )}\right )+(1-\beta)\log\left (\frac{1-\beta}{1-\beta_w}\right ) +\beta\log\left (\frac{(\alpha-\beta)(1-\beta_0)}{(\alpha-\beta_0)(1-\beta_w)}\right )=I_{ldp}(\alpha,\beta).
\label{eq:ldpthm3Icorub1}
\end{equation}
\label{thm:ldp3lower}
\end{theorem}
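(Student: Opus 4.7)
The plan is to exploit a structural identity between the lower tail objective $\zeta_{\alpha,\beta}(c_3,\nu,A_0)$ in (\ref{eq:detanalIcor3}) and its upper tail counterpart in (\ref{eq:detanalIeer3}). After the reparametrization $c_3\to -c_3$ that sends (\ref{eq:ldpthm2Icorub1}) to (\ref{eq:detanalIcor2}), the two objectives are literally the same function of $(c_3,\nu,A_0)$: a direct check shows $\widehat{\gamma^+}=-\widehat{\gamma}$, so the post-substitution $I_{sph}^+$ takes the same closed form as $I_{sph}$, while $w_1$ and $w_2$ already have identical expressions. Consequently, the algebraic machinery of Sections \ref{sec:derivativeszeta} and \ref{sec:computingzeta} transfers verbatim and only the final evaluation needs to be written down.

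Concretely, I would substitute the prescribed values $\nu=\sqrt{2}\erfinv((1-\alpha)/(1-\beta_w))$, $A_0=\erfinv((1-\alpha)/(1-\beta_w))/\erfinv((1-\alpha)/(1-\beta_0))$, and $c_3=(1-A_0^2)\sqrt{\alpha}/A_0$ into (\ref{eq:detanalIcor3}). In the lower tail regime $\alpha<\alpha_w$ one has $\beta>\beta_w$, and the relevant branch of the LDP characterization (\ref{eq:thmldp3l1LDP}) gives $\beta_0<\beta_w$, which forces $A_0>1$ and $c_3<0$. The identity $c_3^2+4\alpha=(1+A_0^2)^2\alpha/A_0^2$ still produces $\widehat{\gamma^+}=A_0\sqrt{\alpha}/2>0$, and the same manipulation as in (\ref{eq:computingzeta2}) yields $I_{sph}^+=-(1-A_0^2)\alpha/2+\alpha\log A_0$. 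Likewise, (\ref{eq:computingzeta3}) and (\ref{eq:computingzeta4}) carry over without modification to give $w_1=(1-\beta)/(1-\beta_w)$ and $w_2=(\alpha-\beta)(1-\beta_0)/((\alpha-\beta_0)(1-\beta_w))$, where the PT identity (\ref{eq:thmldp3l1PT}) for $\beta_w$ and the LDP identity (\ref{eq:thmldp3l1LDP}) for $\beta_0$ are invoked at the $\erf$ and $\erfc$ steps exactly as in (\ref{eq:recheck1}).

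Assembling these pieces, the quadratic terms in $c_3$ collapse via $c_3^2 A_0^2/(2(1-A_0^2))+I_{sph}^+=\alpha\log A_0$ (immediate once $c_3^2=(1-A_0^2)^2\alpha/A_0^2$ is used), leaving $\zeta_{\alpha,\beta}(c_3,\nu,A_0)=\alpha\log A_0+(1-\beta)\log w_1+\beta\log w_2$, which upon substitution is precisely the formula (\ref{eq:ldpthm3Icorub1}) for $I_{ldp}(\alpha,\beta)$. The only delicate bookkeeping, and what I would expect to be the one nontrivial point, is keeping straight the sign of $c_3$ and the direction of the inequality on $A_0$ across the PT: both flip between the upper and lower tail regimes, which is why the feasibility constraints in (\ref{eq:detanalIcor2}) differ from those in (\ref{eq:detanalIeer2}) and why the lower tail problem is a min--max rather than a triple minimization. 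Since the theorem asserts only that the prescribed triple evaluates to $I_{ldp}$ and does not claim global optimality in (\ref{eq:detanalIcor2}), no saddle-point verification is required at this stage; that is deferred to the later matching with the high-dimensional geometry rate function.
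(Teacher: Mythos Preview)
Your proposal is correct and follows essentially the same approach as the paper: after the sign change $c_3\to -c_3$, the lower-tail objective $\zeta_{\alpha,\beta}$ in (\ref{eq:detanalIcor3}) coincides with the upper-tail one in (\ref{eq:detanalIeer3}), so the evaluations (\ref{eq:computingzeta1})--(\ref{eq:computingzeta4}) transfer verbatim to produce $I_{ldp}(\alpha,\beta)$. Your observation that in this regime $\beta_0<\beta_w$ (hence $A_0>1$, $c_3<0$) is consistent with the paper's numerics in Table~\ref{tab:Ildptab1}; the text immediately after Theorem~\ref{thm:ldp3lower} in the paper actually states the opposite inequality, which appears to be a typo.
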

\begin{proof} Follows from the considerations leading to Theorem \ref{thm:ldp3}.
\end{proof}
We should also add that in the scenario of interest in Theorem \ref{thm:ldp3lower}, i.e. for $\alpha<\alpha_w$, one has $\beta_w<\beta_0$ which implies $A_0>1$ and ultimately $c_3<0$. Of course, exactly opposite happens in the scenario of interest in Theorem \ref{thm:ldp3}.

\subsection{High-dimensional geometry}
\label{sec:hdg}

In this section we look at an alternative way to characterize the performance of $\ell_1$. It utilizes some of the basic concepts from the high-dimensional integral geometry and we will assume a solid degree of familiarity with these. We will assume that we are given a pair $(\alpha,\beta)$ and that $\beta_w$ and $\beta_0$ are given by the $\ell_1$ fundamental PT and LDP characterizations defined earlier. Also, we assume the upper tail regimes, i.e. $\alpha>\alpha_w$ (where $\alpha_w$ is such that $\psi_\beta(\alpha_w)=\xi_{\alpha_w}(\beta)=1$) and start with the following collection of results established in \cite{hdg}.
\begin{equation}\label{eq:hdg1}
  \Psi_{net}(\alpha,\beta)=I_{err}(\alpha,\beta)\triangleq\lim_{n\rightarrow\infty}\frac{\log{P_{err}}}{n}=\psicom+\psiint-\psiext,
\end{equation}
where
\begin{eqnarray}
\psicom & = & (\alpha-\beta) \log(2)-(\alpha-\beta)\log\left (\frac{\alpha-\beta}{1-\beta}\right )-(1-\alpha)\log\left (\frac{1-\alpha}{1-\beta}\right )\nonumber \\
\psiint & = & \min_{y\geq 0} (\alpha y^2 +(\alpha-\beta)\log(\erfc(y)))- (\alpha-\beta) \log(2)\nonumber\\
\psiext & = & \min_{y\geq 0} (\alpha y^2 -(1-\alpha)\log(\erf(y))). \label{eq:hdg2}
\end{eqnarray}
Let $y_{int}$ and $y_{ext}$ be the solutions of the above optimizations (clearly, $y_{int}$ is the solution of the optimization associated with $\psiint$ and $y_{ext}$ is the solution of the optimization associated with $\psiext$). To determine $y_{int}$ we start by taking the following derivative
\begin{equation}
\frac{d(\alpha y^2 +(\alpha-\beta)\log(\erfc(y)))}{dy}  =  2\alpha y+\frac{\alpha-\beta}{\erfc(y)}\frac{d\erfc(y)}{dy}= 2\alpha y-\frac{\alpha-\beta}{1-\erf(y)}\frac{2e^{-y^2}}{\sqrt{\pi}}. \label{eq:hdg3}
\end{equation}
Choosing
\begin{equation}\label{eq:hdg4}
y_{int}  =  \erfinv\left (\frac{1-\alpha}{1-\beta_0}\right ),
\end{equation}
the derivative in (\ref{eq:hdg3}) becomes
\begin{eqnarray}
\frac{d(\alpha y^2 +(\alpha-\beta)\log(\erfc(y)))}{dy} | _{y=y_{int}}  & = &   2\alpha y_{ext}-\frac{\alpha-\beta}{1-\erf(y_{int})}\frac{2e^{-y_{int}^2}}{\sqrt{\pi}}\nonumber \\
& = &2\alpha \erfinv\left (\frac{1-\alpha}{1-\beta_0}\right )-\frac{\alpha-\beta}{\alpha-\beta_0}(1-\beta_0)\frac{2e^{-\left (\erfinv\left (\frac{1-\alpha}{1-\beta_0}\right )\right )^2}}{\sqrt{\pi}}\nonumber \\
& = &2\alpha \erfinv\left (\frac{1-\alpha}{1-\beta_0}\right )\left (1-\frac{\alpha-\beta}{\alpha-\beta_0}(1-\beta_0)\frac{\sqrt{\frac{2}{\pi}}e^{-\left (\erfinv\left (\frac{1-\alpha}{1-\beta_0}\right )\right )^2}}{\sqrt{2}\alpha \erfinv\left (\frac{1-\alpha}{1-\beta_0}\right )}\right )\nonumber \\
& = &2\alpha \erfinv\left (\frac{1-\alpha}{1-\beta_0}\right )\left (1-\frac{\alpha-\beta}{\alpha-\beta_0}\xi_\alpha(\beta_0)\right )\nonumber \\
& = &0, \label{eq:hdg5}
\end{eqnarray}
where the last equality follows by the fundamental characterization of $\ell_1$'s LDP. Continuing further we obtain for the second derivative
\begin{equation}
\frac{d^2(\alpha y^2 +(\alpha-\beta)\log(\erfc(y)))}{dy^2}   =    2\alpha -\frac{\alpha-\beta}{\erfc(y)^2}\left (\frac{2e^{-y^2}}{\sqrt{\pi}}\right )^2+\frac{\alpha-\beta}{\erfc(y)}\left (\frac{4ye^{-y^2}}{\sqrt{\pi}}\right ). \label{eq:hdg6}
\end{equation}
One also has
\begin{equation}
 -\frac{\alpha-\beta}{\erfc(y)^2}\left (\frac{2e^{-y^2}}{\sqrt{\pi}}\right )^2+\frac{\alpha-\beta}{\erfc(y)}\left (\frac{4ye^{-y^2}}{\sqrt{\pi}}\right )
  =
-\frac{\alpha-\beta}{\erfc(y)}\left (\frac{4ye^{-y^2}}{\sqrt{\pi}}\right )\left (\frac{e^{-y^2}}{\sqrt{\pi}y\erfc(y)}-1 \right )<0, \label{eq:hdg7}
\end{equation}
where the last equality follows by $\alpha>\beta$ and a combination of (\ref{eq:propxi33}), (\ref{eq:propxi34}), and (\ref{eq:propxi35}). Moreover
\begin{equation}
 -\frac{\alpha-\beta}{\erfc(y)^2}\left (\frac{2e^{-y^2}}{\sqrt{\pi}}\right )^2+\frac{\alpha-\beta}{\erfc(y)}\left (\frac{4ye^{-y^2}}{\sqrt{\pi}}\right )
  >
 -\frac{\alpha}{\erfc(y)^2}\left (\frac{2e^{-y^2}}{\sqrt{\pi}}\right )^2+\frac{\alpha}{\erfc(y)}\left (\frac{4ye^{-y^2}}{\sqrt{\pi}}\right ). \label{eq:hdg8}
\end{equation}
Combining further (\ref{eq:hdg6}) and (\ref{eq:hdg8}) we obtain
\begin{eqnarray}
\frac{d^2(\alpha y^2 +(\alpha-\beta)\log(\erfc(y)))}{dy^2}  & = &   2\alpha -\frac{\alpha-\beta}{\erfc(y)^2}\left (\frac{2e^{-y^2}}{\sqrt{\pi}}\right )^2+\frac{\alpha-\beta}{\erfc(y)}\left (\frac{4ye^{-y^2}}{\sqrt{\pi}}\right )\nonumber \\
& > &   2\alpha -\frac{\alpha}{\erfc(y)^2}\left (\frac{2e^{-y^2}}{\sqrt{\pi}}\right )^2+\frac{\alpha}{\erfc(y)}\left (\frac{4ye^{-y^2}}{\sqrt{\pi}}\right )\nonumber \\
& = &   \frac{2\alpha e^{-2y^2}}{\pi\erfc(y)^2}\left (\pi\erfc(y)^2e^{2y^2} -2+2 y \sqrt{\pi}\erfc(y)e^{y^2}\right ). \label{eq:hdg9}
\end{eqnarray}
We also recall on the following inequalities for $\erfc(\cdot)$ introduced earlier in Section \ref{sec:propxi}.
\begin{equation}
\frac{2}{\sqrt{\pi}}\frac{e^{-y^2}}{y+\sqrt{y^2+2}}< \erfc(y)\leq \frac{2}{\sqrt{\pi}}\frac{e^{-y^2}}{y+\sqrt{y^2+\frac{4}{\pi}}}.
 \label{eq:hdg10}
\end{equation}
Using (\ref{eq:hdg10}) we further obtain from (\ref{eq:hdg9})
\begin{eqnarray}
\frac{d^2(\alpha y^2 +(\alpha-\beta)\log(\erfc(y)))}{dy^2}  & > &      \frac{2\alpha e^{-2y^2}}{\pi\erfc(y)^2}\left (\pi\erfc(y)^2e^{2y^2} -2+2 y \sqrt{\pi}\erfc(y)e^{y^2}\right )\nonumber \\
 & > &      \frac{2\alpha e^{-2y^2}}{\pi\erfc(y)^2}\left (\frac{4}{(y+\sqrt{y^2+2})^2} -2+\frac{4y}{y+\sqrt{y^2+2}}\right )\nonumber \\
 & = &      \frac{2\alpha e^{-2y^2}}{\pi\erfc(y)^2}\left (\frac{4-2(y+\sqrt{y^2+2})^2+4y(y+\sqrt{y^2+2})}{(y+\sqrt{y^2+2})^2}\right )\nonumber \\
& = & 0. \label{eq:hdg11}
\end{eqnarray}
The above then means that $(\alpha y^2 +(\alpha-\beta)\log(\erfc(y)))$ is convex and that $y_{int}$ is not only its a local but also its a global optimum (minimum) as well. A combination of (\ref{eq:hdg2}) and (\ref{eq:hdg4}) together with $\ell_1$'s fundamental LDP then finally gives
\begin{eqnarray}\label{eq:hdg12}
\psiint & = &  \alpha y_{int}^2 +(\alpha-\beta)\log(\erfc(y_{int})- (\alpha-\beta) \log(2) \nonumber \\
 & = &   \alpha \left (\erfinv\left (\frac{1-\alpha}{1-\beta_0}\right )
\right )^2 +(\alpha-\beta)\log\left (\frac{\alpha-\beta_0}{1-\beta_0}\right )- (\alpha-\beta) \log(2)\nonumber \\
& = & \alpha \log\left (e^{\left (\erfinv\left (\frac{1-\alpha}{1-\beta_0}\right )
\right )^2}\right ) +(\alpha-\beta)\log\left (\frac{\alpha-\beta_0}{1-\beta_0}\right )- (\alpha-\beta) \log(2)\nonumber \\
& = & \alpha \log\left (\frac{\alpha-\beta}{\alpha-\beta_0}\frac{1-\beta_0}{\alpha\sqrt{2}\erfinv\left (\frac{1-\alpha}{1-\beta_0}\right )}\right ) +(\alpha-\beta)\log\left (\frac{\alpha-\beta_0}{1-\beta_0}\right )- (\alpha-\beta) \log(2)+\alpha\log\lp\sqrt{\frac{2}{\pi}}\rp\nonumber \\
& = & -\alpha \log\left (\sqrt{2}\erfinv\left (\frac{1-\alpha}{1-\beta_0}\right )\right )+\alpha\log\left (\frac{\alpha-\beta}{\alpha}\right )
-\beta\log\left (\frac{\alpha-\beta_0}{1-\beta_0}\right )- (\alpha-\beta) \log(2)+\alpha\log\lp\sqrt{\frac{2}{\pi}}\rp\nonumber \\
\end{eqnarray}
Using the $\ell_1$'s fundamental PT and the definition of $\beta_w$ one can then further utilize the results of \cite{StojnicEquiv10} to obtain
\begin{eqnarray}\label{eq:hdg13}
\psiext  & = &   \min_{y\geq 0} (\alpha y^2 -(1-\alpha)\log(\erf(y)))\nonumber \\
& = &\alpha\left (\erfinv\left (\frac{1-\alpha}{1-\beta_w}\right )\right )^2-(1-\alpha)\log\left (\frac{1-\alpha}{1-\beta_w}\right )\nonumber \\
& = &\alpha\log\left (e^{\left (\erfinv\left (\frac{1-\alpha}{1-\beta_w}\right )\right )^2}\right )-(1-\alpha)\log\left (\frac{1-\alpha}{1-\beta_w}\right )\nonumber \\
& = &\alpha \log\left (\frac{1-\beta_w}{\alpha\sqrt{2}\erfinv\left (\frac{1-\alpha}{1-\beta_w}\right )}\right )-(1-\alpha)\log\left (\frac{1-\alpha}{1-\beta_w}\right )+\alpha\log\lp\sqrt{\frac{2}{\pi}}\rp\nonumber \\
& = &-\alpha \log\left (\sqrt{2}\erfinv\left (\frac{1-\alpha}{1-\beta_w}\right )\right )+\alpha \log\left (\frac{1-\beta_w}{\alpha}\right )-(1-\alpha)\log\left (\frac{1-\alpha}{1-\beta_w}\right )+\alpha\log\lp\sqrt{\frac{2}{\pi}}\rp\nonumber \\
& = &-\alpha \log\left (\sqrt{2}\erfinv\left (\frac{1-\alpha}{1-\beta_w}\right )\right )-\alpha \log(\alpha)-(1-\alpha)\log (1-\alpha)+\log(1-\beta_w)
+\alpha\log\lp\sqrt{\frac{2}{\pi}}\rp.\nonumber \\
\end{eqnarray}
Finally one can combine (\ref{eq:hdg1}), (\ref{eq:hdg12}), and (\ref{eq:hdg13}) to obtain
\begin{eqnarray}
  \Psi_{net}(\alpha,\beta)&=&I_{err}(\alpha,\beta)=\psicom+\psiint-\psiext\nonumber \\
& = & (\alpha-\beta) \log(2)-(\alpha-\beta)\log\left (\frac{\alpha-\beta}{1-\beta}\right )-(1-\alpha)\log\left (\frac{1-\alpha}{1-\beta}\right )
-\alpha \log\left (\sqrt{2}\erfinv\left (\frac{1-\alpha}{1-\beta_0}\right )\right )\nonumber \\
&&+\alpha\log\left (\frac{\alpha-\beta}{\alpha}\right )
-\beta\log\left (\frac{\alpha-\beta_0}{1-\beta_0}\right )- (\alpha-\beta) \log(2)\nonumber\\
&&+\alpha \log\left (\sqrt{2}\erfinv\left (\frac{1-\alpha}{1-\beta_w}\right )\right )+\alpha \log(\alpha)+(1-\alpha)\log (1-\alpha)-\log(1-\beta_w)\nonumber \\
& = & \alpha\log\left (\frac{1-\beta}{\alpha-\beta}\frac{1-\alpha}{1-\beta}\frac{\alpha-\beta}{\alpha}\frac{\alpha}{1-\alpha}\right )+\beta\log\left (\frac{\alpha-\beta}{1-\beta}\frac{1-\beta_0}{\alpha-\beta_0}\right )
+\alpha \log\left (\frac{\erfinv\left (\frac{1-\alpha}{1-\beta_w}\right )}{\erfinv\left (\frac{1-\alpha}{1-\beta_0}\right )}\right )\nonumber \\
&&+\log(1-\beta)-\log(1-\beta_w)\nonumber \\
& = & \beta\log\left (\frac{\alpha-\beta}{\alpha-\beta_0}\frac{1-\beta_0}{1-\beta}\frac{1-\beta_w}{1-\beta_w}\right )
+\alpha \log\left (\frac{\erfinv\left (\frac{1-\alpha}{1-\beta_w}\right )}{\erfinv\left (\frac{1-\alpha}{1-\beta_0}\right )}\right )+
\log\left (\frac{1-\beta}{1-\beta_w}\right )\nonumber \\
& = & \alpha \log\left (\frac{\erfinv\left (\frac{1-\alpha}{1-\beta_w}\right )}{\erfinv\left (\frac{1-\alpha}{1-\beta_0}\right )}\right )+
(1-\beta)\log\left (\frac{1-\beta}{1-\beta_w}\right )+\beta\log\left (\frac{\alpha-\beta}{\alpha-\beta_0}\frac{1-\beta_0}{1-\beta_w}\right )
=I_{ldp}(\alpha,\beta).
\label{eq:hdg14}
\end{eqnarray}
A combination of (\ref{eq:ldpthm3Ierrub1}), (\ref{eq:hdg1}), and (\ref{eq:hdg14}) then gives
\begin{equation}\label{eq:hdg15}
  I_{err}=I_{ldp}(\alpha,\beta),
\end{equation}
and ensures that the choice for $\nu$, $A_0$, $c_3$, and $\gamma$ made in (\ref{eq:ldpthm3perrub2}) is indeed optimal. Moreover, in the lower tail regime ($\alpha<\alpha_w$, where $\alpha_w$ is such that $\psi_\beta(\alpha_w)=\xi_{\alpha_w}(\beta)=1$) considerations from \cite{StojnicEquiv10} ensure that one also has
\begin{equation}\label{eq:hdg1a}
  \Psi_{net}(\alpha,\beta)=I_{cor}(\alpha,\beta)\triangleq\lim_{n\rightarrow\infty}\frac{\log{P_{cor}}}{n}=\psicom+\psiint-\psiext,
\end{equation}
where $\psicom$, $\psiint$, and $\psiext$ are as in (\ref{eq:hdg2}). Finally, we are in position to fully characterize $\ell_1$'s LDP. The following theorem does so.
\begin{theorem}[$\ell_1$'s LDP]
Assume the setup of Theorem \ref{thm:thmweakthr} and assume that a pair $(\alpha,\beta)$ is given. Let $P_{err}$ be the probability that the solutions of (\ref{eq:system}) and (\ref{eq:l1}) coincide and let $P_{cor}$ be the probability that the solutions of (\ref{eq:system}) and (\ref{eq:l1}) do \emph{not} coincide. Let $\alpha_w$ and $\beta_w$ satisfy the \textbf{$\ell_1$'s fundamental PT} characterizations in the following way
\begin{equation}
\psi_{\beta}(\alpha_w)\triangleq
(1-\beta)\frac{\sqrt{\frac{2}{\pi}}e^{-\lp\erfinv\lp\frac{1-\alpha_w}{1-\beta}\rp\rp^2}}{\alpha_w\sqrt{2}\erfinv \lp\frac{1-\alpha_w}{1-\beta}\rp}=1\quad \mbox{and} \quad
\xi_{\alpha}(\beta_w)\triangleq
(1-\beta_w)\frac{\sqrt{\frac{2}{\pi}}e^{-\lp\erfinv\lp\frac{1-\alpha}{1-\beta_w}\rp\rp^2}}{\alpha\sqrt{2}\erfinv \lp\frac{1-\alpha}{1-\beta_w}\rp}=1.\label{eq:thmfinalldpl11}
\end{equation}
Further let $\beta_0$ satisfy the following \textbf{$\ell_1$'s fundamental LDP} characterization
\begin{equation}\label{eq:thmfinalldpl12}
\frac{\alpha-\beta}{\alpha-\beta_0}\xi_{\alpha}(\beta_0)=\frac{\alpha-\beta}{\alpha-\beta_0}
(1-\beta_0)\frac{\sqrt{\frac{2}{\pi}}e^{-(\erfinv(\frac{1-\alpha}{1-\beta_0}))^2}}{\alpha\sqrt{2}\erfinv (\frac{1-\alpha}{1-\beta_0})}=1.
\end{equation}
Finally, let $I_{ldp}(\alpha,\beta)$ be defined through the following \textbf{$\ell_1$'s fundamental LDP rate function} characterization
\begin{equation}
I_{ldp}(\alpha,\beta)\triangleq
\alpha\log\left (\frac{\erfinv\left (\frac{1-\alpha}{1-\beta_w}\right )}{\erfinv\left (\frac{1-\alpha}{1-\beta_0}\right )}\right )+(1-\beta)\log\left (\frac{1-\beta}{1-\beta_w}\right ) +\beta\log\left (\frac{(\alpha-\beta)(1-\beta_0)}{(\alpha-\beta_0)(1-\beta_w)}\right ).
\label{eq:thmfinalldpl13}
\end{equation}
Then if $\alpha>\alpha_w$
\begin{equation}
I_{err}(\alpha,\beta)\triangleq\lim_{n\rightarrow\infty}\frac{\log{P_{err}}}{n}=I_{ldp}(\alpha,\beta).\label{eq:thmfinalldpl14}
\end{equation}
Moreover, if $\alpha<\alpha_w$
\begin{equation}
I_{cor}(\alpha,\beta)\triangleq\lim_{n\rightarrow\infty}\frac{\log{P_{cor}}}{n}=I_{ldp}(\alpha,\beta).\label{eq:thmfinalldpl15}
\end{equation}\label{thm:finalldpl1}
\end{theorem}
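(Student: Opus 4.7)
My plan is to assemble Theorem \ref{thm:finalldpl1} by combining the probabilistic upper bounds from Theorems \ref{thm:ldp3} and \ref{thm:ldp3lower} with the matching identities coming from the high-dimensional integral geometry framework of Section \ref{sec:hdg}. The overall scheme is a sandwich argument: the probabilistic side gives an explicit upper bound on $I_{err}$ (resp.\ $I_{cor}$) equal to $I_{ldp}(\alpha,\beta)$, while the geometric side gives an exact identity whose right-hand side, after simplification, also equals $I_{ldp}(\alpha,\beta)$. Equality then forces both bounds to be tight.

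For the upper tail $\alpha>\alpha_w$, I would first invoke Theorem \ref{thm:ldp3}, which supplies the inequality $I_{err}(\alpha,\beta)\leq I_{ldp}(\alpha,\beta)$, and then invoke (\ref{eq:hdg1}) from \cite{hdg}, which supplies the complementary identity $I_{err}(\alpha,\beta)=\psicom+\psiint-\psiext$. The chain of evaluations in (\ref{eq:hdg3})--(\ref{eq:hdg14}) then reduces the right-hand side to exactly $I_{ldp}(\alpha,\beta)$: the stationarity of $\psiint$ at $y_{int}=\erfinv((1-\alpha)/(1-\beta_0))$ is driven precisely by the $\ell_1$ LDP characterization (\ref{eq:thmfinalldpl12}), the second-derivative bound in (\ref{eq:hdg9})--(\ref{eq:hdg11}) certifies global optimality via the $\erfc$ inequality (\ref{eq:hdg10}), and the analogous treatment of $\psiext$ at $y_{ext}=\erfinv((1-\alpha)/(1-\beta_w))$ uses the $\ell_1$ PT characterization (\ref{eq:thmfinalldpl11}) together with the simplifications from \cite{StojnicEquiv10}. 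Putting these pieces into (\ref{eq:hdg12})--(\ref{eq:hdg14}) yields $\psicom+\psiint-\psiext=I_{ldp}(\alpha,\beta)$, which combined with the probabilistic upper bound gives (\ref{eq:thmfinalldpl14}).

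For the lower tail $\alpha<\alpha_w$, the structure mirrors the upper tail. Theorem \ref{thm:ldp3lower} supplies $I_{cor}(\alpha,\beta)\leq I_{ldp}(\alpha,\beta)$; the identity (\ref{eq:hdg1a}), obtained by running the high-dimensional geometry argument in the complementary regime as in \cite{StojnicEquiv10}, gives $I_{cor}(\alpha,\beta)=\psicom+\psiint-\psiext$ with the same three quantities from (\ref{eq:hdg2}). Since the evaluation of the right-hand side in (\ref{eq:hdg12})--(\ref{eq:hdg14}) is a purely analytic exercise that depends only on the definitions of $\beta_w$ and $\beta_0$ and not on the sign of $\alpha-\alpha_w$, the same simplification transports verbatim, so $\psicom+\psiint-\psiext=I_{ldp}(\alpha,\beta)$ once more, and (\ref{eq:thmfinalldpl15}) follows by the sandwich.

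The main obstacle, and the place where care is required, is the algebraic reconciliation step. It is not a priori obvious that the three separately defined quantities $\psicom$, $\psiint$, $\psiext$ will collapse into the clean three-term form $I_{ldp}(\alpha,\beta)$ in (\ref{eq:thmfinalldpl13}); what makes the miracle work is that the stationary points of $\psiint$ and $\psiext$ land at $\erfinv((1-\alpha)/(1-\beta_0))$ and $\erfinv((1-\alpha)/(1-\beta_w))$ respectively, and these substitutions, followed by the bookkeeping of (\ref{eq:hdg14}), cause the $\log 2$, $\log\sqrt{2/\pi}$, and $\log\alpha$ terms to cancel in precisely the right pattern. Once this is verified, the equality between the probabilistic bound and the geometric identity yields tightness, and the theorem follows at once.
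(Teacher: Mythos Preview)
Your proposal is correct and follows essentially the same route as the paper: invoke the geometric identities (\ref{eq:hdg1}) and (\ref{eq:hdg1a}), evaluate $\psiint$ and $\psiext$ at the stationary points dictated by the LDP and PT characterizations respectively, verify global optimality via the convexity argument in (\ref{eq:hdg9})--(\ref{eq:hdg11}), and then carry out the algebraic collapse in (\ref{eq:hdg14}) to obtain $I_{ldp}(\alpha,\beta)$. One small clarification: since (\ref{eq:hdg1}) and (\ref{eq:hdg1a}) are already exact identities, the geometric side alone yields $I_{err}=I_{ldp}$ (resp.\ $I_{cor}=I_{ldp}$) directly, so the probabilistic bounds from Theorems \ref{thm:ldp3} and \ref{thm:ldp3lower} serve as a consistency check (confirming optimality of the chosen $\nu,A_0,c_3,\gamma$) rather than as the other half of a genuine sandwich.
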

\begin{proof} Follows from the above discussion.
\end{proof}
Before we present the results one can obtain based on the above theorem we will establish a few additional properties of function $\frac{\alpha-\beta}{\alpha-\beta_0}\xi_{\alpha}(\beta_0)$ to ensure that everything is on a right mathematical track.

\subsubsection{Properties of $\frac{\alpha-\beta}{\alpha-\beta_0}\xi_{\alpha}(\beta_0)$}
\label{sec:proppsixi}

In this subsection we will try to complement some of the key properties of functions $\xi_\alpha(\beta)$ and $\psi_\beta(\alpha_w)$ from Theorem \ref{thm:thmweakthr} that we introduced in Section \ref{sec:propxi}. We again emphasize that these may be viewed as fairly straightforward but for the mathematical exactness and completeness we find it convenient to have them neatly presented.

As was the case with functions $\xi_\alpha(\beta)$ and $\psi_\beta(\alpha_w)$ in Section \ref{sec:propxi} the key observation regarding $\frac{\alpha-\beta}{\alpha-\beta_0}\xi_{\alpha}(\beta_0)$ is that for any fixed $(\alpha,\beta)\in (0,1)\times(0,\alpha)$ there is a unique $\beta_0$ such that $\frac{\alpha-\beta}{\alpha-\beta_0}\xi_{\alpha}(\beta_0)=1$. This essentially ensures that (\ref{eq:thmfinalldpl12}) is an unambiguous LDP characterization. To confirm that this is indeed true we proceed in a fashion similar to the one showcased in Section \ref{sec:propxi}. Setting as in (\ref{eq:proppsixi2})
\begin{equation}\label{eq:propxi02}
  q=\erfinv\left (\frac{1-\alpha}{1-\beta_0}\right ),
\end{equation}
we have
\begin{equation}\label{eq:propxi03}
  \frac{\alpha-\beta}{\alpha-\beta_0}\xi_{\alpha}(\beta_0)=1 \Leftrightarrow \frac{\alpha-\beta}{\alpha}\frac{1}{\erfc(q)}\frac{\sqrt{\frac{1}{\pi}}e^{-q^2}}{q}=1
  \Leftrightarrow \frac{\sqrt{\frac{1}{\pi}}e^{-q^2}}{q}-\erfc(q)c_{\alpha,\beta}=0, c_{\alpha,\beta}> 1.
\end{equation}
Utilizing (\ref{eq:hdg10a}) we then have for $q> \frac{1}{\sqrt{2c_{\alpha,\beta}(c_{\alpha,\beta}-1)}}$
\begin{equation}\label{eq:propxi04}
  \frac{\sqrt{\frac{1}{\pi}}e^{-q^2}}{q}-\erfc(q)c_{\alpha,\beta}\leq
\frac{\sqrt{\frac{1}{\pi}}e^{-q^2}}{q}-c_{\alpha,\beta}\frac{2}{\sqrt{\pi}}\frac{e^{-q^2}}{q+\sqrt{q^2+2}}<  0.
\end{equation}
Moreover, for $q\leq \frac{1}{\sqrt{2c_{\alpha,\beta}(c_{\alpha,\beta}-1)}}$ (actually even for $q\leq \frac{1}{\sqrt{2(c_{\alpha,\beta}-1)}}$)
\begin{equation}\label{eq:propxi05}
 \frac{d\left ( \frac{\sqrt{\frac{1}{\pi}}e^{-q^2}}{q}-\erfc(q)c_{\alpha,\beta}\right )}{dq}=
 -2\sqrt{\frac{1}{\pi}}e^{-q^2}-\frac{\sqrt{\frac{1}{\pi}}e^{-q^2}}{q^2}+\frac{2}{\sqrt{\pi}}e^{-q^2}c_{\alpha,\beta}
 \leq -2\sqrt{\frac{1}{\pi}}e^{-q^2}(c_{\alpha,\beta}-1)^2<0,
\end{equation}
which means that $\left ( \frac{\sqrt{\frac{1}{\pi}}e^{-q^2}}{q}-\erfc(q)c_{\alpha,\beta}\right )$ is decreasing for $q\leq \frac{1}{\sqrt{2c_{\alpha,\beta}(c_{\alpha,\beta}-1)}}$. Finally, one easily also has
\begin{equation}\label{eq:propxi06}
\lim_{q\rightarrow\0} \left ( \frac{\sqrt{\frac{1}{\pi}}e^{-q^2}}{q}-\erfc(q)c_{\alpha,\beta}\right )=\infty.
\end{equation}
A combination of (\ref{eq:propxi04}), (\ref{eq:propxi05}), and (\ref{eq:propxi06}) then implies that $\left ( \frac{\sqrt{\frac{1}{\pi}}e^{-q^2}}{q}-\erfc(q)c_{\alpha,\beta}\right )$ has a unique solution (moreover, it is in the interval $(0, \frac{1}{\sqrt{2c_{\alpha,\beta}(c_{\alpha,\beta}-1)}})$). This then implies that $\frac{\alpha-\beta}{\alpha-\beta_0}\xi_{\alpha}(\beta_0)=1$ also has a unique solution, i.e. that for any fixed $(\alpha,\beta)\in (0,1)\times (0,\alpha)$ there is a unique $\beta_0$ such that $\frac{\alpha-\beta}{\alpha-\beta_0}\xi_\alpha(\beta_0)=1$, which as mentioned above essentially means that (\ref{eq:thmfinalldpl12}) is an unambiguous LDP characterization. For the completeness, in Figure \ref{fig:propxi0} we present a few numerical results related to the behavior of $\left ( \frac{\sqrt{\frac{1}{\pi}}e^{-q^2}}{q}-\erfc(q)c_{\alpha,\beta}\right )$ (and ultimately of $\left (\frac{\alpha-\beta}{\alpha-\beta_0}\xi_{\alpha}(\beta_0)-1\right )$) that indeed confirm the above calculations.

\begin{figure}[htb]
\centering
\centerline{\epsfig{figure=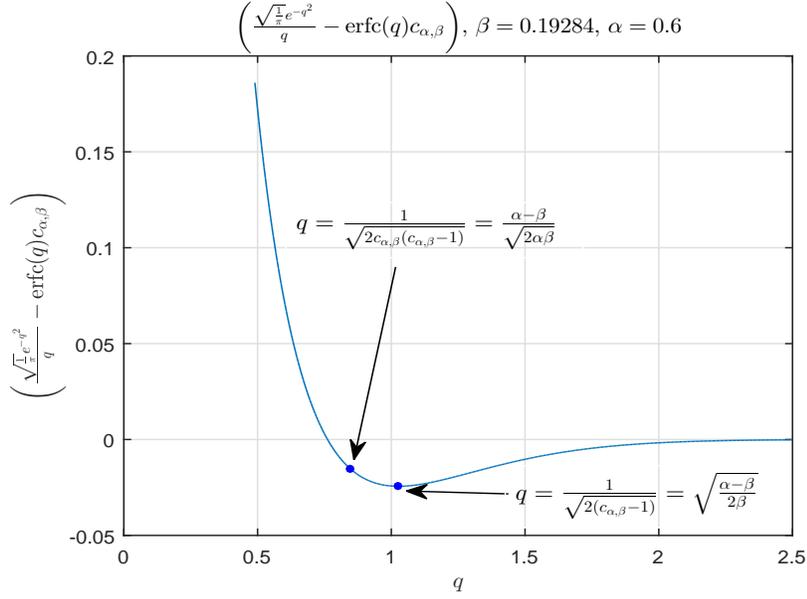,width=11.5cm,height=8cm}}
\caption{Uniqueness of the solution of $\frac{\alpha-\beta}{\alpha-\beta_0}\xi_{\alpha}(\beta_0)=1$ is implied by the properties of $\left ( \frac{\sqrt{\frac{1}{\pi}}e^{-q^2}}{q}-\erfc(q)c_{\alpha,\beta}\right )$}
\label{fig:propxi0}
\end{figure}


\subsection{Theoretical and numerical LDP results}
\label{sec:thnumresuts}

In this section we finally give a little bit of a flavor to what is actually proven in Theorem \ref{thm:finalldpl1}. In Figure \ref{fig:l1regldpIerrub} we show the theoretical LDP rate function curve that one can obtain based on Theorem \ref{thm:finalldpl1}. We complement this figure by Table \ref{tab:Ildptab1} where we show the numerical values for all quantities of interest in Theorems \ref{thm:ldp3} and \ref{thm:finalldpl1}. Finally, in Figure \ref{fig:weakl1LDPthrsim} and Table \ref{tab:Ildptab2} we show how the simulated values compare to the theoretical ones and observe that even for fairly small dimensions (of order $100$) one already approaches the theoretical curves derived of course for a fully (infinite dimensional) asymptotic regime.


\begin{figure}[htb]
\begin{minipage}[b]{.5\linewidth}
\centering
\centerline{\epsfig{figure=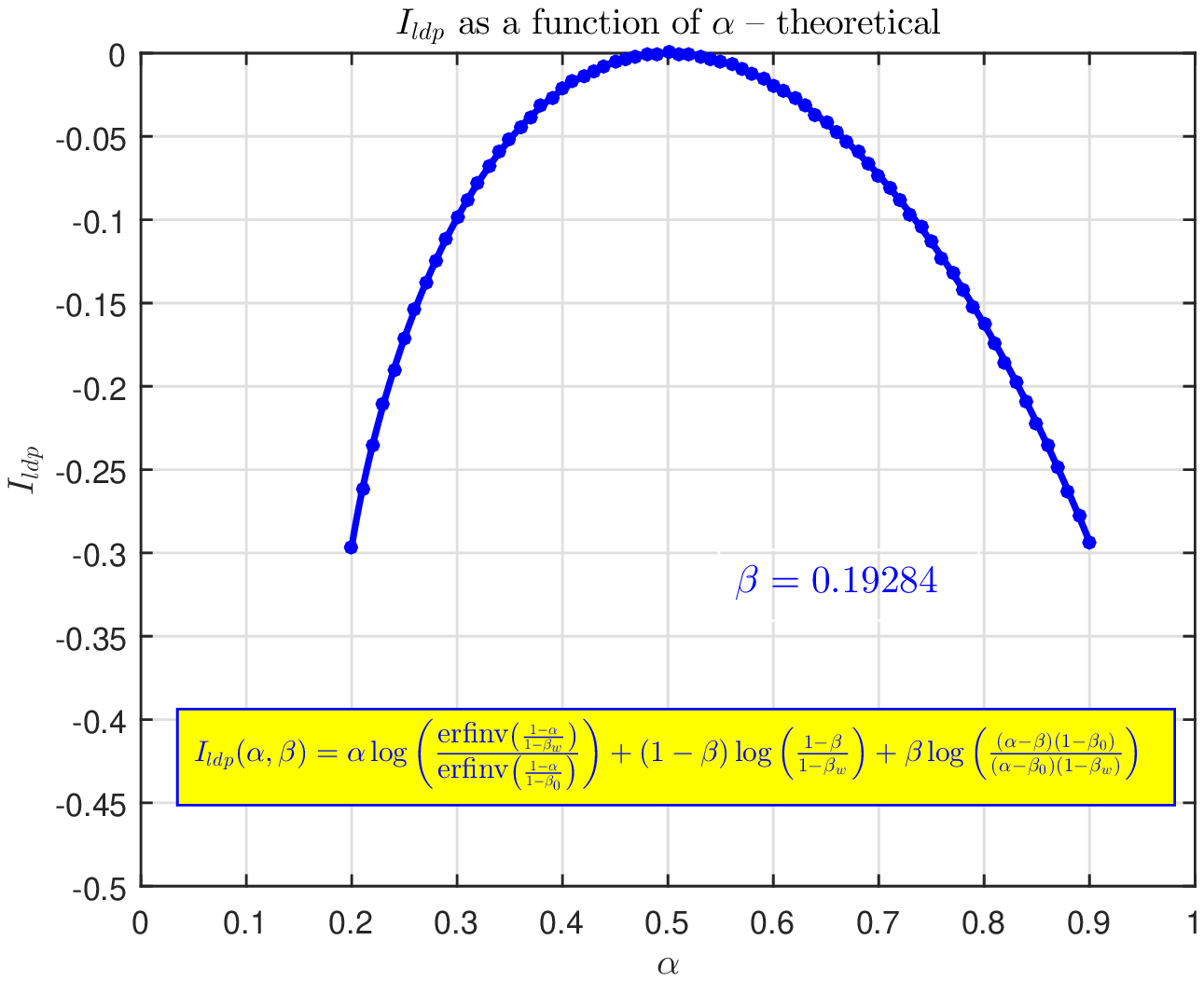,width=9cm,height=7cm}}
\end{minipage}
\begin{minipage}[b]{.5\linewidth}
\centering
\centerline{\epsfig{figure=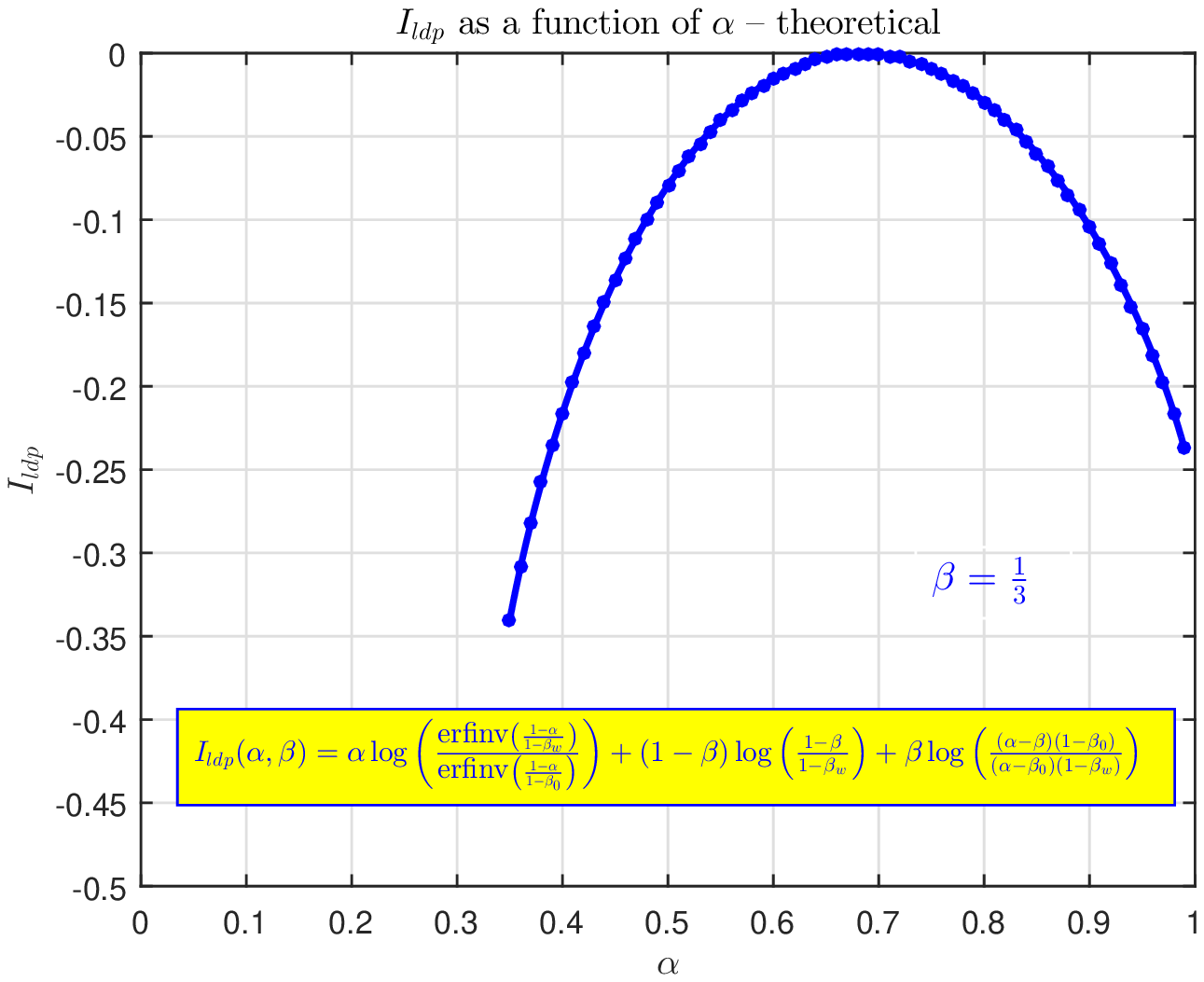,width=9cm,height=7cm}}
\end{minipage}
\caption{$I_{ldp}$ as a function of $\alpha$; left -- $\beta=0.19284$; right -- $\beta=\frac{1}{3}$}
\label{fig:l1regldpIerrub}
\end{figure}

\begin{table}[h]
\caption{A collection of values for $\beta_w$, $\beta_0$, $\nu$, $A_0$, $c_3$, $\gamma$, and $I_{ldp}$ in Theorem \ref{thm:ldp3}; $\beta=0.19284$}\vspace{.1in}
\hspace{-0in}\centering
\begin{tabular}{||c||c|c|c|c|c|c|c||}\hline\hline
$\alpha$ & $ 0.3500 $ & $ 0.4000 $ & $ 0.4500 $ & $ 0.5000 $ & $ 0.5500 $ & $ 0.6000 $ & $ 0.6500 $ \\ \hline\hline
$\beta_w$& $ 0.1099 $ & $ 0.1349 $ & $ 0.1625 $ & $ 0.1928 $ & $ 0.2262 $ & $ 0.2630 $ & $ 0.3038 $ \\ \hline
$\beta_0$& $ -0.6420 $ & $ -0.2424 $ & $ 0.0121 $ & $ 0.1929 $ & $ 0.3313 $ & $ 0.4433 $ & $ 0.5380 $ \\ \hline\hline
$\nu$    & $ 1.1036 $ & $ 1.0228 $ & $ 0.9477 $ & $ 0.8769 $ & $ 0.8091 $ & $ 0.7434 $ & $ 0.6788 $ \\ \hline
$A_0$    & $ 2.1287 $ & $ 1.5786 $ & $ 1.2361 $ & $ 1.0000 $ & $ 0.8256 $ & $ 0.6902 $ & $ 0.5807 $ \\ \hline
$c_3$    & $ -0.9815 $ & $ -0.5978 $ & $ -0.2865 $ & $ 0.0000 $ & $ 0.2859 $ & $ 0.5876 $ & $ 0.9203 $ \\ \hline
$\gamma$ & $ 0.1390 $ & $ 0.2003 $ & $ 0.2713 $ & $ 0.3536 $ & $ 0.4491 $ & $ 0.5611 $ & $ 0.6942 $ \\ \hline\hline
$I_{ldp}$& $ \mathbf{-0.0517} $ & $ \mathbf{-0.0217} $ & $ \mathbf{-0.0052} $ & $ \mathbf{0.0000} $ & $ \mathbf{-0.0049} $ & $ \mathbf{-0.0190} $ & $ \mathbf{-0.0418} $ \\ \hline\hline
\end{tabular}
\label{tab:Ildptab1}
\end{table}

\begin{figure}[htb]
\centering
\centerline{\epsfig{figure=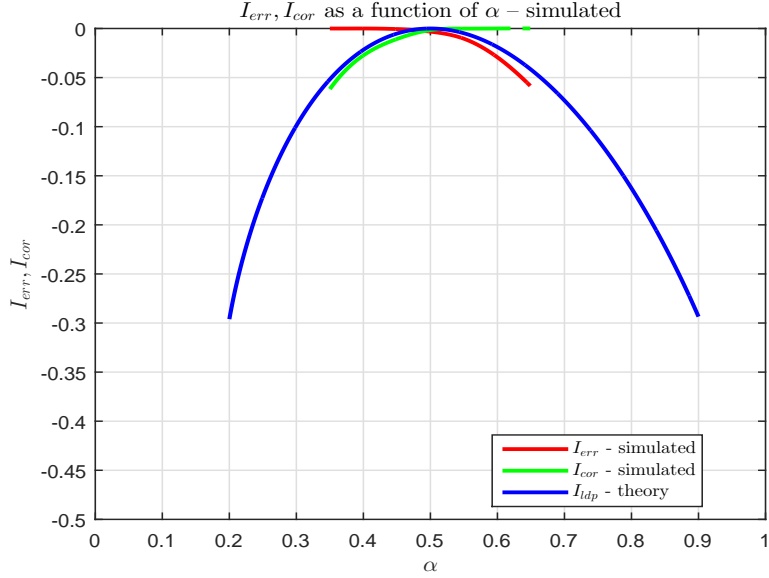,width=11.5cm,height=8cm}}
\caption{$\ell_1$'s weak LDP rate function -- theory and simulation; $\beta=0.19284$}
\label{fig:weakl1LDPthrsim}
\end{figure}

\begin{table}[h]
\caption{$I_{err}$, $I_{err}$ -- simulated; $I_{ldp}$ calculated for $\beta=0.19284$}\vspace{.1in}
\hspace{-0in}\centering
\begin{tabular}{||c||c|c|c|c|c|c|c||}\hline\hline
$\alpha$ & $ 0.35 $ & $ 0.40 $ & $ 0.45 $ & $ 0.50 $ & $ 0.55 $ & $ 0.60 $ & $0.65 $\\ \hline\hline
$k$      & $ 19 $ & $ 38 $ & $ 38 $ & $ 57 $ & $ 57 $ & $ 38 $ & $ 26 $\\ \hline
$m$      & $ 35 $ & $ 80 $ & $ 90 $ & $ 150 $ & $ 165 $ & $ 120 $ & $ 89 $\\ \hline
$n$      & $ 100 $ & $ 200 $ & $ 200 $ & $ 300 $ & $ 300 $ & $ 200 $ & $ 137 $\\ \hline\hline
$I_{err} $ -- simulated & $ -0.0000 $ & $ -0.0000 $ & $ -0.0006 $ & \red{$ \mathbf{-0.0032} $} & \red{$ \mathbf{-0.0104} $} & \red{$ \mathbf{-0.0293} $} & \red{$ \mathbf{-0.0588} $}\\ \hline\hline
$I_{cor}$ -- simulated & \gr{$ \mathbf{-0.0617} $} & \gr{$ \mathbf{-0.0274} $} & \gr{$ \mathbf{-0.0107} $} & \gr{$ \mathbf{-0.0016} $} & $ -0.0001 $ & $ -0.0000 $ & $ -0.0000 $ \\ \hline\hline
$I_{ldp}$ -- theory & \bl{$ \mathbf{-0.0517} $} & \bl{$ \mathbf{-0.0217} $} & \bl{$ \mathbf{-0.0052} $} & \bl{$ \mathbf{0.0000} $} & \bl{$ \mathbf{-0.0049} $} & \bl{$ \mathbf{-0.0190} $} & \bl{$ \mathbf{-0.0418} $} \\ \hline\hline
\end{tabular}
\label{tab:Ildptab2}
\end{table}

\subsection{High-dimensional geometry approach of \cite{DonohoPol,DonohoUnsigned}}
\label{sec:donhdg}

In this section we look at an alternative high-dimensional approach to characterize the performance of $\ell_1$. In \cite{DonohoPol,DonohoUnsigned} Donoho connected the success of the $\ell_1$ optimization when used for finding sparsest $\x$ in (\ref{eq:system}) to the study of neighborly polytopes. He showed that $\ell_1$ succeeds if some of the faces of the crosspolytope are preserved after being projected by the system matrix $A$. He then relied on the vast theory developed in high-dimensional geometry arena to deal with the projected polytopes and their neighborliness. As the underlying medium was random \cite{DonohoPol,DonohoUnsigned} then proceeded by considering an asymptotic regime and analyzing the underlying phase transitions. In \cite{StojnicCSetam09,StojnicUpper10} we designed a purely probabilistic approach and in \cite{StojnicEquiv10} we showed that the $\ell_1$'s phase transitions from Theorem \ref{thm:thmweakthr} and Donoho's result from \cite{DonohoPol,DonohoUnsigned} are in a perfect mathematical agreement.

The analysis presented in \cite{DonohoPol,DonohoUnsigned} can also be used for LDP characterizations. We will assume that we are given a pair $(\alpha,\beta)$ and will immediately write the results for both, upper and lower LDP regimes, i.e. for $\alpha>\alpha_w$ and for $\alpha<\alpha_w$ (where $\alpha_w$ is such that $\psi_\beta(\alpha_w)=\xi_{\alpha_w}(\beta)=1$). When put in the LDP frame of Section \ref{sec:hdg} and ultimately \cite{StojnicEquiv10} results of \cite{DonohoPol,DonohoUnsigned} give
\begin{eqnarray}\label{eq:donhdg1}
  \Psi_{net}^{(D)}(\alpha,\beta) & =&I_{err}(\alpha,\beta)\triangleq\lim_{n\rightarrow\infty}\frac{\log{P_{err}}}{n}=\psicom^{(D)}-\psiint^{(D)}-\psiext^{(D)},\alpha>\alpha_w\nonumber \\
  \Psi_{net}^{(D)}(\alpha,\beta) & =&I_{cor}(\alpha,\beta)\triangleq\lim_{n\rightarrow\infty}\frac{\log{P_{err}}}{n}=\psicom^{(D)}-\psiint^{(D)}-\psiext^{(D)},\alpha<\alpha_w,
\end{eqnarray}
where
\begin{eqnarray}
\psicom^{(D)} & = & (\alpha-\beta) \log(2)-(\alpha-\beta)\log\left (\frac{\alpha-\beta}{1-\beta}\right )-(1-\alpha)\log\left (\frac{1-\alpha}{1-\beta}\right )\nonumber \\
\psiext^{(D)} & = & \min_{y\geq 0} (\alpha y^2 -(1-\alpha)\log(\erf(y))), \label{eq:donhdg2}
\end{eqnarray}
and
\begin{equation}
\psiint^{(D)}=(\alpha-\beta)\left (-\frac{1}{2}\frac{\beta}{\alpha-\beta}s_{\alpha,\beta}^2-\frac{1}{2}\log\left (\frac{2}{\pi}\right )+\log\left (\frac{\alpha s_{\alpha,\beta}}{\alpha-\beta}\right )\right )
+(\alpha-\beta)\log(2),\label{eq:intang1}
\end{equation}
where $s_{\alpha,\beta}\geq 0$ is the solution of
\begin{equation}
\frac{1}{2}\erfc\left (\frac{s}{\sqrt{2}}\right )=\frac{\alpha-\beta}{\alpha}\frac{e^{-\frac{s^2}{2}}}{s\sqrt{2\pi}}.\label{eq:intang3}
\end{equation}
Now if we can show that $\psiint^{(D)}=-\psiint$ then $\psinet^{(D)}=\psinet$ and the approach of \cite{DonohoPol,DonohoUnsigned} indeed matches the approach of Section \ref{sec:hdg}. To that end, we follow into the footsteps of \cite{StojnicEquiv10} and set
\begin{equation}
s_{\alpha,\beta}=\sqrt{2}\erfinv\left (\frac{1-\alpha}{1-\beta_0}\right ),\label{eq:sgamma}
\end{equation}
where as earlier $\beta_0$ is such that $\frac{\alpha-\beta}{\alpha-\beta_0}\xi_{\alpha}(\beta_0)=1$. For such a $s_{\alpha,\beta}$ (\ref{eq:intang3}) becomes
\begin{equation}
\frac{1}{2}\left (\frac{\alpha-\beta_0}{1-\beta_0}\right )=\frac{\alpha-\beta}{\alpha}\frac{e^{-\left (\erfinv\left (\frac{1-\alpha}{1-\beta_0}\right )\right )^2}}{\sqrt{2\pi}\sqrt{2}\erfinv\left (\frac{1-\alpha}{1-\beta_0}\right )},\label{eq:intang3a}
\end{equation}
which is true because of $\frac{\alpha-\beta}{\alpha-\beta_0}\xi_{\alpha}(\beta_0)=1$. Now replacing $s_{\alpha,\beta}$ from (\ref{eq:sgamma}) in (\ref{eq:intang1}) we obtain
\begin{eqnarray}
\psiint^{(D)}&=&(\alpha-\beta)\left (-\frac{1}{2}\frac{\beta}{\alpha-\beta}s_{\alpha,\beta}^2-\frac{1}{2}\log\left (\frac{2}{\pi}\right )+\log\left (\frac{\alpha s_{\alpha,\beta}}{\alpha-\beta}\right )\right )
+(\alpha-\beta)\log(2)\nonumber \\
&=&(\alpha-\beta)\left (-\frac{\beta}{\alpha-\beta}\left (\erfinv\left (\frac{1-\alpha}{1-\beta_0}\right )\right )^2-\frac{1}{2}\log\left (\frac{2}{\pi}\right )+\log\left (\frac{\sqrt{\frac{2}{\pi}}(1-\beta_0) e^{-\left (\erfinv\left (\frac{1-\alpha}{1-\beta_0}\right )\right )^2}}{\alpha-\beta_0}\right )+\log(2)\right )\nonumber \\
& = & -\alpha \left (\erfinv\left (\frac{1-\alpha}{1-\beta_0}\right )
\right )^2 -(\alpha-\beta)\log\left (\frac{\alpha-\beta_0}{1-\beta_0}\right )+(\alpha-\beta) \log(2).
\label{eq:intang1a}
\end{eqnarray}
Connecting (\ref{eq:intang1a}) and the second equality in (\ref{eq:hdg12}) then confirms that indeed $\psiint^{(D)}=-\psiint$ and finally $\psinet^{(D)}=\psinet$.

\section{Phase transitions -- nonnegative vectors}
\label{sec:phasetransnonn}

In this section we look at a class of unknown vectors structured a bit more beyond the standard sparsity. We start with the same systems as in (\ref{eq:system}) but additionally insist that $\y$ was obtained through (\ref{eq:defy}) with $\tilde{x}$ being not only $k$-sparse but also with components that are not negative. We will call such vectors nonnegative. To solve (\ref{eq:system}) knowing that there is an $\x$ that is nonnegative one can employ all the standard methods that can be employed for general $\x$. The only difference would be that one now insists that components of $\x$ are not negative. Here we will focus on such a modification of (\ref{eq:l1}) (which we may often refer to as the \emph{nonnegative} $\ell_1$)
\begin{eqnarray}
\mbox{min} & & \|\x\|_{1}\nonumber \\
\mbox{subject to} & & A\x=\y\nonumber \\
&& \x\geq 0. \label{eq:l1nonn}
\end{eqnarray}
Of course, this small modification does not take away any of the features that make (\ref{eq:l1}) relevant. These are in first place its polynomial complexity and the fact that it is a linear program (a bit more complex though than the one in (\ref{eq:l1}) but still a linear program nonetheless). Its implementation is as universal as the implementation of (\ref{eq:l1}) and knowing $A$ and $\y$ is perfectly sufficient to use it (as earlier, full-rank of matrix $A$ will typically be assumed throughout, though).

As there is not much difference when compared to (\ref{eq:l1}) one would then expect that (\ref{eq:l1nonn}) exhibits excellent performance characteristics as (\ref{eq:l1}) does. Of course, (\ref{eq:l1nonn}) is well known in the theory of linear systems with sparse solutions and its performance has been characterized in various ways. The standard by now are the results of \cite{DonohoPol,DonohoUnsigned,DonohoSigned,DT,StojnicCSetam09,StojnicUpper10} that provided performance characterizations analogous to the ones they did for (\ref{eq:l1}). Namely, \cite{DonohoSigned,DT,StojnicCSetam09,StojnicUpper10} uncovered that (\ref{eq:l1nonn}) also exhibits the so-called phase-transition (PT) phenomenon when utilized in statistical contexts. Moreover, both sets of results, \cite{DT,DonohoSigned} and \cite{StojnicCSetam09,StojnicUpper10}, in addition to uncovering the existence of the phase transition phenomenon precisely characterized the PT curves. Here, we will make a substantial progress in studying further the phase transitions and will fully characterize its LDP (of course similarly to what we did in earlier sections for (\ref{eq:l1})). As in the case of (\ref{eq:l1}), we will do so through two different approaches, the novel, more modern purely probabilistic one and through another one that is based on high-dimensional geometry.

As was the case when we studied general vectors $\x$ in earlier sections, we will split the presentation into several parts. Moreover, to facilitate following we will try as much as possible to parallel the presentation with what we did for general $\x$. Also, as many things will conceptually be similar we will try to avoid repeating many of the calculations and instead will just state the final results. The exceptions will be when we believe that adapting some calculations is not so straightforward; in these scenarios we will then sketch the key arguments again.

We will start things off by recalling on what is known about the phase transitions of (\ref{eq:l1nonn}). We will then connect them to the LDP and then study the LDPs in a great detail (following into the footsteps of what we did in earlier sections for the general $\x$). We will do so first through a purely probabilistic approach and then through considerations of some high-dimensional geometry aspects. As was the case when we studied general vectors $\x$ in earlier sections, here the main emphasis will again be on the elegance of the final results. Before proceeding with the phase transitions of (\ref{eq:l1nonn}) we mention that all the definitions regarding strong and weak PT introduced in Section \ref{sec:phasetrans} remain in place here with obvious modifications to incorporate that now $\x$ is a priori known to be nonnegative. We below recall on a theorem that essentially summarizes the results obtained in \cite{StojnicCSetam09,StojnicUpper10} and effectively establishes for any $0<\alpha\leq 1$ the exact value of $\beta_w$ for which (\ref{eq:l1nonn}) finds the a priori known to be nonnegative $k$-sparse $\x$ from (\ref{eq:system}).


\begin{theorem}(\cite{StojnicCSetam09,StojnicUpper10} Exact \emph{nonnegative} $\ell_1$'s weak threshold/PT)
Let $A$ be an $m\times n$ matrix in (\ref{eq:system})
with i.i.d. standard normal components. Let
the unknown $\x$ in (\ref{eq:system}) be $k$-sparse. Further, let all elements of $\x$ be nonnegative let that be a priori known.
Let $k,m,n$ be large
and let $\alpha_w=\frac{m}{n}$ and $\beta_w=\frac{k}{n}$ be constants
independent of $m$ and $n$. Let $\erfinv$ be the inverse of the standard error function associated with zero-mean unit variance Gaussian random variable.  Further, let $\alpha_w$ and $\beta_w$ satisfy the following \textbf{fundamental characterization of the \emph{nonnegative} $\ell_1$'s PT}

\begin{center}
\shadowbox{$
\xi^+_{\alpha_{w}}(\beta_w)\triangleq\psi^+_{\beta_w}(\alpha_{w})\triangleq
(1-\beta_w)\frac{\sqrt{\frac{1}{2\pi}}e^{-\lp\erfinv\lp 2\frac{1-\alpha_w}{1-\beta_w}-1\rp\rp^2}}{\alpha_w\sqrt{2}\erfinv \lp2\frac{1-\alpha_w}{1-\beta_w}-1\rp}=1.
$}
-\vspace{-.5in}\begin{equation}
\label{eq:thmweaktheta2nonn}
\end{equation}
\end{center}

Then:
\begin{enumerate}
\item If $\alpha>\alpha_w$ then with overwhelming probability the solution of (\ref{eq:l1nonn}) is the a priori known to be nonnegative $k$-sparse $\x$ from (\ref{eq:system}).
\item If $\alpha<\alpha_w$ then with overwhelming probability there will be an a priori known to be nonnegative $k$-sparse $\x$ (from a set of such $\x$'s with fixed locations of nonzero components) that satisfies (\ref{eq:system}) and is \textbf{not} the solution of (\ref{eq:l1nonn}).
    \end{enumerate}
\label{thm:thmweakthrnonn}
\end{theorem}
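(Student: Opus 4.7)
The plan is to mirror the proof of Theorem~\ref{thm:thmweakthr}, adjusting each step to carry the extra sign constraint $\x\geq 0$. First I would derive a nullspace characterization analogous to Theorem~\ref{thm:thmgenweak}. Without loss of generality assume $\x_1=\cdots=\x_{n-k}=0$ and $\x_{n-k+1},\ldots,\x_n>0$. For any feasible perturbation $\w\in\ker(A)$, the nonnegativity of $\x+\w$ forces $\w_i\geq 0$ on the zero support (the constraint $\w_i\geq -\x_i$ on the support of $\x$ is inactive for small $\w$). Under these conditions $\|\x+\w\|_1-\|\x\|_1=\sum_{i=1}^n \w_i$, so (\ref{eq:l1nonn}) recovers $\x$ if and only if every nonzero such $\w$ satisfies $\sum_i\w_i>0$. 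Accordingly I would introduce the ``bad'' set
\begin{equation*}
\Swnon\triangleq\{\w\in S^{n-1}\mid \textstyle\sum_{i=1}^n\w_i\leq 0,\ \w_i\geq 0 \text{ for }1\leq i\leq n-k\}.
\end{equation*}

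Next I would invoke the escape-through-a-mesh machinery of \cite{StojnicCSetam09} (and its upper-bound companion in \cite{StojnicUpper10}) to reduce the question $\ker(A)\cap\Swnon=\emptyset$ to comparing the Gaussian width $w(\h,\Swnon)=\max_{\w\in\Swnon}\h^T\w$ with $\|\g\|_2$; the threshold occurs asymptotically when $\tfrac{1}{\sqrt{n}}Ew(\h,\Swnon)=\sqrt{\alpha}$. Repeating the LP-duality calculation (\ref{eq:workww2})-(\ref{eq:ldpwhSw}) but with the sign constraint in place of the absolute value on the zero support, the closed form becomes
\begin{equation*}
w(\h,\Swnon)=\min_{\nu\geq 0}\sqrt{\sum_{i=1}^{n-k}\max(\h_i-\nu,0)^2+\sum_{i=n-k+1}^{n}(\nu-\h_i)^2},
\end{equation*}
the only difference from (\ref{eq:ldpwhSw}) being that $|\h_i|$ is replaced by $\h_i$ in the first sum (negative $\h_i$ contributes nothing when $\w_i\geq 0$).

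The third step is the saddle-point / moment calculation. Concentration gives
\begin{equation*}
\tfrac{1}{n}\bigl(Ew(\h,\Swnon)\bigr)^2\to (1-\beta)\bigl[(1+\nu^2)Q(\nu)-\nu\phi(\nu)\bigr]+\beta(1+\nu^2),
\end{equation*}
using the standard half-Gaussian formula $E\max(\h-\nu,0)^2=(1+\nu^2)Q(\nu)-\nu\phi(\nu)$. Stationarity in $\nu$ (equivalently, matching the expected active-set size) yields $(1-\beta_w)Q(\nu^\ast)=\alpha-\beta_w$, so $\nu^\ast=\sqrt{2}\,\erfinv\bigl(2\tfrac{1-\alpha_w}{1-\beta_w}-1\bigr)$ via $Q(x)=\tfrac{1}{2}\erfc(x/\sqrt{2})$; this is where only the positive half-line of $\h$ contributes, producing the factor $2$ inside $\erfinv$. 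Substituting $\nu^\ast$ into the matching condition $\tfrac{1}{n}(Ew(\h,\Swnon))^2=\alpha$, the $(1+\nu^2)\alpha$ terms cancel and leave $\alpha\nu^\ast=(1-\beta_w)\phi(\nu^\ast)$, which rearranges to exactly (\ref{eq:thmweaktheta2nonn}). Gordon's lower tail for $\|\g\|_2-w(\h,\Swnon)$ then produces the success half ($\alpha>\alpha_w$), while the matching upper-bound argument of \cite{StojnicUpper10} (or equivalently the high-dimensional geometry route of \cite{DonohoSigned,DT}) gives the failure half ($\alpha<\alpha_w$).

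The main obstacle I expect is the algebraic collapse in step three: all the bookkeeping tied to the sign-constrained zero support (the factor $\tfrac{1}{2}$ in $Q(\nu)$ versus $\erfc(\nu/\sqrt{2})$, the loss of the factor $2$ in $E[\max(\h-\nu,0)^2]$ versus $E[\max(|\h|-\nu,0)^2]$, the replacement of $\sqrt{2/\pi}$ by $\sqrt{1/(2\pi)}$ in the prefactor) must all cancel consistently to produce the elegant form $\xi^+_{\alpha_w}(\beta_w)=1$ with argument $2\tfrac{1-\alpha}{1-\beta}-1$ inside $\erfinv$ that exactly parallels (\ref{eq:thmweaktheta2}); any inconsistency breaks the parallel with the general-$\x$ case and is where most of the bookkeeping attention needs to be spent.
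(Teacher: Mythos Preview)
Your proposal is correct and follows essentially the same approach the paper (via its cited references \cite{StojnicCSetam09,StojnicUpper10}) uses: the paper's own proof here is just a citation, but the ingredients you describe appear explicitly later in the paper --- your nullspace condition is exactly Theorem~\ref{thm:thmgenweaknonn} (your $\sum_i\w_i>0$ is the same as $-\sum_{i>n-k}\w_i<\sum_{i\leq n-k}\w_i$), and your width formula matches (\ref{eq:ldpwhSwnonn}) in distribution. Your saddle-point algebra is also right: combining stationarity $\nu[(1-\beta)Q(\nu)+\beta]=(1-\beta)\phi(\nu)$ with the matching condition collapses to $(1-\beta)Q(\nu^\ast)=\alpha-\beta$ and then $\alpha\nu^\ast=(1-\beta)\phi(\nu^\ast)$, which is precisely (\ref{eq:thmweaktheta2nonn}).
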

\begin{proof}
The first part was established in \cite{StojnicCSetam09} and the second one was established in \cite{StojnicUpper10}. An alternative way of establishing the same set of results was also presented in \cite{StojnicEquiv10}. Of course, similar results were obtained in \cite{DT,DonohoSigned}.
\end{proof}

\subsection{Properties of $\xi^+_\alpha(\beta)$ and $\psi^+_\beta(\alpha)$}
\label{sec:propxinonn}

In this subsection we will briefly sketch that for functions $\xi^+_\alpha(\beta)$ and $\psi^+_\beta(\alpha)$ from Theorem \ref{thm:thmweakthrnonn} one has similar properties as for functions $\xi_\alpha(\beta)$ and $\psi_\beta(\alpha)$ from Theorem \ref{thm:thmweakthr}. To ensure parallelism with what we presented in earlier sections when we considered general vectors $\x$, we of course closely follow what was done in Section \ref{sec:propxi}.

\subsubsection{$\xi^+_\alpha(\beta)$}
\label{sec:propxi1nonn}

As was the case for $\xi_\alpha(\beta)$, the key observation regarding $\xi^+_\alpha(\beta)$ is that for any fixed $\alpha\in (0,1)$ there is a unique $\beta$ such that $\xi^+_\alpha(\beta)=1$ which ensures that (\ref{eq:thmweaktheta2nonn}) is an unambiguous PT characterization. Clearly, we below consider only $\beta\in(\max(2\alpha-1,0),\alpha)$ since if $\beta<\max(2\alpha-1,0)$ one easily from (\ref{eq:thmweaktheta2nonn}) has $\xi^+_\alpha(\beta)<0$.

\underline{\emph{1) For any fixed $\alpha\in (0,1)$, $\xi^+_\alpha(\beta)-1$ is a decreasing function of $\beta$ on interval $(\max(2\alpha-1,0),\alpha)$.
}}

To see this we proceed in the following straightforward way
\begin{eqnarray}\label{eq:propxi1nonn}
  \frac{d(\xi^+_\alpha(\beta)-1)}{d\beta} & = & \frac{d((1-\beta)\frac{\sqrt{\frac{1}{2\pi}}e^{-\lp\erfinv\lp2\frac{1-\alpha}{1-\beta}-1\rp\rp^2}}{\alpha\sqrt{2}\erfinv \lp2\frac{1-\alpha}{1-\beta}-1\rp}-1)}{d\beta}\nonumber\\
  & = & \sqrt{\frac{1}{2\pi}}\frac{-\frac{\sqrt{\pi} (1-\alpha)}{(1-\beta) \erfinv(2(1-\alpha)/(1-\beta)-1)^2}-\frac{ e^{-\lp\erfinv\lp2\frac{1-\alpha}{1-\beta}-1\rp\rp^2}}{\erfinv(2(1-\alpha)/(1-\beta)-1)}-\frac{2\sqrt{\pi} (1-\alpha)}{1-\beta}}{2 \sqrt{2} \alpha}\nonumber \\
  &< & 0.
\end{eqnarray}

\underline{\emph{2) For any fixed $\alpha\in (0,1)$, $\lim_{\beta\rightarrow \alpha}\xi^+_\alpha(\beta)-1=-1$.}}

This of course follows easily since
\begin{equation}\label{eq:propxi2nonn}
  \lim_{\beta\rightarrow \alpha}\lp \erfinv\lp2\frac{1-\alpha}{1-\beta}-1\rp\rp=\infty
\end{equation}

\underline{\emph{3) For any fixed $\alpha\in (0,1)$, $\xi^+_\alpha(\max(2\alpha-1,0))-1>0$.}}

We will show $\xi^+_\alpha(\max(2\alpha-1,0))>1$ which of course implies $\xi^+_\alpha(\max(2\alpha-1,0))-1>0$. If $\max(2\alpha-1,0)=0$ we then have through a combination of (\ref{eq:propxi31}), (\ref{eq:propxi32}), (\ref{eq:propxi33}), (\ref{eq:propxi34}, and (\ref{eq:propxi35}) $\xi^+_\alpha(\max(2\alpha-1,0))>1$. On the other hand, if $\max(2\alpha-1,0)=2\alpha-1$ we can write
\begin{equation}\label{eq:propxi31nonn}
\lim_{\beta\rightarrow (2\alpha-1)_+}\xi^+_\alpha(\beta)=\lim_{\beta\rightarrow (2\alpha-1)_+}\lp(1-\beta)\frac{\sqrt{\frac{1}{2\pi}}e^{-\lp\erfinv\lp 2\frac{1-\alpha}{1-\beta}-1\rp\rp^2}}{\alpha\sqrt{2}\erfinv \lp 2\frac{1-\alpha}{1-\beta}-1\rp}-1\rp=\infty.
\end{equation}
A combination of the above three observations ensures that for any fixed $\alpha\in (0,1)$ there is a unique $\beta\in(\max(2\alpha-1,0),\alpha)$ such that $\xi^+_\alpha(\beta)=1$, which as mentioned above essentially means that (\ref{eq:thmweaktheta2nonn}) is an unambiguous PT characterization. For the completeness, in Figure \ref{fig:propxinonn} we present a few numerical results related to the behavior of $\xi^+_\alpha(\beta)$ that indeed confirm the above calculations.
\begin{figure}[htb]
\begin{minipage}[b]{.5\linewidth}
\centering
\centerline{\epsfig{figure=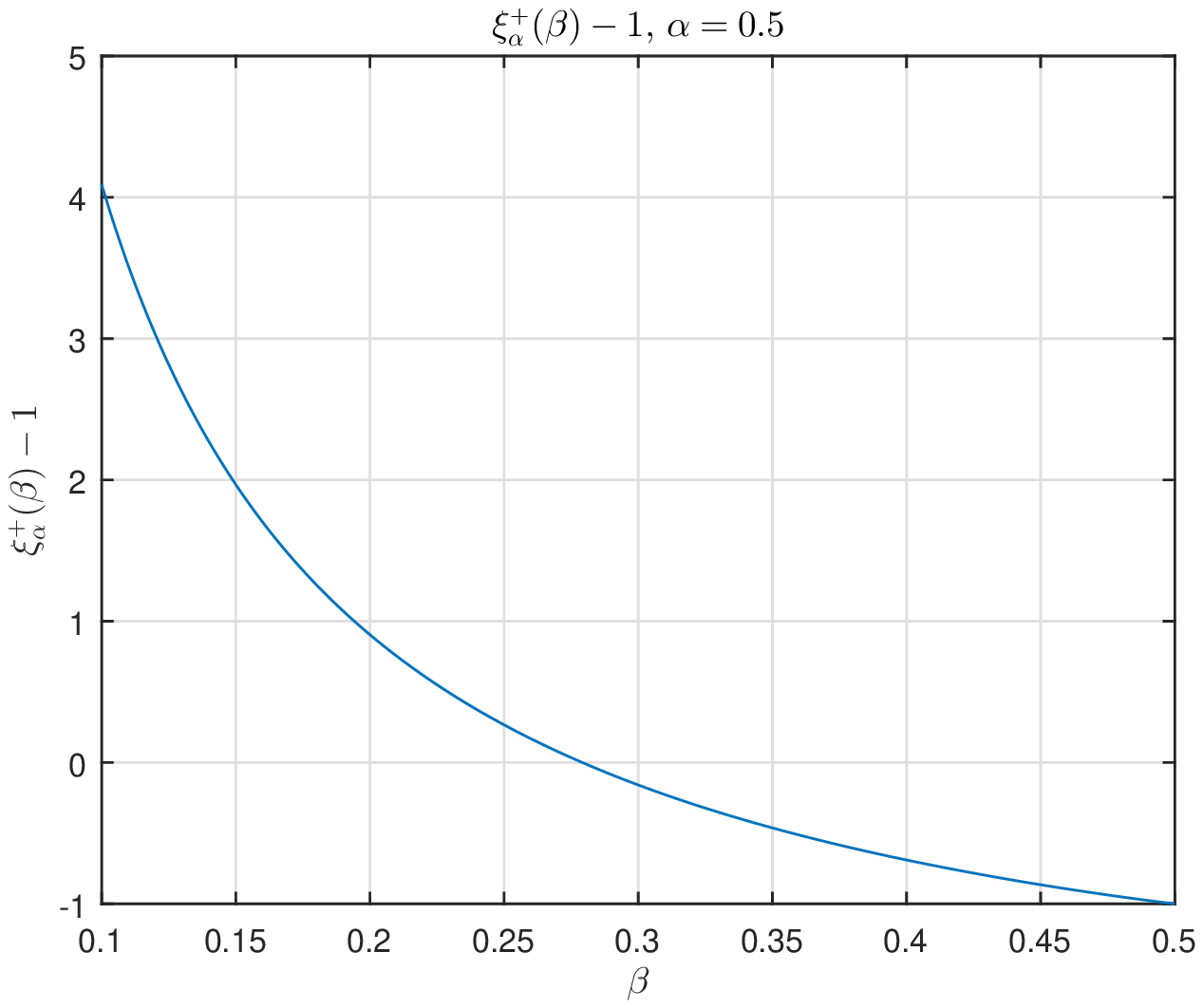,width=9cm,height=7cm}}
\end{minipage}
\begin{minipage}[b]{.5\linewidth}
\centering
\centerline{\epsfig{figure=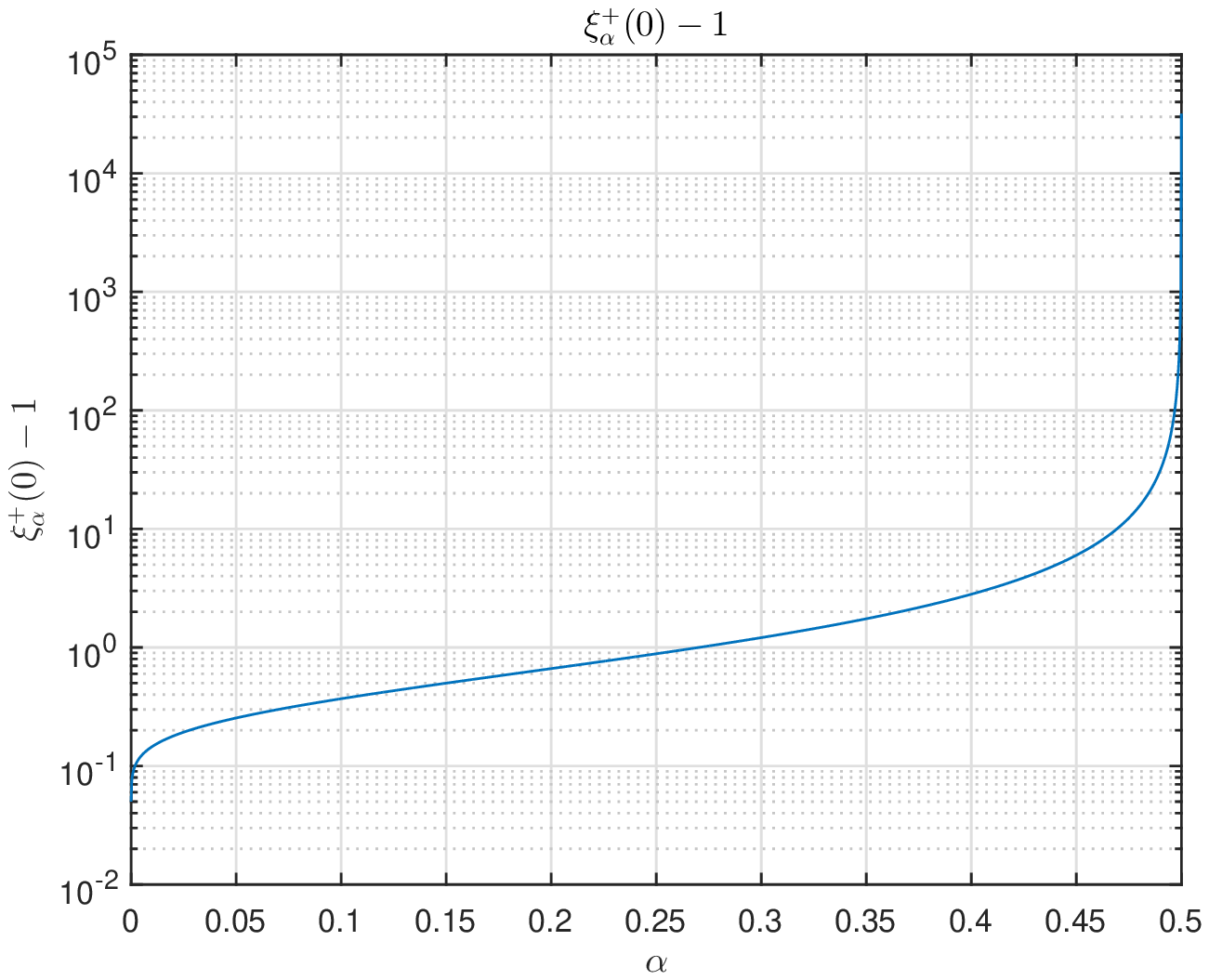,width=9cm,height=7cm}}
\end{minipage}
\caption{Properties of $\xi^+_\alpha(\beta)$: $\frac{d(\xi^+_\alpha(\beta)-1)}{d\beta}$ as a function of $\beta$ ($\alpha=0.5$) -- left; $\xi^+_\alpha(0)$ as a function of $\alpha$ -- right}
\label{fig:propxinonn}
\end{figure}

\subsubsection{$\psi^+_\beta(\alpha)$}
\label{sec:proppsixi1nonn}

We now look at $\psi^+_\beta(\alpha)$. We will show that for any fixed $\beta\in (0,1)$ there is a unique $\alpha$ such that $\psi^+_\beta(\alpha)=1$. This would ensure that the nonnegative $\ell_1$'s fundamental PT from the above theorem is also unambiguous when viewed as a function of $\alpha$. To confirm that this is indeed true we proceed by paralleling what was done in Section \ref{sec:proppsixi1}. Before doing that we also quickly observe that $\alpha\in(\beta,\frac{1+\beta}{2})$ is the interval of interest since if $\alpha>\frac{1+\beta}{2}$ (\ref{eq:thmweaktheta2nonn}) can not be satisfied.

\underline{\emph{1) For any fixed $\beta\in (0,1)$, $\psi^+_\beta(\alpha)-1$ is an increasing function of $\alpha$ on interval $(\beta,\frac{1+\beta}{2})$.
}}

To see this we proceed by computing the derivative
\begin{eqnarray}\label{eq:proppsixi1nonn}
  \frac{d(\psi^+_\beta(\alpha)-1)}{d\alpha} & = & \frac{d\lp(1-\beta)\frac{\sqrt{\frac{1}{2\pi}}e^{-\lp\erfinv\lp2\frac{1-\alpha}{1-\beta}-1\rp\rp^2}}{\alpha\sqrt{2}\erfinv \lp 2\frac{1-\alpha}{1-\beta}-1\rp}-1\rp}{d\alpha}\nonumber\\
   & = & \frac{2 (\beta-1) e^{-\left (\erfinv\left (2\frac{1-\alpha}{1-\beta}-1\right )\right )^2} \left (\erfinv\left (2\frac{1-\alpha}{1-\beta}-1\right )\right )
  +\sqrt{\pi} \alpha \left (2 \left (\erfinv\left (2\frac{1-\alpha}{1-\beta}-1\right )\right )^2+1\right )}{2 \alpha^2 \sqrt{\pi} \left (\erfinv\left (2\frac{1-\alpha}{1-\beta}-1\right )\right )^2}.\nonumber \\
\end{eqnarray}
Let
\begin{equation}\label{eq:proppsixi2nonn}
  q^+=\erfinv\left (2\frac{1-\alpha}{1-\beta}-1\right ).
\end{equation}
Then
\begin{eqnarray}\label{eq:proppsixi3nonn}
  \frac{d(\psi^+_\beta(\alpha)-1)}{d\alpha}
   & = & \frac{2 (\beta-1) e^{-\left (\erfinv\left (2\frac{1-\alpha}{1-\beta}-1\right )\right )^2} \left (\erfinv\left (2\frac{1-\alpha}{1-\beta}-1\right )\right )
  +\sqrt{\pi} \alpha \left (2 \left (\erfinv\left (2\frac{1-\alpha}{1-\beta}-1\right )\right )^2+1\right )}{2 \alpha^2 \sqrt{\pi} \left (\erfinv\left (2\frac{1-\alpha}{1-\beta}-1\right )\right )^2}\nonumber \\
  & = &  \frac{2 \frac{\alpha-1}{\erf(q^+)} e^{-(q^+)^2} q^+
  +\sqrt{\pi} \alpha \left (2(q^+)^2+1\right )}{2 \alpha^2 \sqrt{\pi} (q^+)^2}\nonumber \\
  & >& 0,
\end{eqnarray}
where the last inequality follows through the considerations after (\ref{eq:proppsixi3}). The function $(\psi^+_\beta(\alpha)-1)$ is then indeed increasing on $(\beta,\frac{1+\beta}{2})$.

\underline{\emph{2) For any fixed $\beta\in (0,1)$, $\lim_{\alpha\rightarrow \beta}\psi^+_\beta(\alpha)-1=-1$.}}

This easily follows after one observes that
\begin{equation}\label{eq:proppsixi6nonn}
  \lim_{\alpha\rightarrow \beta}\lp\erfinv\lp 2\frac{1-\alpha}{1-\beta}-1\rp\rp=\infty
\end{equation}

\underline{\emph{3) For any fixed $\beta\in (0,1)$, $\lim_{\alpha\rightarrow \frac{1+\beta}{2}}\psi^+_\beta(\alpha)-1=\infty >0$.}}

This easily follows after one observes that
\begin{equation}\label{eq:proppsixi7nonn}
  \lim_{\alpha\rightarrow \frac{1+\beta}{2}}\lp\erfinv\lp 2\frac{1-\alpha}{1-\beta}-1\rp\rp=0
\end{equation}
Combining the above three observations one can ensure that for any fixed $\beta\in (0,1)$ there is a unique $\alpha$ such that $\psi^+_\beta(\alpha)=1$, which reconfirms that the nonnegative $\ell_1$'s fundamental PT characterization is unambiguous. For the completeness, in Figure \ref{fig:proppsinonn} we present a few numerical results related to the behavior of $\psi^+_\beta(\alpha)$ that are indeed in agreement with the above calculations.
\begin{figure}[htb]
\begin{minipage}[b]{.5\linewidth}
\centering
\centerline{\epsfig{figure=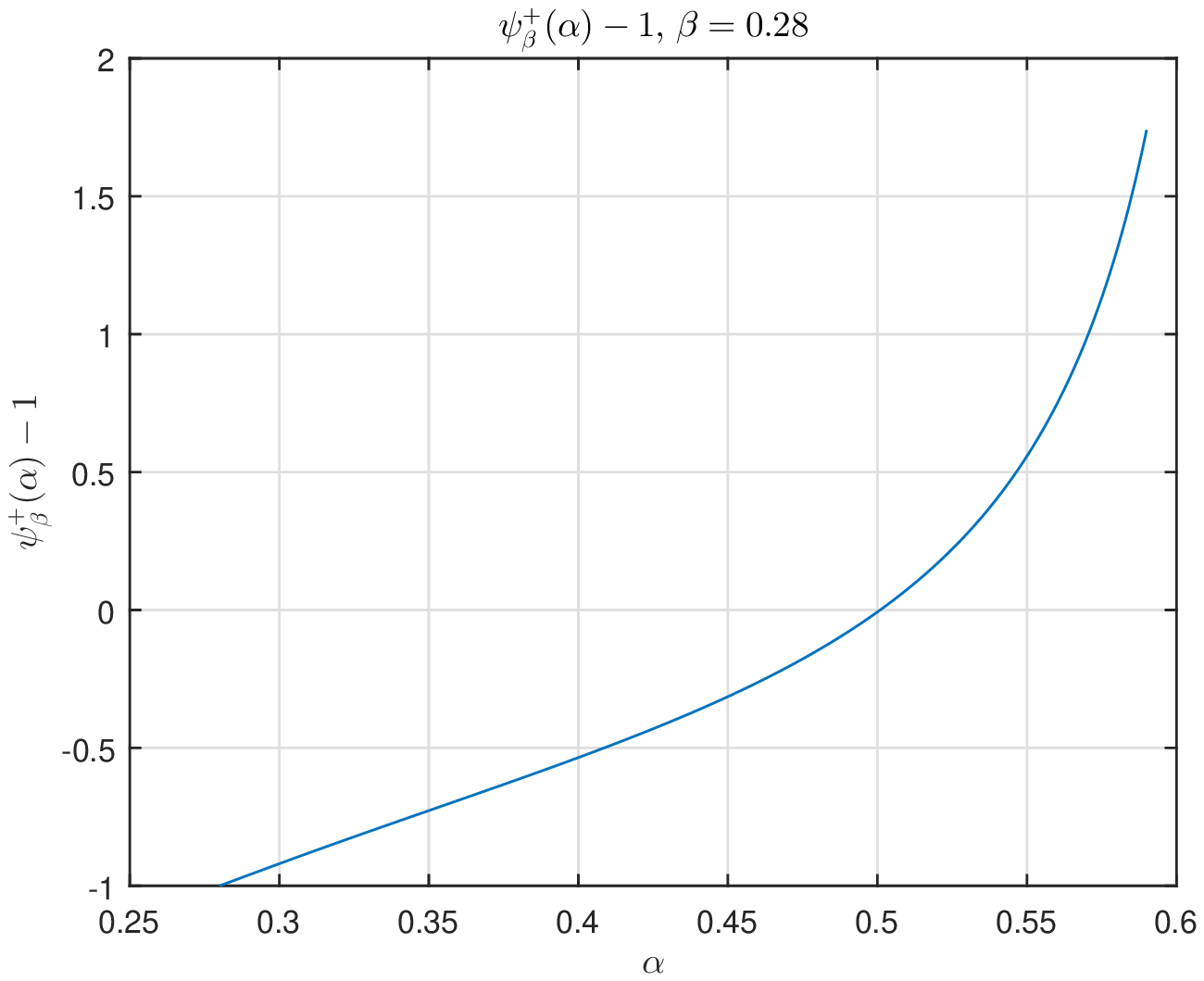,width=9cm,height=7cm}}
\end{minipage}
\begin{minipage}[b]{.5\linewidth}
\centering
\centerline{\epsfig{figure=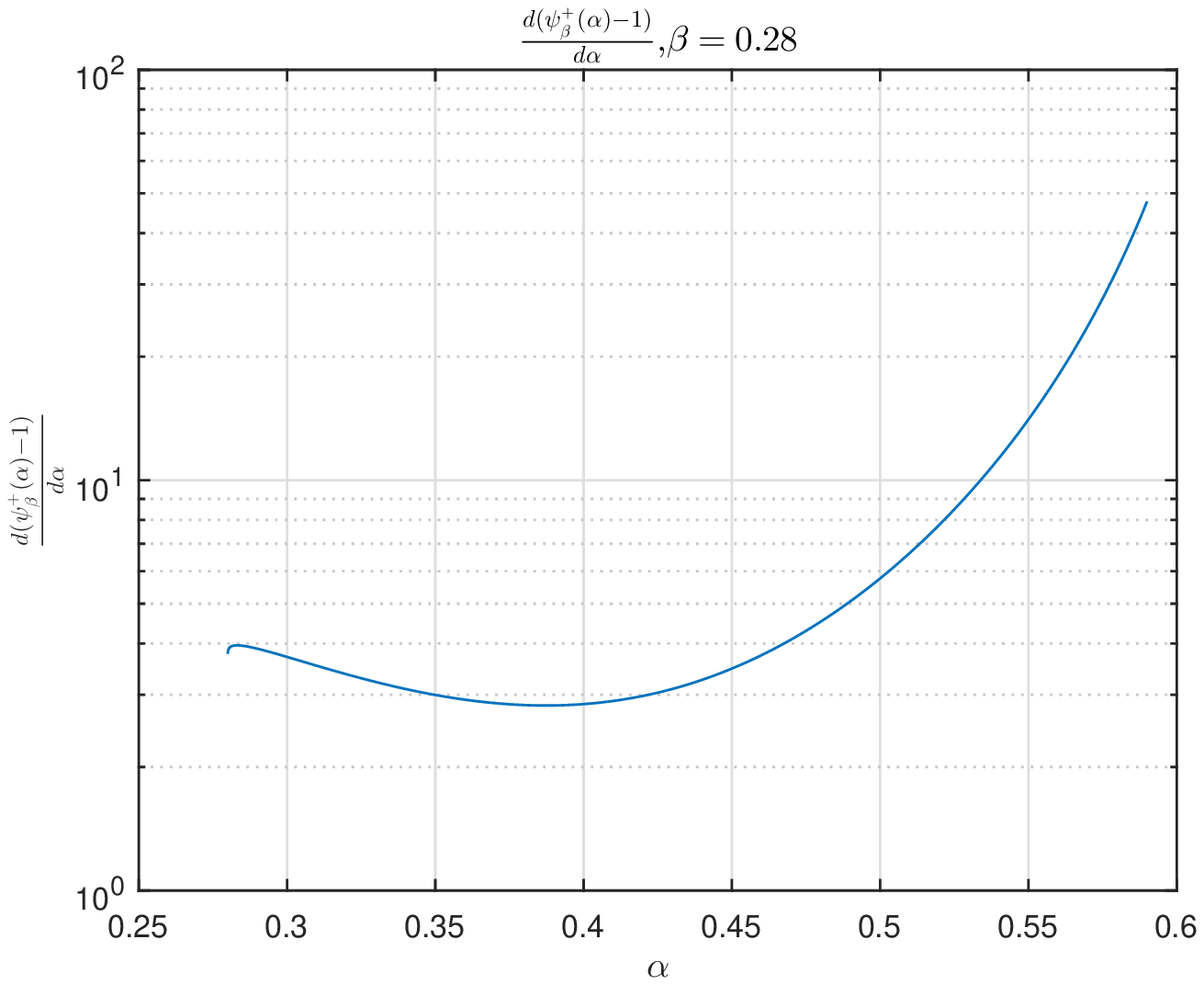,width=9cm,height=7cm}}
\end{minipage}
\caption{Properties of $\psi^+_\beta(\alpha)$: $\psi^+_\beta(\alpha)-1$ as a function of $\alpha$ ($\beta=0.28$) -- left; $\frac{d(\psi^+_\beta(\alpha)-1)}{d\alpha}$ as a function of $\alpha$ ($\beta=0.28$) -- right}
\label{fig:proppsinonn}
\end{figure}

Finally, to give a little bit of a flavor as to what is actually proven in Theorem \ref{thm:thmweakthrnonn} we in Figure \ref{fig:weakl1PTnonn} show the theoretical PT curve that one can obtain based on (\ref{eq:thmweaktheta2nonn}) as well as how it fits the corresponding one obtained through a high-dimensional geometry approach in \cite{DT,DonohoSigned}.
\begin{figure}[htb]
\centering
\centerline{\epsfig{figure=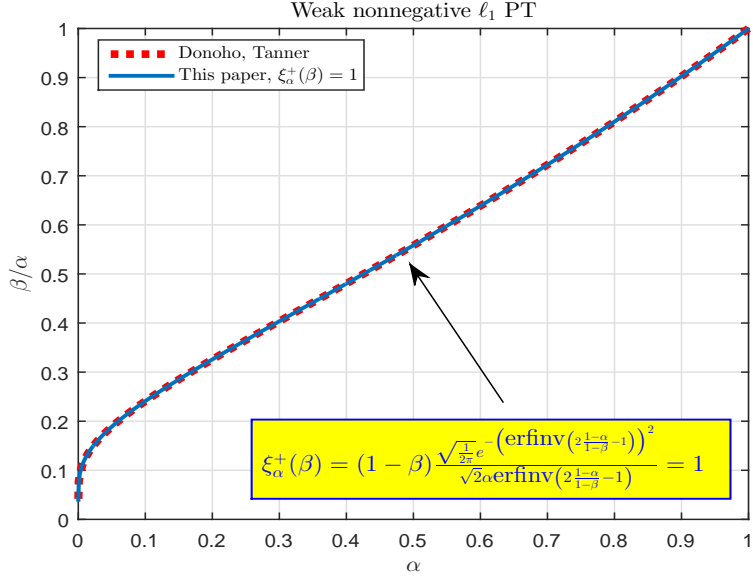,width=11.5cm,height=8cm}}
\caption{Nonnegative $\ell_1$'s weak PT; $\{(\alpha,\beta)|\xi^+_{\alpha}(\beta)=1\}$}
\label{fig:weakl1PTnonn}
\end{figure}


\section{Large deviations -- nonnegative vectors}
\label{sec:ldpnonn}

In the previous section we showed how one can translate the phase transition results obtained for general vectors to the case of nonnegative vectors. In this section we will do the same for the results that relate to the LDP. We will do that through a novel probabilistic concept we presented in Section \ref{sec:ldp}. To insure that there is as little of repetition as possible we will try to skip all the details that are obvious and/or the same as for general vectors and instead will insist only on those that are substantially different.

We will again for the simplicity and without loss of generality assume that the elements $\x_{1},\x_{2},\dots,\x_{n-k}$ of $\x$ are equal to zero and that the elements $\x_{n-k+1},\x_{n-k+2},\dots,\x_n$ all are positive (this time though, this fact is assumed a priori known and as such potentially could be used in the algorithm's design (as actually is in (\ref{eq:l1nonn}))). The following analogue to (\ref{eq:thmeqgenweak}) was proved in \cite{StojnicCSetam09,StojnicICASSP09} and is one of the key features in everything that follows.
\begin{theorem}(\cite{StojnicCSetam09,StojnicICASSP09} Nonzero elements of $\x$ are a priori known to be positive and their location is fixed)
Assume that an $m\times n$ measurement matrix $A$ is given. Let $\x$
be a $k$ sparse vector a priori known to have nonnegative components. Also let $\x_1=\x_2=\dots=\x_{n-k}=0$. Further, assume that $\y=A\x$ and that $\w$ is
a $n\times 1$ vector. Then (\ref{eq:l1nonn}) will
produce the solution of (\ref{eq:system}) if
\begin{equation}
(\forall \w\in \mR^{n} | A\w=0) \quad  -\sum_{i=n-k+1}^n \w_i<\sum_{i=1}^{n-k}\w_{i}, \w_i\geq 0,1\leq i\leq n-k.
\label{eq:thmeqgenweaknonn}
\end{equation}\label{thm:thmgenweaknonn}
\end{theorem}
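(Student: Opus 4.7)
The plan is to mimic the null-space style argument that underlies Theorem \ref{thm:thmgenweak}, exploiting the extra a priori nonnegativity of both the true $\x$ and every feasible iterate of (\ref{eq:l1nonn}) to sharpen the null-space condition into (\ref{eq:thmeqgenweaknonn}). Let $\hat{\x}$ be any minimizer of (\ref{eq:l1nonn}) and set $\w=\hat{\x}-\x$. Since $A\hat{\x}=A\x=\y$, we immediately have $A\w=\0$. Moreover $\x_i=0$ for $1\leq i\leq n-k$ while $\hat{\x}_i\geq 0$ by feasibility of (\ref{eq:l1nonn}), so $\w_i=\hat{\x}_i\geq 0$ for those same indices, placing $\w$ exactly in the class of vectors quantified over in the hypothesis.

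The key calculation is to evaluate $\|\hat{\x}\|_1-\|\x\|_1$. Since every component of $\hat{\x}$ is nonnegative, absolute values can be dropped and $\|\hat{\x}\|_1=\sum_{i=1}^{n}\hat{\x}_i$. Similarly $\|\x\|_1=\sum_{i=n-k+1}^{n}\x_i$ because $\x\geq\0$ with a zero head. Substituting $\hat{\x}_i=\x_i+\w_i$ and splitting into the two blocks of coordinates yields
\[
\|\hat{\x}\|_1-\|\x\|_1=\sum_{i=1}^{n-k}\w_i+\sum_{i=n-k+1}^{n}\w_i.
\]
Optimality of $\hat{\x}$ (together with the feasibility of $\x$ for (\ref{eq:l1nonn})) forces $\|\hat{\x}\|_1\leq\|\x\|_1$, which rearranges to
\[
\sum_{i=1}^{n-k}\w_i\leq -\sum_{i=n-k+1}^{n}\w_i.
\]
For $\w\neq\0$, this directly negates (\ref{eq:thmeqgenweaknonn}). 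Hence $\w=\0$, i.e.\ $\hat{\x}=\x$, and any minimizer of (\ref{eq:l1nonn}) coincides with the planted vector.

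The only fine point I anticipate is the degenerate case $\w=\0$, where the hypothesis reads $0<0$ and cannot be invoked; but that case coincides precisely with the desired conclusion $\hat{\x}=\x$, so no real obstacle arises. Compared with Theorem \ref{thm:thmgenweak}, the sole structural change is that the nonnegativity constraint baked into (\ref{eq:l1nonn}) forces $\w_i\geq 0$ on the zero block, which collapses $|\w_i|$ to $\w_i$ and accounts for the disappearance of the absolute values present in (\ref{eq:thmeqgenweak}) when one passes from (\ref{eq:l1}) to (\ref{eq:l1nonn}).
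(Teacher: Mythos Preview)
Your argument is correct and is precisely the standard null-space proof underlying this type of result. The paper itself does not give a proof of this theorem here; it simply cites \cite{StojnicCSetam09,StojnicICASSP09} where the result was established. Your write-up captures the essential mechanism those references use: feasibility of $\hat{\x}$ in (\ref{eq:l1nonn}) forces $\w_i=\hat{\x}_i\geq 0$ on the zero block, and the nonnegativity constraint collapses $\|\cdot\|_1$ to a plain sum, so that optimality of $\hat{\x}$ yields an inequality that directly contradicts (\ref{eq:thmeqgenweaknonn}) unless $\w=\0$. Your handling of the degenerate $\w=\0$ case is also the right reading of the implicit ``for all nonzero $\w$'' convention.
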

To facilitate the exposition we set
\begin{equation}
\Sw^+\triangleq\{\w\in S^{n-1}| \quad -\sum_{i=n-k+1}^n \w_i<\sum_{i=1}^{n-k}\w_{i}, \w_i\geq 0,1\leq i\leq n-k.\label{eq:defSwprnonn}
\end{equation}

\subsection{Upper tail -- nonnegative $\x$}
\label{sec:uppertailnonn}

Replacing $\Sw$ by $\Sw^+$ and following what was presented in Section \ref{sec:uppertail} before (\ref{eq:ldpprob3}) we obtain
\begin{equation}
P^+_{err}\leq e^{-\frac{c_3^2}{2}}E\max_{\w\in S_w^+}\min_{\|\y\|_2=1}e^{-c_3(\y^T A\w+g)}\leq
e^{-\frac{c_3^2}{2}}Ee^{-c_3\|\g\|_2}Ee^{c_3w(\h,S_w^+)},
\label{eq:ldpprob3nonn}
\end{equation}
where $P^+_{err}$ is the probability that (\ref{eq:l1nonn}) fails to produce the a priori known to be nonnegative $k$-sparse solution of (\ref{eq:system}),
\begin{equation}
w(\h,\Sw^+)\triangleq\max_{\w\in \Sw^+} (\h^T\w), \label{eq:widthdefSwnonn}
\end{equation}
and the elements of $\h$ are i.i.d. standard normals. Following further what was done in Section \ref{sec:uppertail} (of course, trivially $\h_i$ and $-\h_i$ have the same distribution) we have
\begin{eqnarray}
w(\h,\Sw^+) & = & \min_{\nu\geq0,\gamma\geq 0} \frac{\sum_{i=1}^{n-k}\max(\h_i-\nu,0)^2+\sum_{i=n-k+1}^{n}(\h_i+\nu)^2}{4\gamma}+\gamma\nonumber \\
& = & \min_{\nu\geq0}\sqrt{\sum_{i=1}^{n-k}\max(\h_i-\nu,0)^2+\sum_{i=n-k+1}^{n}(\h_i+\nu)^2}.\label{eq:ldpwhSwnonn}
\end{eqnarray}
The following theorem provides a way to upper bound $P^+_{err}$ (that works for any integers $k$, $m$, and $n$; clearly, $k\leq m\leq n$ to ensure that the results make sense).
\begin{theorem}
Let $A$ be an $m\times n$ matrix in (\ref{eq:system})
with i.i.d. standard normal components. Let
the unknown $\x$ in (\ref{eq:system}) be $k$-sparse and a priori known to be nonnegative and let the location of its nonzero elements be arbitrarily chosen but fixed. Let $P^+_{err}$ be the probability that the solution of (\ref{eq:l1nonn}) is not the a priori known to be nonnegative $k$-sparse solution of (\ref{eq:system}). Then
\begin{equation}
P^+_{err}\leq \min_{c_3\geq 0}e^{-\frac{c_3^2}{2}}e^{-c_3\|\g\|_2}Ee^{c_3w(\h,S_w^+)}
=\min_{c_3\geq 0}\left (e^{-\frac{c_3^2}{2}}\frac{1}{\sqrt{2\pi}^m}\int_{\g}e^{-\sum_{i=1}^{m}\g_i^2/2-c_3\|\g\|_2}d\g \min_{\nu\geq 0,\gamma\geq\frac{c_3}{2}} w_{1,+}^{n-k}w_2^{k}e^{c_3\gamma}\right ),
\label{eq:ldpthm1perrub1nonn}
\end{equation}
where
\begin{eqnarray}
w_{1,+} &=& \frac{1}{\sqrt{2\pi}}\int_{h}e^{-h^2/2}e^{c_3\max(h-\nu,0)^2/4/\gamma}dh
  =\frac{1}{2}\lp\frac{e^{\frac{c_3\nu^2/4/\gamma}{1-c_3/2/\gamma}}}{\sqrt{1-c_3/2/\gamma}}\erfc\left (\frac{\nu}{\sqrt{2}\sqrt{1-c_3/2/\gamma}}\right )+\erf\left (\frac{\nu}{\sqrt{2}}\right )+1\rp\nonumber \\
w_2 &=& \frac{e^{\frac{c_3\nu^2/4/\gamma}{1-c_3/2/\gamma}}}{\sqrt{1-c_3/2/\gamma}}.\label{eq:ldpthm1perrub2nonn}
\end{eqnarray}\label{thm:ldp1nonn}
\end{theorem}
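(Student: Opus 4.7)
The plan is to mirror, step by step, the derivation of Theorem \ref{thm:ldp1} in Section \ref{sec:uppertail}, replacing $\Sw$ by $\Sw^+$ throughout and re-doing the single Gaussian integral that genuinely depends on whether the first $n-k$ coordinates carry a sign constraint. First I would invoke Theorem \ref{thm:thmgenweaknonn} to write $P^+_{err}=P(\min_{\w\in \Sw^+}\|A\w\|_2\leq 0)=P(\max_{\w\in \Sw^+}\min_{\|\y\|_2=1}\y^T A\w\geq 0)$ and apply a Chernoff bound with $c_3\geq 0$ exactly as in (\ref{eq:ldpprob1}). Using $A\stackrel{d}{=}-A$ together with the Gordon-minimax decomposition of \cite{StojnicCSetam09,StojnicCSetamBlock09,StojnicBlockasymldpfinn15,StojnicLiftStrSec13} (which needs only $\Sw^+\subset S^{n-1}$) then yields the product bound $P^+_{err}\leq e^{-c_3^2/2}\,Ee^{-c_3\|\g\|_2}\,Ee^{c_3 w(\h,\Sw^+)}$, i.e.\ the analog of (\ref{eq:ldpprob3}). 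No new probabilistic ingredient is introduced at this stage.

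Next, I would handle $w(\h,\Sw^+)$ by the same Lagrangian dualization used to go from (\ref{eq:workww2}) to (\ref{eq:ldpwhSw}). The key structural change is that the nonnegativity constraint $\w_i\geq 0$ for $i\leq n-k$ now comes directly from Theorem \ref{thm:thmgenweaknonn} rather than from absorbing a sign into an absolute value, so the first $n-k$ entries of $\h$ appear as $\h_i$ itself rather than as $|\h_i|$; the distributional symmetry $\h_i\stackrel{d}{=}-\h_i$ handles the sign convention on the last $k$ coordinates and produces the $(\h_i+\nu)^2$ form. The result is exactly the closed form (\ref{eq:ldpwhSwnonn}). Bringing $\min_{\nu\geq 0,\,\gamma\geq c_3/2}$ outside the expectation (the lower bound $\gamma\geq c_3/2$ is forced by convergence of the quadratic Gaussian integrals below) and factorizing over the $n$ independent components of $\h$ converts $Ee^{c_3 w(\h,\Sw^+)}$ into $w_{1,+}^{n-k}w_2^{k}e^{c_3\gamma}$.

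Finally, I would evaluate the two one-dimensional Gaussian integrals that define $w_{1,+}$ and $w_2$. The $w_2$ integral is the completed-square integral that already appeared in the signed case and transfers verbatim. The $w_{1,+}$ integral is the only calculation with genuine novelty: splitting the Gaussian integral at $h=\nu$, the piece $h\leq\nu$ carries no exponential twist and integrates to $\tfrac{1}{2}(1+\erf(\nu/\sqrt{2}))$, while the piece $h\geq\nu$ is the half-line version of the $w_2$ completion-of-squares and contributes $\tfrac{1}{2}(1-c_3/2/\gamma)^{-1/2}\exp\left(\frac{c_3\nu^2/4/\gamma}{1-c_3/2/\gamma}\right)\erfc\left(\frac{\nu}{\sqrt{2}\sqrt{1-c_3/2/\gamma}}\right)$. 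Summing the two contributions gives the stated closed form for $w_{1,+}$. The main obstacle is purely bookkeeping: tracking the $\tfrac{1}{2}$ prefactors that distinguish half-line from full-line Gaussian integrals, and maintaining $\gamma\geq c_3/2$ so that every quadratic exponential remains integrable; no conceptual step beyond the Section \ref{sec:uppertail} machinery is required.
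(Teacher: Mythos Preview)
Your proposal is correct and follows essentially the same route as the paper: the paper's own proof simply states that it ``follows by trivially adapting arguments leading up to Theorem \ref{thm:ldp1},'' and the surrounding text (\ref{eq:ldpprob3nonn})--(\ref{eq:ldpwhSwnonn}) records exactly the adaptations you spell out, namely replacing $\Sw$ by $\Sw^+$, keeping $\h_i$ rather than $|\h_i|$ on the first $n-k$ coordinates, and redoing the single one-dimensional integral to obtain the $\tfrac{1}{2}(\cdots+1)$ form of $w_{1,+}$.
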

\begin{proof}
Follows by trivially adapting arguments leading up to Theorem \ref{thm:ldp1} and ultimately through the mechanisms developed in \cite{StojnicCSetam09,StojnicCSetamBlock09,StojnicBlockasymldpfinn15,StojnicLiftStrSec13}.
\end{proof}
Moreover, defining
\begin{equation}\label{eq:ldpasymp1nonn}
  I^+_{err}\triangleq\lim_{n\rightarrow\infty}\frac{\log{P^+_{err}}}{n},
\end{equation}
we have the following LDP type of theorem (essentially a nonnegative analogue to Theorem \ref{thm:ldp2}).
\begin{theorem}
Assume the setup of Theorem \ref{thm:ldp1nonn}. Further, let integers $m$, $k$, and $n$ be large ($k\leq m\leq n$) such that $\beta=\frac{k}{n}$ and $\alpha=\frac{m}{n}$ are constants independent of $n$. Assume that a pair $(\alpha,\beta)$  is given. Also, assume the following scaling: $c_3\rightarrow c_3\sqrt{n}$ and $\gamma\rightarrow\gamma\sqrt{n}$. Then
\begin{equation}
I^+_{err}(\alpha,\beta)\triangleq\lim_{n\rightarrow\infty}\frac{\log{P^+_{err}}}{n}
\leq \min_{c_3\geq 0}\left (-\frac{c_3^2}{2}+I_{sph}+\min_{\nu\geq 0,\gamma\geq 0} ((1-\beta)\log{w_{1,+}}+\beta\log{w_2}+c_3\gamma)\right )\triangleq I_{err,u}^{(ub,+)}(\alpha,\beta),
\label{eq:ldpthm2Ierrub1nonn}
\end{equation}
where $I_{sph}$, $\widehat{\gamma}$, $w_1$, and $w_2$ are as in Theorem \ref{thm:ldp2} and
\begin{equation}
w_{1,+} = \frac{1}{2}\lp\frac{e^{\frac{c_3\nu^2/4/\gamma}{1-c_3/2/\gamma}}}{\sqrt{1-c_3/2/\gamma}}\erfc\left (\frac{\nu}{\sqrt{2}\sqrt{1-c_3/2/\gamma}}\right )+\erf\left (\frac{\nu}{\sqrt{2}}\right )+1\rp=\frac{w_1+1}{2}.\label{eq:ldpthm2perrub2nonn}
\end{equation}\label{thm:ldp2nonn}
\end{theorem}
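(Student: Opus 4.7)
The proof reduces to an asymptotic analysis of the finite-$n$ upper bound established in Theorem \ref{thm:ldp1nonn}, in direct analogy with how Theorem \ref{thm:ldp2} was derived from Theorem \ref{thm:ldp1}. First, I would start from the bound (\ref{eq:ldpthm1perrub1nonn}) and apply the prescribed scaling $c_3\to c_3\sqrt{n}$ and $\gamma\to\gamma\sqrt{n}$. Under this substitution the ratio $c_3/(2\gamma)$ appearing inside the formulas for $w_{1,+}$ and $w_2$ is left invariant, so the functional expressions in (\ref{eq:ldpthm2perrub2nonn}) are literally the same as those in (\ref{eq:ldpthm1perrub2nonn}); only the prefactors $e^{-c_3^2/2}$ and $e^{c_3\gamma}$ pick up a factor of $n$ in their exponents, becoming $e^{-nc_3^2/2}$ and $e^{nc_3\gamma}$.

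Second, I would take logarithms, divide by $n$, and identify each limit separately. With $\beta=k/n$ we get
\begin{eqnarray*}
\tfrac{1}{n}\log e^{-nc_3^2/2} & = & -c_3^2/2,\\
\tfrac{1}{n}\log Ee^{-c_3\sqrt{n}\|\g\|_2} & \to & I_{sph},\\
\tfrac{1}{n}\log w_{1,+}^{n-k} & = & (1-\beta)\log w_{1,+},\\
\tfrac{1}{n}\log w_2^{k} & = & \beta\log w_2,\\
\tfrac{1}{n}\log e^{nc_3\gamma} & = & c_3\gamma,
\end{eqnarray*}
where the spherical limit is the one recalled in (\ref{eq:gamaiden2lift})--(\ref{eq:gamaiden3lift}) from \cite{StojnicMoreSophHopBnds10}. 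Since each pointwise $(c_3,\nu,\gamma)$ already yields a valid finite-$n$ upper bound on $P^+_{err}$, passing to the limit for any fixed such triple and then infimizing over $c_3\geq 0$, $\nu\geq 0$, $\gamma\geq 0$ produces the claimed right-hand side $I_{err,u}^{(ub,+)}(\alpha,\beta)$. This interchange of limit and infimum is precisely the device already used in deriving Theorem \ref{thm:ldp2}, so no new argument is required.

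Third, I would justify the identity $w_{1,+}=(w_1+1)/2$ that links the nonnegative case back to the general one. Starting from the integral definition of $w_{1,+}$ in (\ref{eq:ldpthm1perrub2nonn}), split the integration at $h=0$. For $h\leq 0$ one has $\max(h-\nu,0)=0$, so the exponential reduces to $1$ and the integral contributes exactly $\tfrac12$. For $h\geq 0$ one has $\max(h-\nu,0)=\max(|h|-\nu,0)$, and by symmetry of the Gaussian density in $\bar h$ this half-line contribution equals one half of the full-line integral defining $w_1$. Summing gives $w_{1,+}=\tfrac12+\tfrac{w_1}{2}$, as claimed in (\ref{eq:ldpthm2perrub2nonn}).

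The main obstacle I would expect is not the asymptotic identification itself, which is essentially a line-by-line adaptation of the general-$\x$ calculation, but rather the bookkeeping needed to show that the infimum in the finite-$n$ bound can legitimately be exchanged with the $n\to\infty$ limit. This is resolved by the standard two-sided argument used in \cite{StojnicBlockasymldpfinn15,StojnicLiftStrSec13}: fix any admissible $(c_3,\nu,\gamma)$, pass to the limit to get an $n$-independent upper bound on $I^+_{err}$, and then minimize over the parameters. Everything else is a mechanical carry-over from the general-$\x$ machinery, with the sole new ingredient being the formula $w_{1,+}=(w_1+1)/2$ established above.
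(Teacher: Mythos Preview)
Your proposal is correct and follows essentially the same approach as the paper, which simply states that the result ``follows as a trivial adaptation of Theorem \ref{thm:ldp2}'' via the spherical limit (\ref{eq:gamaiden2lift})--(\ref{eq:gamaiden3lift}). Your added justification of the identity $w_{1,+}=(w_1+1)/2$ by splitting the Gaussian integral at $h=0$ is a welcome elaboration of a formula the paper merely asserts.
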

\begin{proof} Follows as a trivial adaptation of Theorem \ref{thm:ldp2}.
\end{proof}

One can of course now numerically estimate the rates of $P^+_{err}$'s decay. In the following sections we will raise the bar a bit higher and provide a closed form solution to the above problem.

\subsection{A detailed analysis of $I_{err,u}^{(ub,+)}$}
\label{sec:analysisIerrnonn}

We start by recalling on
\begin{equation}\label{eq:detanalIerr1nonn}
  A_{0}\triangleq\sqrt{1-\frac{c_3}{2\gamma}}.
\end{equation}
One is then left with the following problem
\begin{equation}\label{eq:detanalIeer2nonn}
I_{err,u}^{(ub,+)}(\alpha,\beta)\triangleq \min_{c_3\geq 0,\nu\geq 0,A_0\leq 1}\zeta^+_{\alpha,\beta}(c_3,\nu,A_0)
\end{equation}
where
\begin{eqnarray}
\zeta^+_{\alpha,\beta}(c_3,\nu,A_0)&=&\left (-\frac{c_3^2}{2}+I_{sph}+(1-\beta)\log\lp{\frac{w_1+1}{2}}\rp+\beta\log{w_2}+\frac{c_3^2}{2(1-A_0^2)}\right ),\label{eq:detanalIeer3nonn}
\end{eqnarray}
and $I_{sph}$, $\widehat{\gamma}$, $w_1$, and $w_2$ are as in Theorems \ref{thm:ldp2} and \ref{thm:ldp2nonn}. We will now proceed by computing the derivatives of $\zeta^+_{\alpha,\beta}(c_3,\nu,A_0)$ with respect to $c_3$, $\nu$, and $A_0$.

\subsubsection{Handling the derivatives of $\zeta^+_{\alpha,\beta}(c_3,\nu,A_0)$}
\label{sec:derivativeszetanonn}

We start with the derivative with respect to $\nu$. Following (\ref{eq:detanalIeer4}) we have
\begin{eqnarray}
\frac{d\zeta^+_{\alpha,\beta}(c_3,\nu,A_0)}{d\nu}&=& \frac{\beta(1-A_0^2)\nu}{A_0^2}+\frac{1-\beta}{w_1+1}\left (\frac{(1-A_0^2)\nu}{A_0^2}\frac{e^{\frac{(1-A_0^2)\nu^2}{2A_0^2}}}{A_0}\erfc\left (\frac{\nu}{\sqrt{2}A_0}\right )-\frac{1-A_0^2}{A_0^2}\frac{2e^{-\frac{\nu^2}{2}}}{\sqrt{2}\sqrt{\pi}}\right)\nonumber \\
&=& \frac{1-A_0^2}{(w_1+1)A_0^2}\left (\beta\nu+\beta\nu\erf\left (\frac{\nu}{\sqrt{2}}\right )+\frac{\nu}{A_0}\frac{\erfc\left (\frac{\nu}{\sqrt{2}A_0}\right )}{e^{-\frac{(1-A_0^2)\nu^2}{2A_0^2}}}-(1-\beta)\sqrt{\frac{2}{\pi}}e^{-\frac{\nu^2}{2}}\right).\nonumber \\
.\label{eq:detanalIeer4nonn}
\end{eqnarray}
For the derivative with respect to $c_3$ one trivially has from (\ref{eq:detanalIeer9})
\begin{equation}
\frac{d\zeta^+_{\alpha,\beta}(c_3,\nu,A_0)}{dc_3}=\frac{d\zeta_{\alpha,\beta}(c_3,\nu,A_0)}{dc_3}
=-c_3+\frac{c_3}{1-A_0^2}+\frac{c_3-\sqrt{(c_3)^2+4\alpha}}{2}.
\label{eq:detanalIeer9nonn}
\end{equation}
Finally, for the derivative with respect to $A_0$ we have
\begin{eqnarray}
\frac{d\zeta^+_{\alpha,\beta}(c_3,\nu,A_0)}{dA_0}&=&\frac{d}{dA_0}\left (-\frac{c_3^2}{2}+I_{sph}+(1-\beta)\log{w_{1,+}}+\beta\log{w_2}+\frac{c_3^2}{2(1-A_0^2)}\right )\nonumber \\
&=& (1-\beta)\frac{d\log\lp\frac{w_1+1}{2}\rp}{dA_0}-\frac{\beta\nu^2}{A_0^3}-\frac{\beta A_0^2}{A_0^3}+\frac{c_3^2A_0}{(1-A_0^2)^2},\nonumber \\
\label{eq:detanalIeer10nonn}
\end{eqnarray}
and
\begin{equation}
\frac{d\log\lp\frac{w_1+1}{2}\rp}{dA_0}=\frac{d\log{(\frac{1}{A_0}e^{\frac{\nu^2}{2A_0^2}}\erfc(\frac{\nu}{\sqrt{2}A_0})+e^{\frac{\nu^2}{2}}(\erf(\frac{\nu}{\sqrt{2}})+1))}}{dA_0}
=
-\frac{e^{\frac{\nu^2}{2A_0^2}}(A_0^2+\nu^2)\erfc(\frac{\nu}{\sqrt{2}A_0})-\sqrt{\frac{2}{\pi}}A_0\nu}
{A_0^3(e^{\frac{\nu^2}{2A_0^2}}\erfc(\frac{\nu}{\sqrt{2}A_0})+A_0e^{\frac{\nu^2}{2}}(\erf(\frac{\nu}{\sqrt{2}})+1))}. \\
\label{eq:detanalIeer11nonn}
\end{equation}
As in Section \ref{sec:uppertail}, we will below select certain values for $c_3$, $\nu$, and $A_0$ and check if one has all of the above derivatives equal to zeros for such values. Before doing that we will recall that setting the derivative with respect to $c_3$ to zero implies that
\begin{equation}\label{eq:detanalIeer12nonn}
  c_3=\frac{(1-A_0^2)\sqrt{\alpha}}{A_0}.
\end{equation}
One then has the following adaptation of (\ref{eq:detanalIeer13})
\begin{equation}
\frac{d\zeta_{\alpha,\beta}(c_3,\nu,A_0)}{dA_0}= (1-\beta)\frac{d\log\lp\frac{w_1+1}{2}\rp}{dA_0}+\frac{(\alpha-\beta) A_0^2-\beta\nu^2}{A_0^3}.
\label{eq:detanalIeer13nonn}
\end{equation}

\subsubsection{Selecting the values for $c_3$, $\nu$, and $A_0$}
\label{sec:valuesc3nuA0nonn}

Now, we will select $\nu$ and $A_0$ in the following way. Let $\beta_w$ be the solution of the fundamental nonnegative $\ell_1$ PT obtained for our given $\alpha$, i.e. let $\beta_w$ be such that
\begin{equation}\label{eq:selvalc3nuA01nonn}
  \xi_{\alpha}(\beta_w)=(1-\beta_w)\frac{\sqrt{\frac{1}{2\pi}}e^{-\lp\erfinv\lp 2\frac{1-\alpha}{1-\beta_w}-1\rp\rp^2}}{\alpha\sqrt{2}\erfinv \lp 2\frac{1-\alpha}{1-\beta_w}-1\rp}=1.
\end{equation}
Then we set
\begin{equation}\label{eq:selvalc3nuA02nonn}
  \nu=\sqrt{2}\erfinv \left (2\frac{1-\alpha}{1-\beta_w}-1\right ).
\end{equation}
Further let $\beta_0$ be such that
\begin{equation}\label{eq:selvalc3nuA03nonn}
  \frac{\alpha-\beta}{\alpha-\beta_0}\xi_{\alpha}(\beta_0)=\frac{\alpha-\beta}{\alpha-\beta_0}(1-\beta_0)
  \frac{\sqrt{\frac{1}{2\pi}}e^{-\lp\erfinv\lp 2\frac{1-\alpha}{1-\beta_0}-1\rp\rp^2}}{\alpha\sqrt{2}\erfinv \lp 2\frac{1-\alpha}{1-\beta_0}-1\rp}=1.
\end{equation}
Then we set
\begin{equation}\label{eq:selvalc3nuA04nonn}
  A_0=\frac{\erfinv \left (2\frac{1-\alpha}{1-\beta_w}-1\right )}{\erfinv \left (2\frac{1-\alpha}{1-\beta_0}-1\right )}=\frac{\nu}{\sqrt{2}\erfinv \left (2\frac{1-\alpha}{1-\beta_0}-1\right )}.
\end{equation}
Finally combining (\ref{eq:detanalIeer12nonn}) and (\ref{eq:selvalc3nuA04nonn}) we obtain the value for $c_3$
\begin{equation}\label{eq:selvalc3nuA05nonn}
  c_3=\frac{(1-A_0^2)\sqrt{\alpha}}{A_0}=\frac{\left (\erfinv \left (2\frac{1-\alpha}{1-\beta_0}-1\right )\right )^2- \left (\erfinv\left (2\frac{1-\alpha}{1-\beta_w}-1\right )\right )^2}{\erfinv \left (2\frac{1-\alpha}{1-\beta_0}-1\right )\erfinv \left (2\frac{1-\alpha}{1-\beta_w}-1\right )}\sqrt{\alpha}.
\end{equation}

\subsubsection{Rechecking the derivatives}
\label{sec:recheckdernonn}

In this subsection we take the above selected values for $c_3$, $\nu$, and $A_0$ and recheck if they indeed ensure that the derivatives are equal to zero. As in Section \ref{sec:recheckder}, that basically amounts to checking if the expressions on the right hand sides of (\ref{eq:detanalIeer4nonn}), (\ref{eq:detanalIeer9nonn}), and (\ref{eq:detanalIeer11nonn}) are equal to zero if $\nu$, $A_0$, and $c_3$ are as in (\ref{eq:selvalc3nuA02nonn}), (\ref{eq:selvalc3nuA04nonn}), and (\ref{eq:selvalc3nuA05nonn}), respectively and $\beta_w$ and $\beta_0$ are as in (\ref{eq:selvalc3nuA01nonn}) and (\ref{eq:selvalc3nuA03nonn}), respectively.

As in Section \ref{sec:recheckder} we observe that (\ref{eq:detanalIeer9nonn}) is trivially satisfied by the way how we chose $c_3$. Combining (\ref{eq:detanalIeer4nonn}), (\ref{eq:selvalc3nuA02nonn}), and (\ref{eq:selvalc3nuA04nonn}) we obtain
\begin{eqnarray}
\frac{d\zeta^+_{\alpha,\beta}(c_3,\nu,A_0)}{d\nu}&=& \frac{1-A_0^2}{(w_1+1)A_0^2}\left (\beta\nu\lp\erf\left (\frac{\nu}{\sqrt{2}}\right )+1\rp+\frac{\nu}{A_0}\frac{\erfc\left (\frac{\nu}{\sqrt{2}A_0}\right )}{e^{-\frac{(1-A_0^2)\nu^2}{2A_0^2}}}-(1-\beta)\sqrt{\frac{2}{\pi}}e^{-\frac{\nu^2}{2}}\right)\nonumber \\
&=&\frac{1-A_0^2}{(w_1+1)A_0^2}\left (\beta\nu\left (2\frac{1-\alpha}{1-\beta_w}-1+1\right )
+\frac{\nu}{A_0}\frac{\erfc\left (\frac{\nu}{\sqrt{2}A_0}\right )}{e^{-\frac{(1-A_0^2)\nu^2}{2A_0^2}}}
-\frac{2(1-\beta)\sqrt{2}\alpha\erfinv \left (2\frac{1-\alpha}{1-\beta_w}-1\right )}{1-\beta_w}\right)\nonumber \\
&=&\frac{1-A_0^2}{(w_1+1)A_0^2}\left (
\frac{\nu}{A_0}\frac{\erfc\left (\frac{\nu}{\sqrt{2}A_0}\right )}{e^{-\frac{(1-A_0^2)\nu^2}{2A_0^2}}}
-\frac{2(\alpha-\beta)\sqrt{2}\erfinv \left (2\frac{1-\alpha}{1-\beta_w}-1\right )}{1-\beta_w}\right)\nonumber \\
&=&\frac{1-A_0^2}{(w_1+1)A_0^2e^{-\frac{(1-A_0^2)\nu^2}{2A_0^2}}}\left (
\frac{\nu}{A_0}\erfc\left (\frac{\nu}{\sqrt{2}A_0}\right )
-\frac{2(\alpha-\beta)\sqrt{2}\erfinv \left (2\frac{1-\alpha}{1-\beta_w}-1\right )}{1-\beta_w}e^{-\frac{(1-A_0^2)\nu^2}{2A_0^2}}\right)\nonumber \\
&=&\frac{1-A_0^2}{(w_1+1)A_0^2e^{-\frac{(1-A_0^2)\nu^2}{2A_0^2}}}\left (
\frac{\nu}{A_0}\erfc\left (\frac{\nu}{\sqrt{2}A_0}\right )
-\frac{(\alpha-\beta)\sqrt{\frac{2}{\pi}}e^{-\frac{\nu^2}{2}}}{\alpha}e^{-\frac{(1-A_0^2)\nu^2}{2A_0^2}}\right)\nonumber \\
&=&\frac{1-A_0^2}{\alpha (w_1+1)A_0^2e^{-\frac{(1-A_0^2)\nu^2}{2A_0^2}}}\left (
\frac{\alpha\nu}{A_0}\erfc\left (\frac{\nu}{\sqrt{2}A_0}\right )
-(\alpha-\beta)\sqrt{\frac{2}{\pi}}e^{-\frac{\nu^2}{2A_0^2}}\right)\nonumber \\
&=&\frac{(1-A_0^2)(w_1+1)^{-1}}{\alpha A_0^2e^{-\frac{(1-A_0^2)\nu^2}{2A_0^2}}}\left (
\alpha\sqrt{2}\erfinv\left (\frac{1-\alpha}{1-\beta_0}\right )\left (2\frac{\alpha-\beta_0}{1-\beta_0}\right )
-(\alpha-\beta)\sqrt{\frac{2}{\pi}}e^{-\left (\erfinv\left (2\frac{1-\alpha}{1-\beta_0}-1\right )\right )^2}\right)\nonumber \\
& = & 0,\label{eq:recheck1nonn}
\end{eqnarray}
where all the equalities follow in exactly the same way as (\ref{eq:recheck1}). The above then confirms that the choice we made in (\ref{eq:selvalc3nuA01nonn})-(\ref{eq:selvalc3nuA05nonn}) indeed ensures that $\frac{d\zeta^+_{\alpha,\beta}(c_3,\nu,A_0)}{d\nu}=0$.

Now, to check the derivative with respect to $A_0$ we look at a combination of (\ref{eq:detanalIeer11}) and (\ref{eq:detanalIeer13}) to find
\begin{eqnarray}
\frac{d\zeta^+_{\alpha,\beta}(c_3,\nu,A_0)}{dA_0} & = &  (1-\beta)\frac{d\log\lp\frac{w_1+1}{2}\rp}{dA_0}+\frac{(\alpha-\beta) A_0^2-\beta\nu^2}{A_0^3}\nonumber \\
& = & -(1-\beta)\frac{e^{\frac{\nu^2}{2A_0^2}}(A_0^2+\nu^2)\erfc(\frac{\nu}{\sqrt{2}A_0})-\sqrt{\frac{2}{\pi}}A_0\nu}
{A_0^3(e^{\frac{\nu^2}{2A_0^2}}\erfc(\frac{\nu}{\sqrt{2}A_0})+A_0e^{\frac{\nu^2}{2}}(\erf(\frac{\nu}{\sqrt{2}})+1))}+\frac{(\alpha-\beta) A_0^2-\beta\nu^2}{A_0^3}\nonumber \\
& = & -(1-\beta)\frac{(A_0^2+\nu^2)\frac{\alpha-\beta}{\alpha}\sqrt{\frac{2}{\pi}}\frac{A_0}{\nu}-\sqrt{\frac{2}{\pi}}A_0\nu}
{A_0^3(\frac{\alpha-\beta}{\alpha}\sqrt{\frac{2}{\pi}}\frac{A_0}{\nu}+A_0e^{\frac{\nu^2}{2}}(\erf(\frac{\nu}{\sqrt{2}})+1))}+\frac{(\alpha-\beta) A_0^2-\beta\nu^2}{A_0^3}\nonumber \\
& = & -(1-\beta)\frac{(A_0^2+\nu^2)\frac{\alpha-\beta}{\alpha}\sqrt{\frac{2}{\pi}}\frac{A_0}{\nu}-\sqrt{\frac{2}{\pi}}A_0\nu}
{A_0^3(\frac{\alpha-\beta}{\alpha}\sqrt{\frac{2}{\pi}}\frac{A_0}{\nu}+\frac{1-\alpha}{\alpha}\sqrt{\frac{2}{\pi}}\frac{A_0}{\nu})}+\frac{(\alpha-\beta) A_0^2-\beta\nu^2}{A_0^3}\nonumber \\
& = & -\frac{(A_0^2+\nu^2)(\alpha-\beta)-\alpha\nu^2}
{A_0^3}
+\frac{(\alpha-\beta) A_0^2-\beta\nu^2}{A_0^3}\nonumber \\
& = & 0,\label{eq:recheck2nonn}
\end{eqnarray}
where we used a reasoning similar to the one employed in the derivation of (\ref{eq:recheck2}). Namely, in (\ref{eq:recheck2nonn}), the third equality holds through a combination of equalities six and eight in (\ref{eq:recheck1nonn}), the fourth equality follows because of (\ref{eq:selvalc3nuA01nonn}) and (\ref{eq:selvalc3nuA02nonn}), and the remaining ones follow through basic algebraic transformations. (\ref{eq:recheck2nonn}) then confirms that the choice we made in (\ref{eq:selvalc3nuA01nonn})-(\ref{eq:selvalc3nuA05nonn}) indeed ensures that $\frac{d\zeta^+_{\alpha,\beta}(c_3,\nu,A_0)}{dA_0}=0$. One can then proceed and check the second derivatives to ensure that the this selection in fact is the global optimum. That can be done both analytically and numerically. Analytical computations are more involved and we refrain from presenting them as they don't bring any novel ideas. As in Section \ref{sec:ldp}, instead of that we will in the following sections prove that the choice (\ref{eq:selvalc3nuA01nonn})-(\ref{eq:selvalc3nuA05nonn}) is not only precisely the one that solves the optimization in (\ref{eq:detanalIeer2nonn}) but also precisely the one that determines $I^+_{err}$. Before doing that we will in the following subsection compute the value of $\zeta^+_{\alpha,\beta}(c_3,\nu,A_0)$ that one gets if $c_3$, $\nu$, and $A_0$ are as in (\ref{eq:selvalc3nuA01nonn})-(\ref{eq:selvalc3nuA05nonn}). As stated above and as will turn out later on, this  value of $\zeta^+_{\alpha,\beta}(c_3,\nu,A_0)$ will be precisely the $I^+_{err}$ from (\ref{eq:ldpasymp1nonn}) and Theorem \ref{thm:ldp2nonn}.

\subsubsection{Computing $\zeta^+_{\alpha,\beta}(c_3,\nu,A_0)$}
\label{sec:computingzetanonn}

As in Section \ref{sec:computingzeta}, before computing $\zeta^+_{\alpha,\beta}(c_3,\nu,A_0)$ we will first compute all other quantities in (\ref{eq:detanalIeer3nonn}), namely, $\widehat{\gamma}$, $I_{sph}$, $w_1$, and $w_2$. From (\ref{eq:computingzeta1}) and (\ref{eq:computingzeta2}) we again have
\begin{eqnarray}\label{eq:computingzeta1nonn}
  \widehat{\gamma} &=& -\frac{A_0\sqrt{\alpha}}{2}\nonumber \\
  I_{sph} & = &-\frac{(1-A_0^2)\alpha}{2}+\alpha\log(A_0).
\end{eqnarray}
For $w_1$ we have
\begin{equation}
w_1 +1=
  \frac{e^{\frac{(1-A_0^2)\nu^2}{2A_0^2}}}{A_0}\erfc\left (\frac{\nu}{\sqrt{2}A_0}\right )+\erf\left (\frac{\nu}{\sqrt{2}}\right )+1
  =\frac{\alpha-\beta}{\alpha\nu}\sqrt{\frac{2}{\pi}}e^{-\frac{\nu^2}{2}}+\erf\left (\frac{\nu}{\sqrt{2}}\right )+1
  =2\frac{\alpha-\beta}{1-\beta_w}+2\frac{1-\alpha}{1-\beta_w}=2\frac{1-\beta}{1-\beta_w},\label{eq:computingzeta3nonn}
\end{equation}
where the second equality follows by a combination of the sixth and the eight equality in (\ref{eq:recheck1nonn}) while the third equality follows by a combination of (\ref{eq:selvalc3nuA01nonn}) and (\ref{eq:selvalc3nuA02nonn}). Finally utilizing (\ref{eq:selvalc3nuA04nonn}) and (\ref{eq:computingzeta3nonn}) we have for $w_2$
\begin{equation}\label{eq:computingzeta4nonn}
  w_2 =
  \frac{e^{\frac{(1-A_0^2)\nu^2}{2A_0^2}}}{A_0}=2\frac{\alpha-\beta}{1-\beta_w}\frac{1}{\erfc\left (\frac{\nu}{\sqrt{2}A_0}\right )}
  =2\frac{\alpha-\beta}{1-\beta_w}\frac{1}{1-\erf\left (\frac{\nu}{\sqrt{2}A_0}\right )}
  =2\frac{\alpha-\beta}{1-\beta_w}\frac{1}{2-2\frac{1-\alpha}{1-\beta_0}}
  =\frac{\alpha-\beta}{\alpha-\beta_0}\frac{1-\beta_0}{1-\beta_w}.
\end{equation}
A combination of (\ref{eq:detanalIeer3nonn}), (\ref{eq:computingzeta1nonn}), (\ref{eq:computingzeta3nonn}), and (\ref{eq:computingzeta4nonn}) then gives
\begin{equation}\label{eq:computingzeta5nonn}
\zeta^+_{\alpha,\beta}(c_3,\nu,A_0)= \alpha\log\left (\frac{\erfinv\left (2\frac{1-\alpha}{1-\beta_w}-1\right )}{\erfinv\left (2\frac{1-\alpha}{1-\beta_0}-1\right )}\right )+(1-\beta)\log{\left (\frac{1-\beta}{1-\beta_w}\right )}+\beta\log{\left (\frac{\alpha-\beta}{\alpha-\beta_0}\frac{1-\beta_0}{1-\beta_w}\right )}.
\end{equation}
We summarize the above results in the following theorem.
\begin{theorem}
Assume the setup of Theorem \ref{thm:ldp2nonn} and assume that a pair $(\alpha,\beta)$  is given. Let $\alpha>\alpha_w$ where $\alpha_w$ is such that $\psi^+_\beta(\alpha_w)=\xi^+_{\alpha_w}(\beta)=1$. Also let $\beta_w$ satisfy the following \textbf{fundamental characterization of the nonnegative $\ell_1$'s PT:}

\begin{equation}\label{eq:thmldp3l1PTnonn}
\xi^+_{\alpha}(\beta_w)\triangleq
(1-\beta_w)\frac{\sqrt{\frac{1}{2\pi}}e^{-\lp\erfinv\lp 2\frac{1-\alpha}{1-\beta_w}-1\rp\rp^2}}{\alpha\sqrt{2}\erfinv \lp 2\frac{1-\alpha}{1-\beta_w}-1\rp}=1.
\end{equation}

\noindent Further let $\beta_0$ satisfy the following \textbf{fundamental characterization of the nonnegative $\ell_1$'s LDP:}

\begin{equation}\label{eq:thmldp3l1LDPnonn}
\frac{\alpha-\beta}{\alpha-\beta_0}\xi^+_{\alpha}(\beta_0)=\frac{\alpha-\beta}{\alpha-\beta_0}
(1-\beta_0)\frac{\sqrt{\frac{1}{2\pi}}e^{-\lp\erfinv\lp 2\frac{1-\alpha}{1-\beta_0}-1\rp\rp^2}}{\alpha\sqrt{2}\erfinv \lp2\frac{1-\alpha}{1-\beta_0}-1\rp}=1.
\end{equation}

\noindent Then
\begin{eqnarray}
I^+_{err}(\alpha,\beta)& \triangleq &\lim_{n\rightarrow\infty}\frac{\log{P^+_{err}}}{n}\nonumber \\
& \leq &
\alpha\log\left (\frac{\erfinv\left (2\frac{1-\alpha}{1-\beta_w}-1\right )}{\erfinv\left (2\frac{1-\alpha}{1-\beta_0}-1\right )}\right )+(1-\beta)\log\left (\frac{1-\beta}{1-\beta_w}\right ) +\beta\log\left (\frac{(\alpha-\beta)(1-\beta_0)}{(\alpha-\beta_0)(1-\beta_w)}\right )\nonumber \\
&\triangleq & I^+_{ldp}(\alpha,\beta).\nonumber \\
\label{eq:ldpthm3Ierrub1nonn}
\end{eqnarray}
Moreover, the following choice for $\nu$, $c_3$, and $\gamma$ in the optimization problem in Theorem \ref{thm:ldp2} achieves the right hand side of (\ref{eq:ldpthm3Ierrub1nonn})
\begin{eqnarray}
  \nu &  = & \sqrt{2}\erfinv \left (2\frac{1-\alpha}{1-\beta_w}-1\right )\nonumber \\
    A_0&=&\frac{\erfinv \left (2\frac{1-\alpha}{1-\beta_w}-1\right )}{\erfinv \left (2\frac{1-\alpha}{1-\beta_0}-1\right )}=\frac{\nu}{\sqrt{2}\erfinv \left (2\frac{1-\alpha}{1-\beta_0}-1\right )}\nonumber \\
      c_3& = &\frac{(1-A_0^2)\sqrt{\alpha}}{A_0}=\frac{\left (\erfinv \left (2\frac{1-\alpha}{1-\beta_0}-1\right )\right )^2- \left (\erfinv\left (2\frac{1-\alpha}{1-\beta_w}-1\right )\right )^2}{\erfinv \left (2\frac{1-\alpha}{1-\beta_0}-1\right )\erfinv \left (2\frac{1-\alpha}{1-\beta_w}-1\right )}\sqrt{\alpha}\nonumber \\
      \gamma&=&\frac{c_3}{2(1-A_0^2)}=\frac{\sqrt{\alpha}}{2A_0}=\frac{\sqrt{\alpha}\erfinv \left (2\frac{1-\alpha}{1-\beta_0}-1\right )}{2\erfinv \left (2\frac{1-\alpha}{1-\beta_w}-1\right )}.
\label{eq:ldpthm3perrub2nonn}
\end{eqnarray}\label{thm:ldp3nonn}
\end{theorem}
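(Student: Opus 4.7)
My plan is to parallel, step for step, the proof of Theorem \ref{thm:ldp3} carried out in Sections \ref{sec:analysisIerr}--\ref{sec:computingzeta}, exploiting the fact that the only difference between $\zeta_{\alpha,\beta}$ (general case) and $\zeta^+_{\alpha,\beta}$ (nonnegative case) in (\ref{eq:detanalIeer3nonn}) is the replacement $w_1 \mapsto (w_1+1)/2$ in the $(1-\beta)\log(\cdot)$ term. I would begin by invoking Theorem \ref{thm:ldp2nonn} to express $I^+_{err}(\alpha,\beta)$ as the min of $\zeta^+_{\alpha,\beta}(c_3,\nu,A_0)$ over $c_3\geq 0$, $\nu\geq 0$, $A_0\leq 1$, with the reparametrization $A_0 = \sqrt{1 - c_3/(2\gamma)}$. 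The derivative of $\zeta^+_{\alpha,\beta}$ with respect to $c_3$ coincides with that of $\zeta_{\alpha,\beta}$ (the $w_{1,+}$ term does not depend on $c_3$ through any piece not already present), so setting it to zero immediately yields $c_3 = (1-A_0^2)\sqrt{\alpha}/A_0$ as in (\ref{eq:detanalIeer12nonn}).

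Next I would compute $\frac{d\zeta^+_{\alpha,\beta}}{d\nu}$ and $\frac{d\zeta^+_{\alpha,\beta}}{dA_0}$ using the chain rule on $\log((w_1+1)/2)$, producing (\ref{eq:detanalIeer4nonn}) and (\ref{eq:detanalIeer10nonn})--(\ref{eq:detanalIeer13nonn}). The key structural observation is that the $+1$ inside $w_1+1$ gets matched exactly by the change $\erfinv\big(\frac{1-\alpha}{1-\beta}\big) \mapsto \erfinv\big(2\frac{1-\alpha}{1-\beta}-1\big)$ appearing in the nonnegative PT and LDP characterizations (\ref{eq:thmldp3l1PTnonn}) and (\ref{eq:thmldp3l1LDPnonn}). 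Concretely, I would make the candidate choices for $\nu$, $A_0$, $c_3$ as in (\ref{eq:selvalc3nuA02nonn})--(\ref{eq:selvalc3nuA05nonn}) (with $\beta_w$ from (\ref{eq:selvalc3nuA01nonn}) and $\beta_0$ from (\ref{eq:selvalc3nuA03nonn})) and then carry out the algebraic verifications (\ref{eq:recheck1nonn}) and (\ref{eq:recheck2nonn}) showing that both derivatives vanish. The delicate identity $\erf(\nu/\sqrt{2})+1 = 2(1-\alpha)/(1-\beta_w)$, as opposed to $\erf(\nu/\sqrt{2}) = (1-\alpha)/(1-\beta_w)$ in the general case, is exactly what makes the nonnegative fundamental characterizations (with $2(1-\alpha)/(1-\beta_w)-1$ inside the inverse error functions) come out correctly.

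Having established that the proposed triple $(c_3,\nu,A_0)$ is a stationary point, I would then substitute back to evaluate $\zeta^+_{\alpha,\beta}$ following (\ref{eq:computingzeta1nonn})--(\ref{eq:computingzeta5nonn}): the sphere contribution gives $-\tfrac{(1-A_0^2)\alpha}{2} + \alpha\log A_0$, the penalty $\frac{c_3^2}{2(1-A_0^2)}$ cancels $-c_3^2/2$ using $c_3 = (1-A_0^2)\sqrt{\alpha}/A_0$, $w_1+1 = 2(1-\beta)/(1-\beta_w)$, and $w_2 = \frac{\alpha-\beta}{\alpha-\beta_0}\cdot\frac{1-\beta_0}{1-\beta_w}$. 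Collecting the three logarithmic terms yields exactly $I^+_{ldp}(\alpha,\beta)$ as claimed in (\ref{eq:ldpthm3Ierrub1nonn}).

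The main obstacle, as in the general case, is not the identification of the stationary point but the verification that it is the \emph{global} minimum of a non-obviously-convex three-variable optimization. I see two reasonable ways to close this gap. The first is the second-derivative route mentioned after (\ref{eq:recheck2nonn}): compute the Hessian of $\zeta^+_{\alpha,\beta}$ at the candidate point and show it is positive definite, using the elementary $\erfc$ inequalities (\ref{eq:hdg10a}) to bound the awkward $\erfc$-terms analogously to (\ref{eq:hdg9})--(\ref{eq:hdg11}). The second, and in my view cleaner, route is to mirror the structure of Section \ref{sec:hdg}: develop the nonnegative high-dimensional geometry counterpart where $\psiint$ takes the form $\min_{y\geq 0}(\alpha y^2 + (\alpha-\beta)\log \erfc(y))$ with the argument $2\frac{1-\alpha}{1-\beta_0}-1$ in place of $\frac{1-\alpha}{1-\beta_0}$, use the same convexity argument (\ref{eq:hdg6})--(\ref{eq:hdg11}) to prove it has a unique global minimizer, and then show the two approaches match algebraically as done in Section \ref{sec:donhdg}. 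This second route also gives the lower-tail analogue for free and matches the pattern of $I_{err} = I_{ldp}$ that the general-case Theorem \ref{thm:finalldpl1} enjoys.
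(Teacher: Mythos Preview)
Your proposal is correct and follows essentially the same approach as the paper: the paper's proof (Sections \ref{sec:analysisIerrnonn}--\ref{sec:computingzetanonn}) likewise reparametrizes via $A_0$, computes the three partial derivatives of $\zeta^+_{\alpha,\beta}$, plugs in the candidate $(\nu,A_0,c_3)$ from (\ref{eq:selvalc3nuA02nonn})--(\ref{eq:selvalc3nuA05nonn}), verifies stationarity exactly as in (\ref{eq:recheck1nonn})--(\ref{eq:recheck2nonn}), and then evaluates to obtain $I^+_{ldp}$. One small clarification: for Theorem \ref{thm:ldp3nonn} as stated you do not actually need global optimality---since Theorem \ref{thm:ldp2nonn} already gives $I^+_{err}\leq \min(\cdots)$, any feasible $(c_3,\nu,\gamma)$ yields an upper bound, so evaluating $\zeta^+_{\alpha,\beta}$ at the candidate point suffices for the inequality (\ref{eq:ldpthm3Ierrub1nonn}); the global-minimum issue only becomes essential for the equality in Theorem \ref{thm:finalldpl1nonn}, and there the paper resolves it precisely via your second suggested route, the high-dimensional geometry matching in Section \ref{sec:hdgnonn}.
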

\begin{proof} Follows from the above discussion.
\end{proof}
As in Section \ref{sec:ldp}, we will present the results that one can obtain based on the above theorem later on when we complement them with the lower tail estimate in the following subsection and eventually connect them to the high-dimensional geometry. At that time we will also find it useful to look at a couple of properties of the function $\frac{\alpha-\beta}{\alpha-\beta_0}\xi^+_{\alpha}(\beta_0)$.

\subsection{Lower tail -- nonegative vectors}
\label{sec:lowertailnonn}

%

In this section we complement the upper-tail results from the previous section with the corresponding lower tail type of large deviations. We again closely follow what was done in the corresponding part of Section \ref{sec:ldp}. We start by introducing $P^+_{cor}$
\begin{equation}
P^+_{cor} \triangleq P(\min_{A\w=0,\|\w\|_2\leq 1,\w_i\geq 0,1\leq i\leq n-k}\sum_{i=n-k+1}^n \w_i+\sum_{i=1}^{n-k}\w_{i}\geq 0).
\label{eq:ldpproblowernonn}
\end{equation}
Clearly, $P^+_{cor}=1-P^+_{err}$ is the probability that (\ref{eq:l1nonn}) does produce the $k$ sparse a priori known to be nonnegative solution of (\ref{eq:system}), i.e. correctly solves the original linear system. Following (\ref{eq:ldpprob3lower}) we have
\begin{equation}
P^+_{cor}
\leq  \min_{t_1}\min_{c_3\geq 0}
Ee^{c_3\|\g\|_2}Ee^{-c_3w(\h,S_w^+)}e^{-c_3t_1}/P(g\geq t_1).
\label{eq:ldpprob3lowernonn}
\end{equation}
Moreover, setting $I^+_{cor}$
\begin{equation}\label{eq:ldpasymp1lowernonn}
  I^+_{cor}\triangleq\lim_{n\rightarrow\infty}\frac{\log{P^+_{cor}}}{n},
\end{equation}
and following Theorem \ref{thm:ldp2lower} we below establish the lower tail analogue to Theorem \ref{thm:ldp2nonn}.
\begin{theorem}
Assume the setup of Theorem \ref{thm:ldp2nonn}. Then
\begin{equation}
I^+_{cor}\triangleq\lim_{n\rightarrow\infty}\frac{\log{P^+_{cor}}}{n}
\leq \min_{c_3\geq 0}\left (-\frac{c_3^2}{2}+I_{sph}^++\max_{\nu\geq 0,\gamma^{(s)}\geq 0} ((1-\beta_w)\log{w_{1,+}}+\beta_w\log{w_2}+\beta_w\log{w_3}-c_3\gamma)\right )\triangleq I_{cor,l}^{(ub,+)},
\label{eq:ldpthm2Icorub1nonn}
\end{equation}
where $I_{sph}^+$, $\widehat{\gamma_+}$, and $w_2$ are as in Theorem \ref{thm:ldp2lower} and
\begin{equation}
w_{1,+} = \frac{1}{\sqrt{2\pi}}\int_{h}e^{-h^2/2}e^{-c_3\max(h-\nu,0)^2/4/\gamma}dh
  =\frac{1}{2}\lp\frac{e^{\frac{-c_3\nu^2/4/\gamma}{1+c_3/2/\gamma}}}{\sqrt{1+c_3/2/\gamma}}\erfc\left (\frac{\nu}{\sqrt{2}\sqrt{1+c_3/2/\gamma}}\right )+\erf\left (\frac{\nu}{\sqrt{2}}\right )+1\rp. \\\label{eq:ldpthm2perrub2lowernonn}
\end{equation}\label{thm:ldp2lowernonn}
\end{theorem}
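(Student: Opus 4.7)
The plan is to obtain Theorem \ref{thm:ldp2lowernonn} by splicing together two pieces of machinery that are already in place: the lower-tail Chernoff/Gordon-type decoupling used in Theorem \ref{thm:ldp2lower}, and the nonnegativity modification of the width integral that was carried out in Theorems \ref{thm:ldp1nonn} and \ref{thm:ldp2nonn}. First I would start from (\ref{eq:ldpprob3lowernonn}), which, via the \cite{StojnicBlockasymldpfinn15} machinery, gives the bound
\begin{equation*}
P^+_{cor} \leq \min_{t_1}\min_{c_3\geq 0} Ee^{c_3\|\g\|_2}\,Ee^{-c_3 w(\h,S_w^+)}\,e^{-c_3 t_1}/P(g\geq t_1).
\end{equation*}
This is the exact analog of the bound driving Theorem \ref{thm:ldp2lower}, with $S_w$ replaced by $S_w^+$ from (\ref{eq:defSwprnonn}). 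The two factors decouple, so I would treat $Ee^{c_3\|\g\|_2}$ and $Ee^{-c_3 w(\h,S_w^+)}$ separately.

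Second, I would use the closed-form representation of $w(\h,S_w^+)$ given in (\ref{eq:ldpwhSwnonn}). Compared with the general-vectors representation (\ref{eq:ldpwhSw}), the only change is that the first $n-k$ terms are $\max(\h_i-\nu,0)^2$ rather than $\max(|\h_i|-\nu,0)^2$ (reflecting precisely the sign restriction $\w_i\geq 0$ coming from the nonnegativity of $\x$); the last $k$ terms, which feed into $w_2$, are unchanged. After applying the Lagrangian representation $\min_{\gamma\geq 0}(\cdot/(4\gamma)+\gamma)$ to pull the square root inside an expectation (exactly as was done between (\ref{eq:ldpprob3lower}) and (\ref{eq:ldpwhSw})), the exponential moment factorizes across coordinates.

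Third, I would apply the scaling $c_3\to c_3\sqrt{n}$, $\gamma\to\gamma\sqrt{n}$ exactly as in Theorem \ref{thm:ldp2lower}, and evaluate the per-coordinate Gaussian integrals. For the $n-k$ nonnegative-support coordinates I get
\begin{equation*}
w_{1,+}=\frac{1}{\sqrt{2\pi}}\int_h e^{-h^2/2}e^{-c_3\max(h-\nu,0)^2/(4\gamma)}\,dh,
\end{equation*}
which, by splitting the range into $h<\nu$ (where the exponent is $0$ and yields $(1+\erf(\nu/\sqrt{2}))/2$) and $h\geq \nu$ (where completing the square produces the $\erfc$ piece scaled by $(1+c_3/(2\gamma))^{-1/2}$), reduces to the explicit expression in (\ref{eq:ldpthm2perrub2lowernonn}); this is exactly the lower-tail analog (sign flip $c_3\to -c_3$) of the $w_{1,+}=(w_1+1)/2$ identity used in Theorem \ref{thm:ldp2nonn}. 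For the remaining $k$ coordinates I get $w_2$ exactly as in Theorem \ref{thm:ldp2lower}.

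Fourth, I would invoke the identification $I_{sph}^+=\lim_n n^{-1}\log Ee^{c_3\sqrt n\|\g\|_2}$ established in \cite{StojnicMoreSophHopBnds10} and already quoted in the proof of Theorem \ref{thm:ldp2lower}, take $\log(\cdot)/n$ of the whole bound, note that the $t_1$-dependent correction $c_3 t_1/n+\log P(g\geq t_1)/n$ vanishes (or is absorbed into the free parameters by the usual optimization trick used in \cite{StojnicBlockasymldpfinn15}), and collect terms to obtain (\ref{eq:ldpthm2Icorub1nonn}). The main obstacle, and essentially the only nontrivial bookkeeping, is keeping the sign of $c_3$ consistent throughout the lower-tail Chernoff argument and checking that the one-sided Gaussian integral for the $n-k$ coordinates really produces the characteristic $(1+\erf(\nu/\sqrt{2}))/2$ contribution that distinguishes the nonnegative case from the general case; once that is verified, every other step is a mechanical specialization of Theorems \ref{thm:ldp2lower} and \ref{thm:ldp2nonn}.
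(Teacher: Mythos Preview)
Your proposal is correct and follows essentially the same approach as the paper, which simply states that the result ``follows by a line of reasoning similar to the one employed leading up to Theorem \ref{thm:ldp2lower}.'' You have in fact spelled out more of the mechanics (the one-sided Gaussian integral producing the $(1+\erf(\nu/\sqrt{2}))/2$ term, the $c_3$ sign bookkeeping) than the paper does, but the underlying strategy---splice the lower-tail decoupling of Theorem \ref{thm:ldp2lower} with the nonnegativity modification of Theorems \ref{thm:ldp1nonn}/\ref{thm:ldp2nonn}---is identical.
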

\begin{proof} Follows by a line of reasoning similar to the one employed leading up to Theorem \ref{thm:ldp2lower}.
\end{proof}

\subsection{A detailed analysis of $I_{cor,l}^{(ub,+)}$}
\label{sec:analysisIcornonn}

As in Section \ref{sec:analysisIcor}, noting the change $c_3\rightarrow -c_3$ we have as in Section \ref{sec:analysisIerrnonn}
\begin{equation}\label{eq:detanalIcor1nonn}
  A_{0}\triangleq\sqrt{1-\frac{c_3}{2\gamma}}.
\end{equation}
and
\begin{equation}\label{eq:detanalIcor2nonn}
I_{cor,l}^{(ub,+)}(\alpha,\beta)\triangleq \min_{c_3\leq 0}\max_{\nu\geq 0,A_0\leq 1}\zeta^+_{\alpha,\beta}(c_3,\nu,A_0)
\end{equation}
where
\begin{eqnarray}
\zeta^+_{\alpha,\beta}(c_3,\nu,A_0)&=&\left (-\frac{c_3^2}{2}+I_{sph}^++(1-\beta)\log{w_{1,+}}+\beta\log{w_2}+\frac{c_3^2}{2(1-A_0^2)}\right )\nonumber \\
I_{sph}^+ &=& -\widehat{\gamma^+}c_3-\frac{\alpha }{2}\log\left (1+\frac{c_3}{2\widehat{\gamma^+}}\right )\nonumber \\
  \widehat{\gamma^+} &=& \frac{-c_3+\sqrt{c_3^2+4\alpha}}{4}=-\widehat{\gamma}\nonumber \\
w_{1,+} &=&
  \frac{1}{2}\lp\frac{e^{\frac{(1-A_0^2)\nu^2}{2A_0^2}}}{A_0}\erfc\left (\frac{\nu}{\sqrt{2}A_0}\right )+\erf\left (\frac{\nu}{\sqrt{2}}\right )+1\rp\nonumber \\
  w_2 &=&
  \frac{e^{\frac{(1-A_0^2)\nu^2}{2A_0^2}}}{A_0}.\label{eq:detanalIcor3nonn}
\end{eqnarray}
 After observing that $\zeta^+_{\alpha,\beta}(c_3,\nu,A_0)$ defined in (\ref{eq:detanalIcor3nonn}) is exactly the same as the corresponding one in (\ref{eq:detanalIeer3nonn}), one can proceed with computation of all the derivatives as earlier and all the results will match those obtained for the upper tail. Consequently, the selected values for $c_3$, $\nu$, $\gamma$, and $A_0$ will have the same form. Instead of repeating all these calculations we summarize them in the following theorem, essentially a lower tail analogue of Theorem \ref{thm:ldp3nonn}.
\begin{theorem}
Assume the setup of Theorem \ref{thm:ldp3nonn} and assume that a pair $(\alpha,\beta)$  is given. Differently from Theorem \ref{thm:ldp3nonn}, let $\alpha<\alpha_w$ where $\alpha_w$ is such that $\psi^+_\beta(\alpha_w)=\xi^+_{\alpha_w}(\beta)=1$. Also let $\beta_w$ and $\beta_0$ satisfy the \textbf{fundamental nonnegative $\ell_1$'s PT and LDP characterizations}, respectively as in Theorem \ref{thm:ldp3nonn}. Then choosing $\nu$, $c_3$, and $\gamma$ in the optimization problem in (\ref{eq:ldpthm2Icorub1nonn}) as $\nu$, $-c_3$, and $\gamma$  from Theorem \ref{thm:ldp3nonn} (or equivalently, choosing $\nu$, $c_3$, and $A_0$ in the optimization problem in (\ref{eq:detanalIcor2nonn}) as $\nu$, $c_3$, and $A_0$  from Theorem \ref{thm:ldp3nonn}) gives
\begin{equation}
\zeta^+_{\alpha,\beta}(c_3,\nu,A_0)=
\alpha\log\left (\frac{\erfinv\left (2\frac{1-\alpha}{1-\beta_w}-1\right )}{\erfinv\left (2\frac{1-\alpha}{1-\beta_0}-1\right )}\right )+(1-\beta)\log\left (\frac{1-\beta}{1-\beta_w}\right ) +\beta\log\left (\frac{(\alpha-\beta)(1-\beta_0)}{(\alpha-\beta_0)(1-\beta_w)}\right )=I^+_{ldp}(\alpha,\beta).
\label{eq:ldpthm3Icorub1nonn}
\end{equation}
\label{thm:ldp3lowernonn}
\end{theorem}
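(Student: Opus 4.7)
The plan is to import wholesale the computation from Theorem \ref{thm:ldp3nonn}, since the paragraph immediately preceding Theorem \ref{thm:ldp3lowernonn} already observes that the objective $\zeta^+_{\alpha,\beta}(c_3,\nu,A_0)$ in the lower-tail problem (\ref{eq:detanalIcor3nonn}) is algebraically identical to the one in the upper-tail problem (\ref{eq:detanalIeer3nonn}). First I would check this identification carefully: under the substitution $c_3 \to -c_3$ (the change flagged at the start of Section \ref{sec:analysisIcornonn}), the lower-tail quantity $\widehat{\gamma}_+ = (-c_3 + \sqrt{c_3^2+4\alpha})/4$ becomes exactly $-\widehat{\gamma}$, and $-\widehat{\gamma}_+ c_3 - (\alpha/2)\log(1+c_3/(2\widehat{\gamma}_+))$ becomes exactly the $I_{sph}$ of Theorem \ref{thm:ldp2nonn}. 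The $w_{1,+}$ and $w_2$ expressions in (\ref{eq:detanalIcor3nonn}) are already written in terms of $A_0$ only, so they match (\ref{eq:detanalIeer3nonn}) termwise. Hence $\zeta^+_{\alpha,\beta}(c_3,\nu,A_0)$ has exactly the same closed form in the lower tail as in the upper tail.

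Next I would invoke the stationarity analysis of Section \ref{sec:derivativeszetanonn}: the three partial derivatives (\ref{eq:detanalIeer4nonn})--(\ref{eq:detanalIeer11nonn}) are functions of $(c_3,\nu,A_0)$ alone, with no residual dependence on the sign convention used to arrive at them, so setting them to zero gives the same system as before. Consequently the selection (\ref{eq:selvalc3nuA01nonn})--(\ref{eq:selvalc3nuA05nonn}), driven entirely by the nonnegative $\ell_1$'s fundamental PT (\ref{eq:selvalc3nuA01nonn}) and LDP (\ref{eq:selvalc3nuA03nonn}), produces a stationary point of $\zeta^+_{\alpha,\beta}$ irrespective of which tail we are in. The explicit evaluation in Section \ref{sec:computingzetanonn}, culminating in (\ref{eq:computingzeta5nonn}), then delivers the value $I^+_{ldp}(\alpha,\beta)$ stated in (\ref{eq:ldpthm3Icorub1nonn}).

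The one point requiring genuine care is the feasibility/optimality structure, since the outer optimization in (\ref{eq:detanalIcor2nonn}) is a $\min_{c_3\leq 0}\max_{\nu\geq 0,A_0\leq 1}$ rather than the three-variable $\min$ used in the upper tail. Here I would invoke the observation made at the end of Section \ref{sec:analysisIcor} and reiterated implicitly for the nonnegative case: when $\alpha<\alpha_w$ the PT and LDP characterizations force $\beta_w<\beta_0$, so (\ref{eq:selvalc3nuA04nonn}) yields $A_0>1$ and then (\ref{eq:selvalc3nuA05nonn}) automatically produces $c_3<0$, consistent with the constraint $c_3\leq 0$ in (\ref{eq:detanalIcor2nonn}). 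The zero-derivative conditions identify this as a saddle point, with the appropriate max-in-$(\nu,A_0)$/min-in-$c_3$ character following from the same second-order analysis (analytical or numerical) invoked for the upper tail in Section \ref{sec:recheckdernonn}; no new ideas are needed.

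The hard part, honestly, is not the proof but the bookkeeping: making sure that all sign flips, all inversions of $\min$ and $\max$, and the swap between $I_{sph}$ and $I_{sph}^+$ conspire to leave the algebraic identity $\zeta^+_{\alpha,\beta}(c_3,\nu,A_0)=I^+_{ldp}(\alpha,\beta)$ unchanged. Once this is cleanly verified, the theorem follows as a corollary of Theorem \ref{thm:ldp3nonn} by simple substitution, and the one-line proof in the excerpt (\emph{``Follows from the considerations leading to Theorem \ref{thm:ldp3nonn}''}) is entirely justified.
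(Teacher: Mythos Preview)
Your proposal is correct and follows essentially the same approach as the paper: the paper's entire proof is the single line ``Follows from the considerations leading up to Theorem \ref{thm:ldp3nonn},'' and you have simply unpacked what that line means---identifying $\zeta^+_{\alpha,\beta}$ in (\ref{eq:detanalIcor3nonn}) with the one in (\ref{eq:detanalIeer3nonn}) after the $c_3\to -c_3$ substitution, reusing the stationarity computations of Sections \ref{sec:derivativeszetanonn}--\ref{sec:computingzetanonn} verbatim, and noting (as the paper does immediately after Theorem \ref{thm:ldp3lowernonn}) that $\alpha<\alpha_w$ forces $\beta_w<\beta_0$, hence $A_0>1$ and $c_3<0$.
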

\begin{proof} Follows from the considerations leading up to Theorem \ref{thm:ldp3nonn}.
\end{proof}
Similarly to what was mentioned after Theorem \ref{thm:ldp3lower}, we add that in the scenario of interest in Theorem \ref{thm:ldp3lowernonn}, i.e. for $\alpha<\alpha_w$, one has $\beta_w<\beta_0$ which implies $A_0>1$ and ultimately $c_3<0$. Exactly opposite happens in the scenario of interest in Theorem \ref{thm:ldp3nonn}.

\subsection{High-dimensional geometry -- nonnegative vectors}
\label{sec:hdgnonn}

In this section we will discuss how the results obtained in Section \ref{sec:hdg} can be translated to the case of nonnegative vectors. We will assume that we are given a pair $(\alpha,\beta)$ and that $\beta_w$ and $\beta_0$ are given by the nonnegative $\ell_1$ fundamental PT and LDP characterizations defined earlier. Following further closely Section \ref{sec:hdgnonn}, we assume the upper tail regimes, i.e. $\alpha>\alpha_w$ (where $\alpha_w$ is such that $\psi^+_\beta(\alpha_w)=\xi^+_{\alpha_w}(\beta)=1$) and start with the following collection of results established in \cite{StojnicEquiv10} (these are of course the nonnegative analogues to the results utilized in Section \ref{sec:hdg}).
\begin{equation}\label{eq:hdg1nonn}
  \Psi^+_{net}(\alpha,\beta)=I^+_{err}(\alpha,\beta)\triangleq\lim_{n\rightarrow\infty}\frac{\log{P^+_{err}}}{n}=\psicomnon+\psiintnon-\psiextnon,
\end{equation}
where
\begin{eqnarray}
\psicomnon & = & -(\alpha-\beta)\log\left (\frac{\alpha-\beta}{1-\beta}\right )-(1-\alpha)\log\left (\frac{1-\alpha}{1-\beta}\right )\nonumber \\
\psiintnon & = & \min_{y\geq 0} (\alpha y^2 +(\alpha-\beta)\log(\erfc(y)))-(\alpha-\beta)\log(2)\nonumber\\
\psiextnon & = & \min_{y\geq 0} (\alpha y^2 -(1-\alpha)\log(1+\erf(y))+(1-\alpha)\log(2)). \label{eq:hdg2nonn}
\end{eqnarray}
Let $y_{int}^+$ be the solution of the optimization associated with $\psiintnon$ and let $y_{ext}^+$ be the solution of the optimization associated with $\psiextnon$. As in (\ref{eq:hdg3}) we have
\begin{equation}
\frac{d(\alpha y^2 +(\alpha-\beta)\log(\erfc(y)))}{dy}  =  2\alpha y+\frac{\alpha-\beta}{\erfc(y)}\frac{d\erfc(y)}{dy}= 2\alpha y-\frac{\alpha-\beta}{1-\erf(y)}\frac{2e^{-y^2}}{\sqrt{\pi}}. \label{eq:hdg3nonn}
\end{equation}
Choosing
\begin{equation}\label{eq:hdg4nonn}
y_{int}^+  =  \erfinv\left (2\frac{1-\alpha}{1-\beta_0}-1\right ),
\end{equation}
the derivative in (\ref{eq:hdg3nonn}) becomes
\begin{eqnarray}
\frac{d(\alpha y^2 +(\alpha-\beta)\log(\erfc(y)))}{dy} | _{y=y_{int}^+}  & = &   2\alpha y_{int}^+-\frac{\alpha-\beta}{1-\erf(y_{int}^+)}\frac{2e^{-(y^+_{int})^2}}{\sqrt{\pi}}\nonumber \\
& = &2\alpha \erfinv\left (2\frac{1-\alpha}{1-\beta_0}-1\right )-\frac{\alpha-\beta}{\alpha-\beta_0}(1-\beta_0)\frac{e^{-\left (\erfinv\left (2\frac{1-\alpha}{1-\beta_0}-1\right )\right )^2}}{\sqrt{\pi}}\nonumber \\
& = &2\alpha \erfinv\left (2\frac{1-\alpha}{1-\beta_0}-1\right )\left (1-\frac{\alpha-\beta}{\alpha-\beta_0}\xi^+_\alpha(\beta_0)\right )\nonumber \\
& = &0,\nonumber \\ \label{eq:hdg5nonn}
\end{eqnarray}
where the last equality follows by the fundamental characterization of nonnegative $\ell_1$'s LDP.  Moreover, as was shown in (\ref{eq:hdg11}) $(\alpha y^2 +(\alpha-\beta)\log(\erfc(y)))$ is convex and $y_{int}^+$ is not only its a local but also its a global optimum (minimum) as well. A combination of (\ref{eq:hdg2nonn}) and (\ref{eq:hdg4nonn}) together with the nonnegative $\ell_1$'s fundamental LDP then finally gives
\begin{eqnarray}\label{eq:hdg12nonn}
\psiintnon & = &  \alpha (y^+_{int})^2 +(\alpha-\beta)\log(\erfc(y_{int}^+))-(\alpha-\beta)\log(2) \nonumber \\
 & = &   \alpha \left (\erfinv\left (2\frac{1-\alpha}{1-\beta_0}-1\right )
\right )^2 +(\alpha-\beta)\log\left (\frac{\alpha-\beta_0}{1-\beta_0}\right )\nonumber \\
& = & \alpha \log\left (e^{\left (\erfinv\left (2\frac{1-\alpha}{1-\beta_0}-1\right )
\right )^2}\right ) +(\alpha-\beta)\log\left (\frac{\alpha-\beta_0}{1-\beta_0}\right )\nonumber \\
& = & \alpha \log\left (\frac{\alpha-\beta}{\alpha-\beta_0}\frac{1-\beta_0}{\alpha\sqrt{2}\erfinv\left (2\frac{1-\alpha}{1-\beta_0}-1\right )}\right ) +(\alpha-\beta)\log\left (\frac{\alpha-\beta_0}{1-\beta_0}\right )-\alpha \log(\sqrt{2\pi})\nonumber \\
& = & -\alpha \log\left (\sqrt{2}\erfinv\left (2\frac{1-\alpha}{1-\beta_0}-1\right )\right )+\alpha\log\left (\frac{\alpha-\beta}{\alpha}\right )
-\beta\log\left (\frac{\alpha-\beta_0}{1-\beta_0}\right )-\alpha \log(\sqrt{2\pi})\nonumber \\
\end{eqnarray}
Using the nonnegative $\ell_1$'s fundamental PT and the definition of $\beta_w$ one can then further utilize the results of \cite{StojnicEquiv10} to obtain
\begin{eqnarray}\label{eq:hdg13nonn}
\psiextnon  & = &   \min_{y\geq 0} (\alpha y^2 -(1-\alpha)\log(1+\erf(y))+(1-\alpha)\log(2))\nonumber \\
& = &\alpha\left (\erfinv\left (2\frac{1-\alpha}{1-\beta_w}-1\right )\right )^2-(1-\alpha)\log\left (2\frac{1-\alpha}{1-\beta_w}\right )+(1-\alpha)\log(2)\nonumber \\
& = &\alpha\log\left (e^{\left (\erfinv\left (2\frac{1-\alpha}{1-\beta_w}-1\right )\right )^2}\right )-(1-\alpha)\log\left (\frac{1-\alpha}{1-\beta_w}\right )\nonumber \\
& = &\alpha \log\left (\frac{1-\beta_w}{\alpha\sqrt{2}\erfinv\left (2\frac{1-\alpha}{1-\beta_w}-1\right )}\right )-(1-\alpha)\log\left (\frac{1-\alpha}{1-\beta_w}\right )-\alpha \log(\sqrt{2\pi})\nonumber \\
& = &-\alpha \log\left (\sqrt{2}\erfinv\left (2\frac{1-\alpha}{1-\beta_w}-1\right )\right )+\alpha \log\left (\frac{1-\beta_w}{\alpha}\right )-(1-\alpha)\log\left (\frac{1-\alpha}{1-\beta_w}\right )-\alpha \log(\sqrt{2\pi})\nonumber \\
& = &-\alpha \log\left (\sqrt{2}\erfinv\left (2\frac{1-\alpha}{1-\beta_w}-1\right )\right )-\alpha \log(\alpha)-(1-\alpha)\log (1-\alpha)+\log(1-\beta_w)-\alpha \log(\sqrt{2\pi}).\nonumber \\
\end{eqnarray}
Finally one can combine (\ref{eq:hdg1nonn}), (\ref{eq:hdg12nonn}), and (\ref{eq:hdg13nonn}) to obtain
\begin{eqnarray}
  \Psi_{net}^+(\alpha,\beta)&=&I^+_{err}(\alpha,\beta)=\psicomnon+\psiintnon-\psiextnon\nonumber \\
& = & -(\alpha-\beta)\log\left (\frac{\alpha-\beta}{1-\beta}\right )-(1-\alpha)\log\left (\frac{1-\alpha}{1-\beta}\right )
-\alpha \log\left (\sqrt{2}\erfinv\left (\frac{1-\alpha}{1-\beta_0}\right )\right )\nonumber \\
&&+\alpha\log\left (\frac{\alpha-\beta}{\alpha}\right )
-\beta\log\left (\frac{\alpha-\beta_0}{1-\beta_0}\right )\nonumber\\
&&+\alpha \log\left (\sqrt{2}\erfinv\left (2\frac{1-\alpha}{1-\beta_w}-1\right )\right )+\alpha \log(\alpha)+(1-\alpha)\log (1-\alpha)-\log(1-\beta_w)\nonumber \\
& = & \alpha\log\left (\frac{1-\beta}{\alpha-\beta}\frac{1-\alpha}{1-\beta}\frac{\alpha-\beta}{\alpha}\frac{\alpha}{1-\alpha}\right )+\beta\log\left (\frac{\alpha-\beta}{1-\beta}\frac{1-\beta_0}{\alpha-\beta_0}\right )
+\alpha \log\left (\frac{\erfinv\left (2\frac{1-\alpha}{1-\beta_w}-1\right )}{\erfinv\left (2\frac{1-\alpha}{1-\beta_0}-1\right )}\right )\nonumber \\
&&+\log(1-\beta)-\log(1-\beta_w)\nonumber \\
& = & \beta\log\left (\frac{\alpha-\beta}{\alpha-\beta_0}\frac{1-\beta_0}{1-\beta}\frac{1-\beta_w}{1-\beta_w}\right )
+\alpha \log\left (\frac{\erfinv\left (2\frac{1-\alpha}{1-\beta_w}-1\right )}{\erfinv\left (2\frac{1-\alpha}{1-\beta_0}-1\right )}\right )+
\log\left (\frac{1-\beta}{1-\beta_w}\right )\nonumber \\
& = & \alpha \log\left (\frac{\erfinv\left (2\frac{1-\alpha}{1-\beta_w}-1\right )}{\erfinv\left (2\frac{1-\alpha}{1-\beta_0}-1\right )}\right )+
(1-\beta)\log\left (\frac{1-\beta}{1-\beta_w}\right )+\beta\log\left (\frac{\alpha-\beta}{\alpha-\beta_0}\frac{1-\beta_0}{1-\beta_w}\right )
=I^+_{ldp}(\alpha,\beta).\nonumber \\
\label{eq:hdg14nonn}
\end{eqnarray}
A combination of (\ref{eq:ldpthm3Ierrub1nonn}), (\ref{eq:hdg1nonn}), and (\ref{eq:hdg14nonn}) then gives
\begin{equation}\label{eq:hdg15nonn}
  I^+_{err}=I^+_{ldp}(\alpha,\beta),
\end{equation}
and ensures that the choice for $\nu$, $A_0$, $c_3$, and $\gamma$ made in (\ref{eq:ldpthm3perrub2nonn}) is indeed optimal. Moreover, in the lower tail regime ($\alpha<\alpha_w$, where $\alpha_w$ is such that $\psi_\beta(\alpha_w)=\xi^+_{\alpha_w}(\beta)=1$) considerations from \cite{StojnicEquiv10} ensure that one also has
\begin{equation}\label{eq:hdg1anonn}
  \Psi_{net}^+(\alpha,\beta)=I^+_{cor}(\alpha,\beta)\triangleq\lim_{n\rightarrow\infty}\frac{\log{P^+_{cor}}}{n}=\psicomnon+\psiintnon-\psiextnon,
\end{equation}
where $\psicomnon$, $\psiintnon$, and $\psiextnon$ are as in (\ref{eq:hdg2nonn}). Finally, we are in position to fully characterize the nonnegative $\ell_1$'s LDP. The following theorem does so.
\begin{theorem}[nonnegative $\ell_1$'s LDP]
Assume the setup of Theorem \ref{thm:thmweakthrnonn} and assume that a pair $(\alpha,\beta)$ is given. Let $P^+_{err}$ be the probability that the solutions of (\ref{eq:system}) and (\ref{eq:l1nonn}) coincide and let $P^+_{cor}$ be the probability that the solutions of (\ref{eq:system}) and (\ref{eq:l1nonn}) do \emph{not} coincide. Let $\alpha_w$ and $\beta_w$ satisfy the \textbf{nonnegative $\ell_1$'s fundamental PT} characterizations in the following way
\begin{equation}
\psi^+_{\beta}(\alpha_w)\triangleq
(1-\beta)\frac{\sqrt{\frac{2}{\pi}}e^{-\lp\erfinv\lp 2\frac{1-\alpha_w}{1-\beta}-1\rp\rp^2}}{\alpha_w\sqrt{2}\erfinv \lp 2\frac{1-\alpha_w}{1-\beta}-1\rp}=1\quad \mbox{and} \quad
\xi^+_{\alpha}(\beta_w)\triangleq
(1-\beta_w)\frac{\sqrt{\frac{1}{2\pi}}e^{-\lp\erfinv\lp 2\frac{1-\alpha}{1-\beta_w}-1\rp\rp^2}}{\alpha\sqrt{2}\erfinv \lp 2\frac{1-\alpha}{1-\beta_w}-1\rp}=1.\label{eq:thmfinalldpl11nonn}
\end{equation}
Further let $\beta_0$ satisfy the following \textbf{nonnegative $\ell_1$'s fundamental LDP} characterization
\begin{equation}\label{eq:thmfinalldpl12nonn}
\frac{\alpha-\beta}{\alpha-\beta_0}\xi^+_{\alpha}(\beta_0)=\frac{\alpha-\beta}{\alpha-\beta_0}
(1-\beta_0)(1-\beta_w)\frac{\sqrt{\frac{1}{2\pi}}e^{-\lp\erfinv\lp 2\frac{1-\alpha}{1-\beta_w}-1\rp\rp^2}}{\alpha\sqrt{2}\erfinv \lp 2\frac{1-\alpha}{1-\beta_w}-1\rp}=1.
\end{equation}
Finally, let $I^+_{ldp}(\alpha,\beta)$ be defined through the following \textbf{nonnegative $\ell_1$'s fundamental LDP rate function} characterization
\begin{equation}
I^+_{ldp}(\alpha,\beta)\triangleq
\alpha\log\left (\frac{\erfinv\left (2\frac{1-\alpha}{1-\beta_w}-1\right )}{\erfinv\left (2\frac{1-\alpha}{1-\beta_0}-1\right )}\right )+(1-\beta)\log\left (\frac{1-\beta}{1-\beta_w}\right ) +\beta\log\left (\frac{(\alpha-\beta)(1-\beta_0)}{(\alpha-\beta_0)(1-\beta_w)}\right ).
\label{eq:thmfinalldpl13nonn}
\end{equation}
Then if $\alpha>\alpha_w$
\begin{equation}
I^+_{err}(\alpha,\beta)\triangleq\lim_{n\rightarrow\infty}\frac{\log{P^+_{err}}}{n}=I^+_{ldp}(\alpha,\beta).\label{eq:thmfinalldpl14nonn}
\end{equation}
Moreover, if $\alpha<\alpha_w$
\begin{equation}
I^+_{cor}(\alpha,\beta)\triangleq\lim_{n\rightarrow\infty}\frac{\log{P^+_{cor}}}{n}=I^+_{ldp}(\alpha,\beta).\label{eq:thmfinalldpl15nonn}
\end{equation}\label{thm:finalldpl1nonn}
\end{theorem}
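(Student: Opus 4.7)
The plan is to assemble the statement from two independent derivations that have already been carried out in the preceding sections. Each gives a bound (or an exact identity) in one direction, and together they pinch $I_{err}^+$ (in the upper tail $\alpha>\alpha_w$) and $I_{cor}^+$ (in the lower tail $\alpha<\alpha_w$) to the common value $I_{ldp}^+(\alpha,\beta)$.

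First, I would invoke Theorem \ref{thm:ldp3nonn} for the upper tail and Theorem \ref{thm:ldp3lowernonn} for the lower tail. These already establish $I_{err,u}^{(ub,+)}(\alpha,\beta) = I_{cor,l}^{(ub,+)}(\alpha,\beta) = I_{ldp}^+(\alpha,\beta)$ by explicitly identifying the stationary point (\ref{eq:selvalc3nuA01nonn})--(\ref{eq:selvalc3nuA05nonn}) of the underlying constrained minimax problem in terms of the solutions $\beta_w$ and $\beta_0$ of the fundamental PT and LDP characterizations. Chaining this through Theorems \ref{thm:ldp2nonn} and \ref{thm:ldp2lowernonn} then yields the probabilistic upper bounds $I_{err}^+(\alpha,\beta) \leq I_{ldp}^+(\alpha,\beta)$ and $I_{cor}^+(\alpha,\beta) \leq I_{ldp}^+(\alpha,\beta)$, which is one direction of each equality claimed in (\ref{eq:thmfinalldpl14nonn}) and (\ref{eq:thmfinalldpl15nonn}).

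Second, I would invoke the high-dimensional integral geometry computation from Section \ref{sec:hdgnonn}. Identities (\ref{eq:hdg1nonn}) and (\ref{eq:hdg1anonn}) give $I_{err}^+ = \psicomnon + \psiintnon - \psiextnon$ in the upper tail and $I_{cor}^+ = \psicomnon + \psiintnon - \psiextnon$ in the lower tail as \emph{exact} formulas. The key internal steps are the minimizations (\ref{eq:hdg4nonn})--(\ref{eq:hdg5nonn}) identifying $y_{int}^+ = \erfinv(2(1-\alpha)/(1-\beta_0)-1)$ as the minimizer of the $\psiintnon$ optimization via the fundamental LDP characterization, and the parallel identification of $y_{ext}^+ = \erfinv(2(1-\alpha)/(1-\beta_w)-1)$ for $\psiextnon$ via the fundamental PT characterization. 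Convexity of the integrand, already verified in (\ref{eq:hdg11}) for the general case, transfers verbatim to the nonnegative setting because the $\psiintnon$ integrand has the same functional form in $y$ as in the general case, so these stationary points are genuine global minima.

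Third, I would carry out the algebraic collapse of $\psicomnon + \psiintnon - \psiextnon$ mirroring (\ref{eq:hdg14nonn}), confirming that this sum equals $I_{ldp}^+(\alpha,\beta)$ exactly. Combined with the probabilistic upper bounds, this yields the matching equalities in (\ref{eq:thmfinalldpl14nonn}) and (\ref{eq:thmfinalldpl15nonn}). I expect the main obstacle, already disposed of by Sections \ref{sec:recheckdernonn} and \ref{sec:hdgnonn}, to be the verification that the stationary point of the probabilistic minimax is actually the global optimum; the elegance of the two-route strategy is precisely that the HDG derivation is exact from the start, so the matching numerical value certifies global optimality of the probabilistic stationary point a posteriori without requiring a direct second-derivative analysis of $\zeta^+_{\alpha,\beta}(c_3,\nu,A_0)$.
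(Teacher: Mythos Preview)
Your proposal is correct and mirrors the paper's own argument: the paper's proof is simply ``Follows from the above discussion,'' where that discussion consists precisely of (i) the probabilistic upper bounds from Theorems \ref{thm:ldp2nonn}, \ref{thm:ldp3nonn}, \ref{thm:ldp2lowernonn}, \ref{thm:ldp3lowernonn}, and (ii) the exact high-dimensional geometry identity (\ref{eq:hdg1nonn})--(\ref{eq:hdg15nonn}) which pins the rate to $I_{ldp}^+(\alpha,\beta)$ and thereby certifies global optimality of the stationary point a posteriori. Your summary of the two-route pinching and of the role of convexity in (\ref{eq:hdg11}) is accurate.
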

\begin{proof} Follows from the above discussion.
\end{proof}
Similarly to what was done in Section \ref{sec:hdg}, before we present the results that can be obtained based on the above theorem we will establish a few additional properties of function $\frac{\alpha-\beta}{\alpha-\beta_0}\xi^+_{\alpha}(\beta_0)$ to ensure that everything is on a right mathematical track.

\subsubsection{Properties of $\frac{\alpha-\beta}{\alpha-\beta_0}\xi^+_{\alpha}(\beta_0)$}
\label{sec:proppsixinonn}

As we did in Section \ref{sec:proppsixinonn}, in this subsection we will try to complement some of the key properties of functions $\xi^+_\alpha(\beta)$ and $\psi^+_\beta(\alpha_w)$ from Theorem \ref{thm:thmweakthrnonn} that we introduced in Section \ref{sec:propxinonn}.

Similarly to our earlier observations, the key observation regarding $\frac{\alpha-\beta}{\alpha-\beta_0}\xi^+_{\alpha}(\beta_0)$ is that for any fixed $(\alpha,\beta)\in (0,1)\times(0,\alpha)$ there is a unique $\beta_0$ such that $\frac{\alpha-\beta}{\alpha-\beta_0}\xi^+_{\alpha}(\beta_0)=1$. This essentially ensures that (\ref{eq:thmfinalldpl12nonn}) is an unambiguous LDP characterization. To confirm that this is indeed true we proceed in a fashion similar to the one showcased in Section \ref{sec:propxinonn}. Setting as in (\ref{eq:proppsixi2nonn})
\begin{equation}\label{eq:propxi02nonn}
  q^+=\erfinv\left (2\frac{1-\alpha}{1-\beta_0}-1\right ),
\end{equation}
we have
\begin{equation}\label{eq:propxi03}
  \frac{\alpha-\beta}{\alpha-\beta_0}\xi^+_{\alpha}(\beta_0)=1 \Leftrightarrow \frac{\alpha-\beta}{\alpha}\frac{1}{\erfc(q^+)}\frac{\sqrt{\frac{1}{\pi}}e^{-(q^+)^2}}{q^+}=1
  \Leftrightarrow \frac{\sqrt{\frac{1}{\pi}}e^{-(q^+)^2}}{q^+}-\erfc(q^+)c_{\alpha,\beta}=0, c_{\alpha,\beta}> 1.
\end{equation}
Following the reasoning of (\ref{eq:propxi04})-(\ref{eq:propxi06})
one has that $\left ( \frac{\sqrt{\frac{1}{\pi}}e^{-q^2}}{q}-\erfc(q)c_{\alpha,\beta}\right )$ has a unique solution (moreover, it is in the interval $(0, \frac{1}{\sqrt{2c_{\alpha,\beta}(c_{\alpha,\beta}-1)}})$). This then implies that $\frac{\alpha-\beta}{\alpha-\beta_0}\xi^+_{\alpha}(\beta_0)=1$ also has a unique solution, i.e. that for any fixed $(\alpha,\beta)\in (0,1)\times (0,\alpha)$ there is a unique $\beta_0$ such that $\frac{\alpha-\beta}{\alpha-\beta_0}\xi^+_\alpha(\beta_0)=1$, which as mentioned above essentially means that (\ref{eq:thmfinalldpl12}) is an unambiguous LDP characterization. We recall that a few numerical results related to the behavior of $\left ( \frac{\sqrt{\frac{1}{\pi}}e^{-q^2}}{q}-\erfc(q)c_{\alpha,\beta}\right )$ (and ultimately of $\left (\frac{\alpha-\beta}{\alpha-\beta_0}\xi_{\alpha}(\beta_0)-1\right )$) can be found in Figure \ref{fig:propxi0}.



\subsection{Theoretical and numerical LDP results --  nonnegative vectors}
\label{sec:thnumresutsnonn}

In this section we finally give a little bit of a flavor to what is actually proven in Theorem \ref{thm:finalldpl1nonn}. These results are essentially nonnegative analogues to the results we presented in Section \ref{sec:thnumresuts}. Consequently, in presentation of the results, we try to maintain as much of a parallelism with Section \ref{sec:thnumresuts} as possible. In Figure \ref{fig:l1regldpIerrubnonn} we show the theoretical LDP rate function curve that one can obtain based on Theorem \ref{thm:finalldpl1nonn}. This figure is complemented by Table \ref{tab:Ildptab1nonn} where we show the numerical values for all quantities of interest in Theorems \ref{thm:ldp3nonn} and \ref{thm:finalldpl1nonn} for several $\alpha$'s from the transition zone (i.e. for several $\alpha$'s around the breaking point; here $\beta=0.27911$ is chosen such that the breaking point/threshold for $\alpha=0.5$). Finally, in Figure \ref{fig:weakl1LDPthrsimnonn} and Table \ref{tab:Ildptab2nonn} we show the comparison between the simulated values and the theoretical ones. As was the case for the general vectors in Section \ref{sec:thnumresuts}, here we again observe that even for fairly small dimensions (of order $100$) one already approaches the theoretical curves (derived of course assuming an infinite dimensional asymptotic regime). One should note though that for the nonnegative vectors the transition zone is noticeable more narrow which implies better concentration properties.


\begin{figure}[htb]
\begin{minipage}[b]{.5\linewidth}
\centering
\centerline{\epsfig{figure=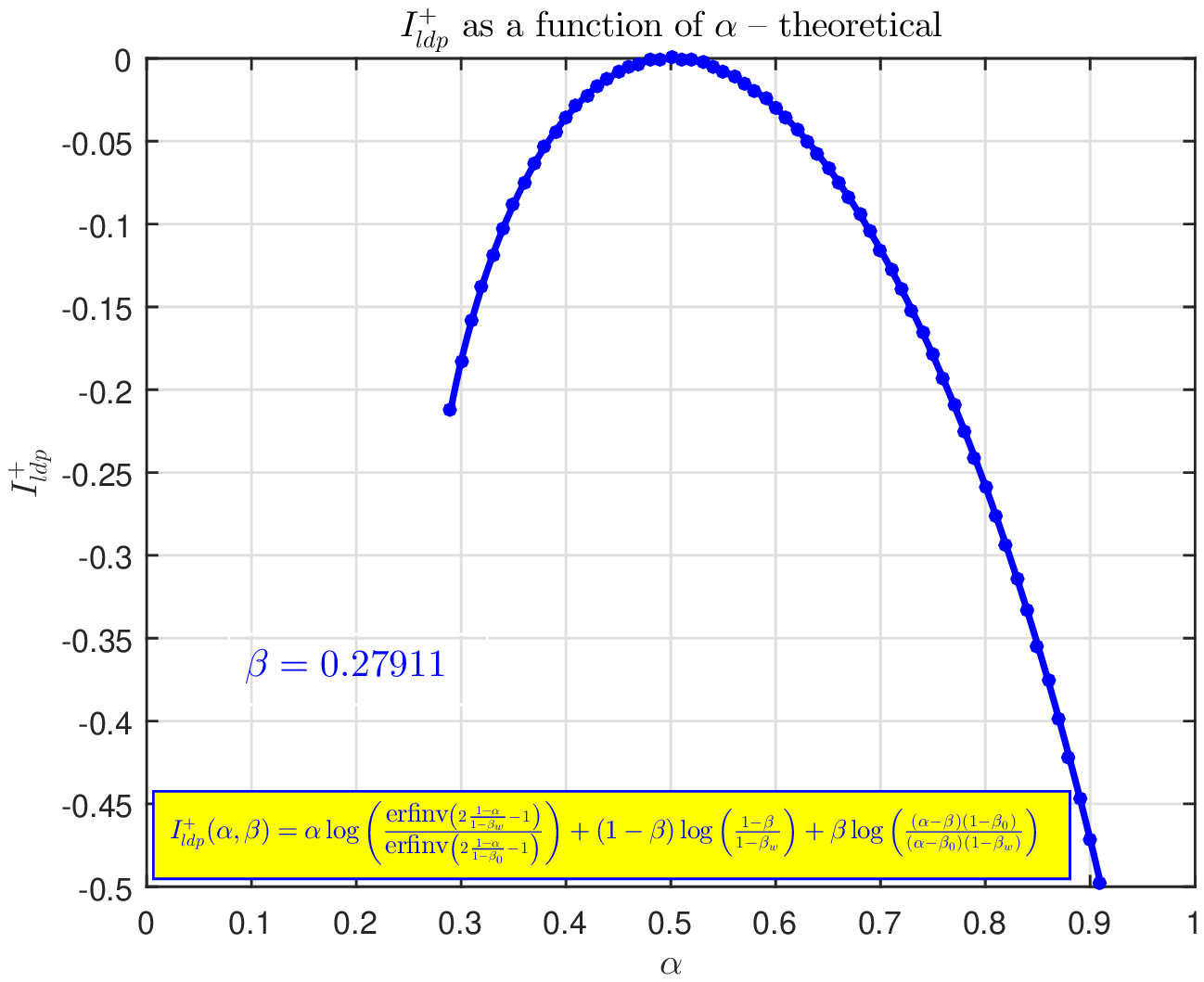,width=9cm,height=7cm}}
\end{minipage}
\begin{minipage}[b]{.5\linewidth}
\centering
\centerline{\epsfig{figure=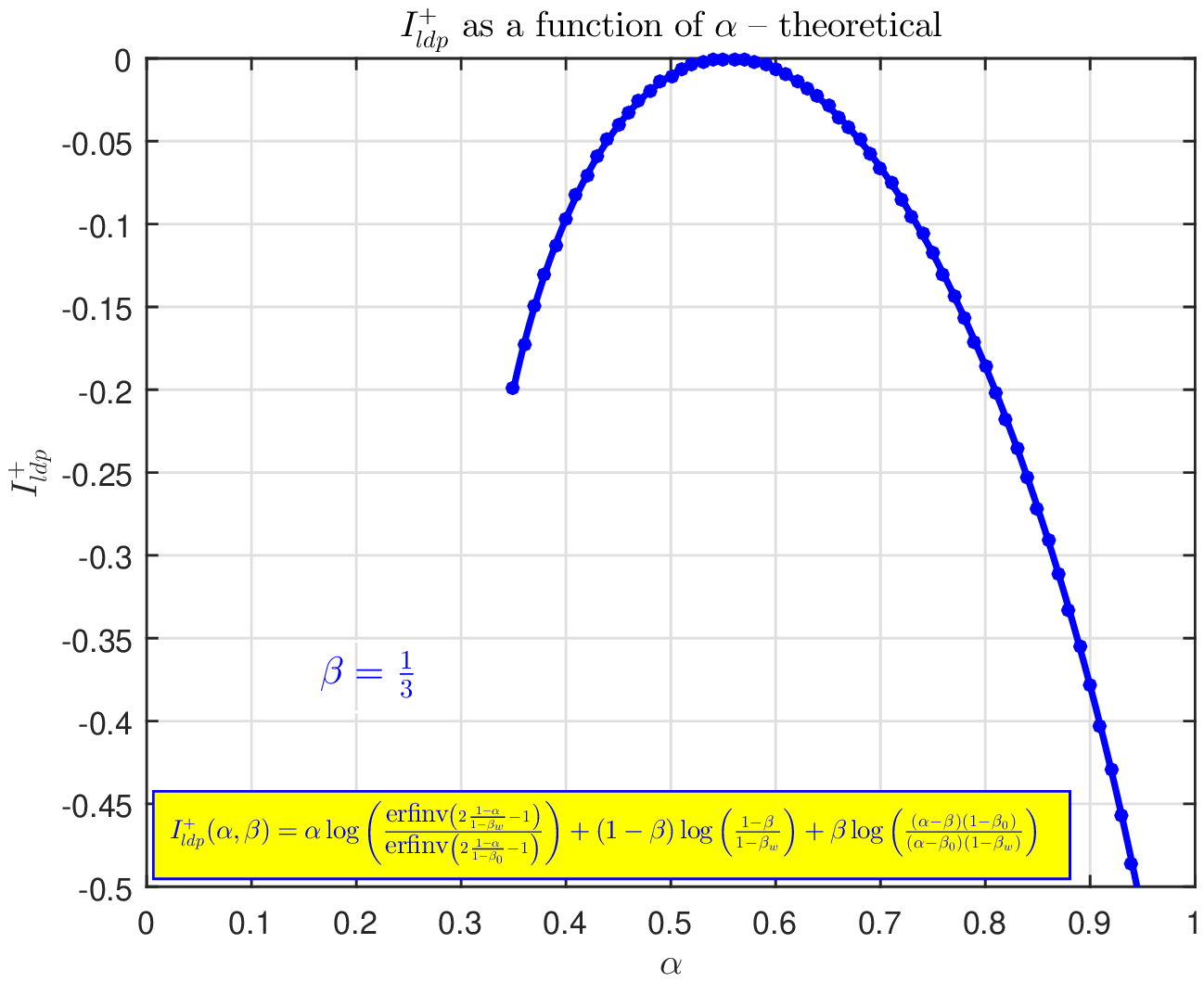,width=9cm,height=7cm}}
\end{minipage}
\caption{$I^+_{ldp}$ as a function of $\alpha$; left -- $\beta=0.27911$; right -- $\beta=\frac{1}{3}$}
\label{fig:l1regldpIerrubnonn}
\end{figure}

\begin{table}[h]
\caption{A collection of values for $\beta_w$, $\beta_0$, $\nu$, $A_0$, $c_3$, $\gamma$, and $I_{ldp}$ in Theorem \ref{thm:ldp3}; $\beta=0.27911$}\vspace{.1in}
\hspace{-0in}\centering
\begin{tabular}{||c||c|c|c|c|c||}\hline\hline
$\alpha$ & $ 0.40 $ & $ 0.45 $ & $ 0.50 $ & $ 0.55 $ & $ 0.60 $ \\ \hline\hline
$\beta_w$& $ 0.1921 $ & $ 0.2336 $ & $ 0.2791 $ & $ 0.3289 $ & $ 0.3832 $ \\ \hline
$\beta_0$& $ 0.0302 $ & $ 0.1646 $ & $ 0.2791 $ & $ 0.3792 $ & $ 0.4687 $ \\ \hline\hline
$\nu$    & $ 0.6516 $ & $ 0.5757 $ & $ 0.5061 $ & $ 0.4415 $ & $ 0.3813 $ \\ \hline
$A_0$    & $ 2.1569 $ & $ 1.4109 $ & $ 1.0000 $ & $ 0.7390 $ & $ 0.5580 $ \\ \hline
$c_3$    & $ -1.0709 $ & $ -0.4710 $ & $ -0.0000 $ & $ 0.4556 $ & $ 0.9560 $ \\ \hline
$\gamma$ & $ 0.1466 $ & $ 0.2377 $ & $ 0.3536 $ & $ 0.5018 $ & $ 0.6941 $ \\ \hline\hline
$I^+_{ldp}$& $ \mathbf{-0.0357} $ & $ \mathbf{-0.0084} $ & $ \mathbf{0.0000} $ & $ \mathbf{-0.0077} $ & $ \mathbf{-0.0299} $ \\ \hline\hline
\end{tabular}
\label{tab:Ildptab1nonn}
\end{table}

\begin{figure}[htb]
\centering
\centerline{\epsfig{figure=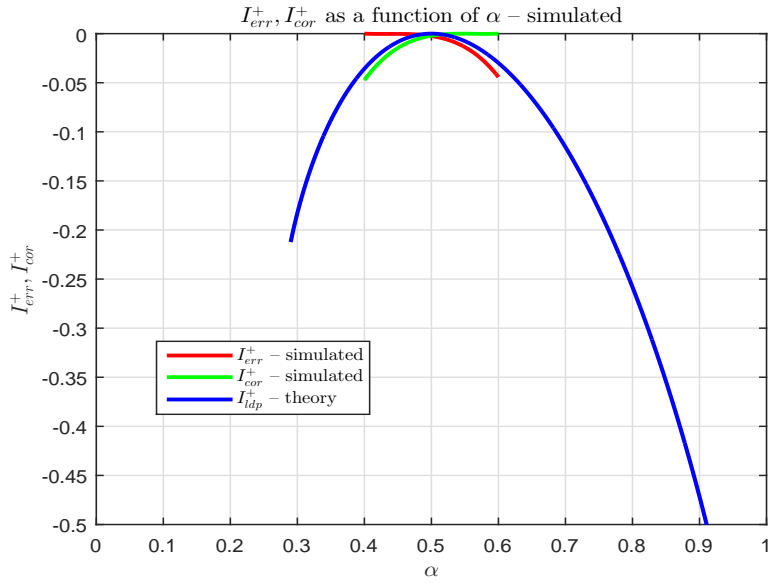,width=11.5cm,height=8cm}}
\caption{Nonnegative $\ell_1$'s weak LDP rate function -- theory and simulation; $\beta=0.27911$}
\label{fig:weakl1LDPthrsimnonn}
\end{figure}

\begin{table}[h]
\caption{$I^+_{err}$, $I^+_{err}$ -- simulated; $I^+_{ldp}$ calculated for $\beta=0.27911$}\vspace{.1in}
\hspace{-0in}\centering
\begin{tabular}{||c||c|c|c|c|c||}\hline\hline
$\alpha$ & $ 0.40 $ & $ 0.45 $ & $ 0.50 $ & $ 0.55 $ & $ 0.60 $ \\ \hline\hline
$k$      & $ 35 $ & $ 56 $ & $ 84 $ & $ 84 $ & $ 42 $ \\ \hline
$m$      & $ 50 $ & $ 90 $ & $ 150 $ & $ 165 $ & $ 90 $ \\ \hline
$n$      & $ 125 $ & $ 200 $ & $ 300 $ & $ 300 $ & $ 150 $ \\ \hline\hline
$I^+_{err}$& $ -0.0000 $ & $ -0.0003 $ & \red{$ \mathbf{-0.0025} $} & \red{$ \mathbf{-0.0138} $} & \red{$ \mathbf{-0.0443} $} \\ \hline\hline
$I^+_{cor}$& \gr{$ \mathbf{-0.0474} $} & \gr{$ \mathbf{-0.0146} $} & \gr{$ \mathbf{-0.0022} $} & $ -0.0001 $ & $ -0.0000 $ \\ \hline\hline
$I^+_{ldp}$& \bl{$ \mathbf{-0.0357} $} & \bl{$ \mathbf{-0.0084} $} & \bl{$ \mathbf{0.0000} $} & \bl{$ \mathbf{-0.0077} $} & \bl{$ \mathbf{-0.0299} $} \\ \hline\hline
\end{tabular}
\label{tab:Ildptab2nonn}
\end{table}

\subsection{High-dimensional geometry approach of \cite{DT,DonohoSigned} -- nonnegative vectors}
\label{sec:donhdgnonn}

In this section we again look at an alternative high-dimensional approach. The approach is popularized in \cite{DT,DonohoSigned} and in its essence is adapting the one from \cite{DonohoPol,DonohoUnsigned} so that it can handle the nonnegative vectors and their phase transitions. Moreover, it is now well known through our own work \cite{StojnicEquiv10} that the nonnegative $\ell_1$'s phase transitions from Theorem \ref{thm:thmweakthrnonn} and the results from \cite{DT,DonohoSigned} are in a perfect mathematical agreement.

Additionally, the results of \cite{DT,DonohoSigned} can follow into the footsteps of \cite{DonohoPol,DonohoUnsigned} and can also be used for the LDP characterizations. Below is a quick sketch how one can align such an analysis with what we presented earlier. As usual, we will first assume that we are given a pair $(\alpha,\beta)$ and, as in Section \ref{sec:donhdg}, we will immediately write the results for both, the upper and the lower LDP regimes, i.e. for $\alpha>\alpha_w$ and for $\alpha<\alpha_w$ (where $\alpha_w$ is such that $\psi^+_\beta(\alpha_w)=\xi^+_{\alpha_w}(\beta)=1$). When put in the LDP frame of Section \ref{sec:hdg} and ultimately \cite{StojnicEquiv10} results of \cite{DT,DonohoSigned} give
\begin{eqnarray}\label{eq:donhdg1nonn}
  \Psi_{net}^{(D,+)}(\alpha,\beta) & =&I^+_{err}(\alpha,\beta)\triangleq\lim_{n\rightarrow\infty}\frac{\log{P^+_{err}}}{n}=\psicom^{(D,+)}-\psiint^{(D,+)}-\psiext^{(D,+)},\alpha>\alpha_w\nonumber \\
  \Psi_{net}^{(D,+)}(\alpha,\beta) & =&I^+_{cor}(\alpha,\beta)\triangleq\lim_{n\rightarrow\infty}\frac{\log{P^+_{err}}}{n}=\psicom^{(D,+)}-\psiint^{(D,+)}-\psiext^{(D,+)},\alpha<\alpha_w,
\end{eqnarray}
where
\begin{eqnarray}
\psicom^{(D,+)} & = & -(\alpha-\beta)\log\left (\frac{\alpha-\beta}{1-\beta}\right )-(1-\alpha)\log\left (\frac{1-\alpha}{1-\beta}\right )\nonumber \\
\psiext^{(D,+)} & = & \min_{y\geq 0} (\alpha y^2 -(1-\alpha)\log(1+\erf(y))+(1-\alpha)\log(2)), \label{eq:donhdg2nonn}
\end{eqnarray}
and
\begin{equation}
\psiint^{(D,+)}=(\alpha-\beta)\left (-\frac{1}{2}\frac{\beta}{\alpha-\beta}s_{\alpha,\beta}^2-\frac{1}{2}\log\left (\frac{2}{\pi}\right )+\log\left (\frac{\alpha s_{\alpha,\beta}}{\alpha-\beta}\right )+\log(2)\right ),\label{eq:intang1nonn}
\end{equation}
where $s_{\alpha,\beta}\geq 0$ is the solution of (\ref{eq:intang3}).
Now if we can show that $\psiint^{(D,+)}=-\psiintnon$ then $\psinet^{(D,+)}=\psinetnon$ and the approach of \cite{DT,DonohoSigned} indeed matches the approach of Section \ref{sec:hdg}. To that end, we follow into the footsteps of \cite{StojnicEquiv10} and set
\begin{equation}
s^+_{\alpha,\beta}=\sqrt{2}\erfinv\left (2\frac{1-\alpha}{1-\beta_0}-1\right ),\label{eq:sgammanonn}
\end{equation}
where as earlier $\beta_0$ is such that $\frac{\alpha-\beta}{\alpha-\beta_0}\xi^+_{\alpha}(\beta_0)=1$. For such a $s^+_{\alpha,\beta}$ (\ref{eq:intang3}) becomes
\begin{equation}
\left (\frac{\alpha-\beta_0}{1-\beta_0}\right )=\frac{\alpha-\beta}{\alpha}\frac{e^{-\left (\erfinv\left (2\frac{1-\alpha}{1-\beta_0}-1\right )\right )^2}}{\sqrt{2\pi}\sqrt{2}\erfinv\left (2\frac{1-\alpha}{1-\beta_0}-1\right )},\label{eq:intang3anonn}
\end{equation}
which is true because of $\frac{\alpha-\beta}{\alpha-\beta_0}\xi^+_{\alpha}(\beta_0)=1$. Now replacing $s^+_{\alpha,\beta}$ from (\ref{eq:sgamma}) in (\ref{eq:intang1nonn}) we obtain
\begin{eqnarray}
\psiint^{(D,+)}&=&(\alpha-\beta)\left (-\frac{1}{2}\frac{\beta}{\alpha-\beta}(s^+_{\alpha,\beta})^2-\frac{1}{2}\log\left (\frac{2}{\pi}\right )+\log\left (\frac{\alpha s^+_{\alpha,\beta}}{\alpha-\beta}\right )+\log(2)\right )
\nonumber \\
&=&(\alpha-\beta)\left (-\frac{\beta}{\alpha-\beta}\left (\erfinv\left (2\frac{1-\alpha}{1-\beta_0}-1\right )\right )^2-\frac{1}{2}\log\left (\frac{2}{\pi}\right )+\log\left (\frac{\sqrt{\frac{2}{\pi}}(1-\beta_0) e^{-\left (\erfinv\left (2\frac{1-\alpha}{1-\beta_0}-1\right )\right )^2}}{\alpha-\beta_0}\right )\right )\nonumber \\
& = & -\alpha \left (\erfinv\left (2\frac{1-\alpha}{1-\beta_0}-1\right )
\right )^2 -(\alpha-\beta)\log\left (\frac{\alpha-\beta_0}{1-\beta_0}\right ).
\label{eq:intang1anonn}
\end{eqnarray}
Connecting (\ref{eq:intang1anonn}) and the second equality in (\ref{eq:hdg12nonn}) then confirms that indeed $\psiint^{(D,+)}=-\psiintnon$ and finally $\psinet^{(D,+)}=\psinetnon$.

\section{Conclusion}
\label{sec:conc}

In this paper we revisited the random under-determined systems of linear equations with sparse solutions. We looked at the classical phase transitions phenomena that appear in these systems if a standard optimization algorithm/thenique called $\ell_1$ minimization is utilized for their solving. We substantially widened the scope of studying and understanding these phenomena by connecting them further to the large deviations properties/principles from the classical probability theory. We first introduced and explained what could be a way of thinking about large deviations when it comes to random linear systems and their dimension. We then continued by providing a series of novel probabilistic mechanisms that turned out to be fairly powerful and enabled us to fully exactly characterize the introduced large deviations concepts. Moreover, the final results turned out to be fairly elegant and in our view match the elegance we achieved in phase transitions characterizations in our earlier works.

We started the presentation by introducing the main ideas while considering the case of the general unknown sparse vectors and then proceeded by adapting them so that they fit the case of the a priori known to be nonnegative vectors. For that case one first modifies the $\ell_1$ optimization to the so-called nonnegative $\ell_1$ and then proceeds with the adaptation of the general methodology that works for the $\ell_1$. For both cases we connected the probabilistic analysis to the one that can be conducted through a high-dimensional geometry approach and showed that one obtains exactly the same results pursuing both of these substantially different mathematical paths. Finally, we presented quite a few numerical results that are in a very solid agreement with all of our theoretical results (we in fact observed a pretty good level of agreement between the theoretical results that are derived assuming an infinite dimensional asymptotic regime and the simulated ones obtained for systems of rather small dimensions of order of few hundreds).

As expected, the design of a theory as powerful and widely applicable as the one that we presented here then leaves a tone of opportunity to continue further and consider various other aspects/extensions of the algorithms/problems at hand. That typically assumes a bit of adjustment of the techniques introduced here and in a few of our earlier works so that they fit those problems as well. However, we view these adjustments as fairly routine tasks and for a few problems that we consider of particular interest we will in a few companion papers present how they can be done and what kind of results they eventually produce.

\begin{singlespace}
\bibliographystyle{plain}
\bibliography{l1regposldpasym1Refs}
\end{singlespace}

\end{document}